\newtheorem{thm}{Theorem}[section]
\newtheorem{prop}[thm]{Proposition}
\newtheorem{defn}[thm]{Definition}
\newtheorem{lemma}[thm]{Lemma}
\newtheorem{cor}[thm]{Corollary}
\newtheorem{remark}[thm]{Remark}
\newtheorem{example}[thm]{Example}
\newcommand{\bmb}{\left( \begin{array}{rr}}
\newcommand{\enm}{\end{array}\right)}
\newcommand{\C}{{\mathbb C}}
\newcommand{\Z}{{\mathbb Z}}
\newcommand{\al}{{\alpha}}
\numberwithin{equation}{section}
\begin{document}
\title{An inhomogeneous Lambda-determinant}
\author{Philippe Di Francesco} 
\address{
Institut de Physique Th\'eorique du Commissariat \`a l'Energie Atomique, 
Unit\'e de Recherche associ\'ee du CNRS,
CEA Saclay/IPhT/Bat 774, F-91191 Gif sur Yvette Cedex, 
FRANCE. e-mail: philippe.di-francesco@cea.fr}

\begin{abstract}
We introduce a multi-parameter generalization of the Lambda-determinant
of Robbins and Rumsey, based on the cluster algebra with coefficients
attached to a $T$-system recurrence. We express the result as a weighted 
sum over alternating sign matrices.
\end{abstract}

\maketitle
\date{\today}
\tableofcontents

\section{Introduction}

The so-called Lambda-determinant was introduced by Robbins and Rumsey \cite{RR}
as a natural generalization of the ordinary determinant, via a one-parameter deformation
of the Dodgson condensation algorithm \cite{DOD} that expresses the determinant
of any $n\times n$ matrix in terms of minors of sizes $n-1$ and $n-2$.
This produces, for each $n\times n$ matrix $A$,
a Laurent polynomial of its entries, involving only monomials with powers of $\pm 1$, coded
by Alternating Sign Matrices (ASM) of same size $n$. Robbins and Rumsey were able to write a compact
formula for the Lambda-determinant of any matrix $A$, as a sum over the ASMs of same size with
explicit coefficients \cite{RR} (see also \cite{Bressoud} for a lively account of the discovery of ASMs).
ASMs are known to be in bijection with configurations
of the Six Vertex (6V) model on a square $n\times n$ grid, with special Domain Wall Boundary Conditions (DWBC)
\cite{KUP}.
The latter are obtained by choosing an orientation of the edges of the underlying square lattice,
such that each vertex of the grid has two incoming and two outgoing adjacent edges. The resulting
six local configurations read as follows:
\begin{equation}\label{sixvertices} \raisebox{-1.cm}{\hbox{\epsfxsize=14.cm \epsfbox{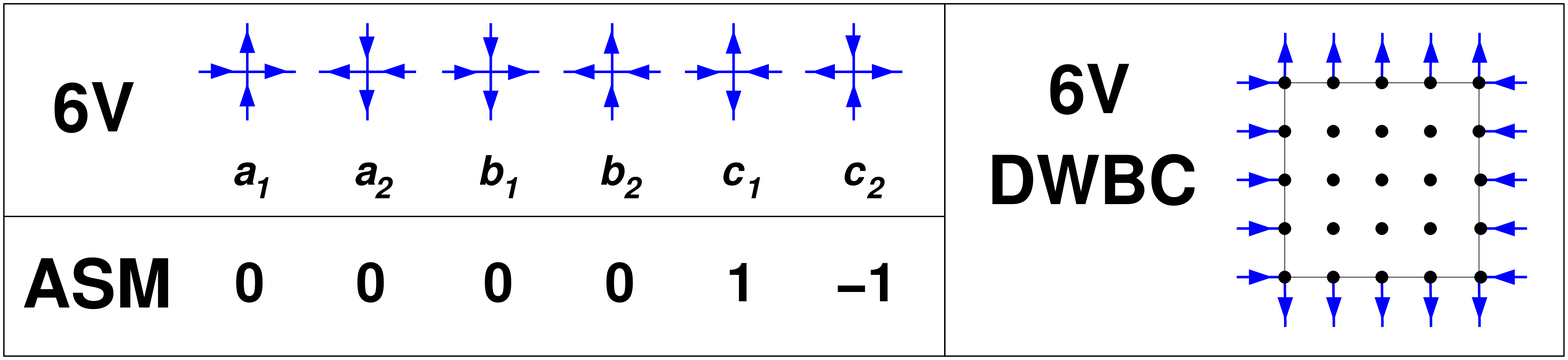}}} 
\end{equation}
The boundary condition (DWBC)
imposes that horizontal external edges point towards the grid, while vertical external edges point out of the grid.
The bijection between the 6V configurations and ASMs goes as follows: 
the $a$ and $b$ type vertices correspond to the entries $0$, while the $c_1$ type vertex 
corresponds to the entry $1$, and the $c_2$ type vertex to the entry $-1$. Conversely, for any $n\times n$ ASM $B$,
there is a unique 6V-DWBC configuration on the $n\times n$ square grid.
For any fixed ASM $B$, we denote by ${\mathcal C}(B)_{i,j}$ the configuration of the vertex $(i,j)$ in the 
corresponding 6V-DWBC model,  with ${\mathcal C}(B)_{i,j}\in \{a_1,a_2,b_1,b_2,c_1,c_2\}$.

In this note, we define an inhomogeneous, multi-parameter generalization of the
Lambda-determinant of an $n\times n$ matrix $A$.
This is done by deforming the Dodgson condensation algorithm \cite{DOD} by use of two sets of parameters
$\lambda_a,\mu_a$, $a\in \Z$.  More precisely:

\begin{defn}\label{onedef}
Let $A=(a_{i,j})_{i,j\in I}$, $i=\{1,2,...,n\}$, and sequences $\lambda=(\lambda_a)_{a\in \Z}$,
$\mu=(\mu_a)_{a\in\Z}$ of fixed parameters.
The generalized Lambda-determinant of $A$, denoted $\vert A\vert_{\lambda,\mu}$ is defined inductively
by the following modified Dodgson condensation algorithm. Let $A_{i_1,...,i_k}^{j_1,..,j_k}$ denote the submatrix of $A$
obtained by erasing rows $i_1,...,i_k$ and columns $j_1,...,j_k$, and $s$ be the shift operator acting on sequences as
$(s\lambda)_a=\lambda_{a+1}$ and $(s^{-1}\lambda)_a=\lambda_{a-1}$ and similarly on $\mu$. We have:
$$\vert A\vert_{\lambda,\mu} =\frac{\mu_0\, \vert A_1^1\vert_{s\lambda,s\mu}\, \vert A_n^n\vert_{s^{-1}\lambda,s^{-1}\mu}
+\lambda_0\, \vert A_1^n\vert_{s^{-1}\lambda,s\mu}\,
\vert A_n^1\vert_{s\lambda,s^{-1}\mu} }{\vert A_{1,n}^{1,n}\vert_{\lambda,\mu}}
$$
This recursion relation on the size of the matrix, together with the initial condition that a $0\times0$ matrix has
generalized Lambda-determinant 1 and a $1\times 1$ matrix $(a)$ has 
generalized Lambda-determinant $a$ determines $\vert A\vert_{\lambda,\mu}$
completely.
\end{defn}

This generalization of the Lambda-determinant is connected to the expressions found in \cite{SPY} 
for solutions of the octahedron recurrence as partition functions of domino tilings of Aztec diamonds.
The main result of the present paper is the following closed formula, which holds whenever the definition makes sense:

\begin{thm}\label{main}
For fixed parameters $\lambda_a,\mu_a$, $a\in \{-n+2,-n+3,...,n-3,n-2\}$ the generalized Lambda-determinant
of a matrix $A=(a_{i,j})_{i,j\in I}$, $I=\{1,2,...,n\}$, is expressed as follows:
\begin{equation}\label{formu}
|A|_{\lambda;\mu}=\sum_{ASM\, B} \, \prod_{i,j=1}^n w_{i,j}(A,B;\lambda,\mu)
\end{equation}
where the sum extends over all the $n\times n$ Alternating Sign Matrices $B=(b_{i,j})_{i,j\in I}$, and the weights 
$w_{i,j}(A,B,\lambda;\mu)$ are defined as:
$$ w_{i,j}(A,B;\lambda,\mu)= (a_{i,j})^{b_{i,j}}\times  \left\{ \begin{matrix}
\lambda_{j-i} & {\rm if} \ {\mathcal C}(B)_{i,j}=a_1 \\ 
\mu_{i+j-n-1} & {\rm if}\  {\mathcal C}(B)_{i,j}=b_1 \\
\big( \lambda_{j-i}+\mu_{i+j-n-1} \big) & {\rm if}\  {\mathcal C}(B)_{i,j}=c_2 \\
1 & {\rm otherwise} 
\end{matrix} \right. \, .$$
\end{thm}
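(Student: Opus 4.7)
The plan is to induct on the matrix size $n$. The cases $n=0,1$ follow directly from Definition \ref{onedef}. For the inductive step, one assumes Theorem \ref{main} for all sizes less than $n$ and rewrites the recursion of Definition \ref{onedef} in the equivalent cleared-denominator form
$$|A|_{\lambda,\mu}\cdot|A_{1,n}^{1,n}|_{\lambda,\mu}=\mu_0\,|A_1^1|_{s\lambda,s\mu}\,|A_n^n|_{s^{-1}\lambda,s^{-1}\mu}+\lambda_0\,|A_1^n|_{s^{-1}\lambda,s\mu}\,|A_n^1|_{s\lambda,s^{-1}\mu}.$$
Invoking the inductive hypothesis on each of the five generalized Lambda-determinants reduces Theorem \ref{main} to a purely combinatorial identity among weighted ASM sums of sizes $n,n-1,n-2$.

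The heart of the argument is to establish a weight-preserving bijection between pairs $(B,B'')$, with $B$ an $n\times n$ ASM and $B''$ an $(n-2)\times(n-2)$ ASM, and tagged triples $(\varepsilon,B',B''')$, where $\varepsilon\in\{\lambda_0,\mu_0\}$ selects one of the two numerator terms and $B',B'''$ are the $(n-1)\times(n-1)$ ASMs appearing in that term. At the level of 6V-DWBC configurations this is the corner-surgery underlying the octahedron/Aztec-diamond picture of \cite{SPY}: two overlapping size-$(n-1)$ configurations glue along their central $(n-2)\times(n-2)$ block to recover the size-$n$ configuration, in two complementary ways corresponding to the two numerator terms. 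A $c_2$-vertex of $B$ is the unique local configuration compatible with \emph{both} gluings, which is precisely what will produce the weight $\lambda_{j-i}+\mu_{i+j-n-1}$ at that vertex after the two branches are summed.

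The verification that weights match under the bijection rests on the shift structure of $\lambda$ and $\mu$. Under the natural relabeling of each $(n-1)$-submatrix, a vertex $(i,j)$ of $B$ that lies inside $A_1^1$ (resp.\ $A_n^n$, $A_1^n$, $A_n^1$) has its diagonal index $j-i$ and antidiagonal index $i+j-n-1$ shifted by $\pm 1$ in the obvious way, and the corresponding $s^{\pm 1}$ shift applied to the parameter sequences exactly compensates this, reproducing the target weight $\lambda_{j-i}$ or $\mu_{i+j-n-1}$ of Theorem \ref{main}. The denominator factor $|A_{1,n}^{1,n}|_{\lambda,\mu}$ then removes the double-counted contributions from the central $(n-2)\times(n-2)$ block on which the two size-$(n-1)$ pieces overlap, so that each interior vertex of $B$ is weighted exactly once.

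The step I expect to be the main obstacle is the careful boundary bookkeeping. The first and last rows and columns of $B$, and in particular the four corner entries $b_{1,1},b_{1,n},b_{n,1},b_{n,n}$, sit on the boundary of the 6V-DWBC grid and must be handled separately, since each of the four $(n-1)\times(n-1)$ submatrices sees only three of these corners. One must show that the global prefactors $\mu_0$ and $\lambda_0$ of the two numerator branches, together with the partial weights of the boundary rows and columns in the smaller ASM sums, assemble correctly into the weights $\lambda_{j-i}$, $\mu_{i+j-n-1}$, or $\lambda_{j-i}+\mu_{i+j-n-1}$ prescribed by Theorem \ref{main} at every boundary vertex, with the $c_2$-weight arising precisely as the superposition of a $\mu_0$-branch contribution and a $\lambda_0$-branch contribution with a common $(B',B''')$ skeleton. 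Once this accounting is in place, the shift identities close the induction.
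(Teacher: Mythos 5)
Your strategy---induction on $n$ via the defining recursion, reducing the theorem to a combinatorial identity among weighted ASM sums of sizes $n$, $n-1$, $n-2$---is genuinely different from the paper's route (the paper never inducts on $n$: it solves the inhomogeneous $T$-system by a transfer-matrix/network representation, applies Lindstr\"om--Gessel--Viennot to get the generalized Lambda-determinant as a partition function of non-intersecting paths, and then maps path configurations $2^m$-to-$1$ onto 6V-DWBC configurations to produce the $(\lambda_{j-i}+\mu_{i+j-n-1})$ weights). Your route is closer in spirit to Robbins and Rumsey's original argument, and it is not unreasonable; but as written it is a plan, not a proof, and its central step is stated in a form that cannot be literally correct. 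A weight-preserving bijection between pairs $(B,B'')$ and tagged triples $(\varepsilon,B',B''')$ would prove the identity term-by-term, but the five quantities involved are Laurent polynomials whose ASM expansions already contain non-monomial factors $(\lambda+\mu)^{\#(-1)}$ and denominators $a_{i,j}^{-1}$ at the $-1$ entries; the Desnanot--Jacobi-type identity holds only after full expansion and cancellation, and there is no clean bijection of index sets. Your own phrasing concedes this when you say the denominator $|A_{1,n}^{1,n}|_{\lambda,\mu}$ ``removes the double-counted contributions'': dividing by a weighted sum over $(n-2)\times(n-2)$ ASMs is not a bijective deletion of an overlap, and making this precise is exactly the hard content of the inductive proof. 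Nothing in the proposal constructs the corner surgery, verifies the mixed shifts $(s^{-1}\lambda,s\mu)$ and $(s\lambda,s^{-1}\mu)$ on the anti-diagonal minors, or carries out the boundary bookkeeping that you yourself identify as the main obstacle.

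Concretely, then: the gap is that the heart of the argument (the correspondence realizing the recursion at the level of weighted ASM sums, including the splitting of each $c_2$ weight $\lambda_{j-i}+\mu_{i+j-n-1}$ between the two numerator branches and the cancellation against the $(n-2)$-sized denominator) is asserted rather than established. If you want to pursue this route you would essentially have to redo the Robbins--Rumsey induction with inhomogeneous parameters, which is a substantial piece of work; the paper sidesteps it entirely by producing an exact non-inductive formula (Theorems \ref{ladet} and \ref{latlam}) and reading off the ASM weights from the local path-to-6V dictionary \eqref{bijsixv} and the tables \eqref{fintable}, \eqref{tablefin}. I would recommend either carrying out the missing combinatorial lemma in full or adopting the paper's network approach.
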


The paper is organized as follows. In Section 2, we recall Robbins and Rumsey's original definition of the Lambda-determinant,
and express it as a solution of the $T$-system/octahedron recurrence with one coefficient. In Section 3, we 
define the generalized Lambda-determinant as a solution of a $T$-system with inhomogeneous coefficients. The latter is
shown to be part of a cluster algebra of infinite rank, and as such to possess the Laurent property. 
Using the T-system relation, we present a non-homogeneous example leading to simple product formulas for the
generalized Lambda-determinant of the matrix with entries $a_{i,j}=1$ for all $i,j$.
Section 4 is devoted
to the actual computation of the generalized Lambda-determinant, by solving the $T$-system with inhomogeneous coefficients.
This is done via a matrix representation of the $T$-system relation that leads to a generalization of the solution of \cite{DFK12}.
In Section 5 the solution is rephrased in terms of networks, namely of paths on directed graphs. The latter are finally
mapped onto configurations of the 6V-DWBC model, leading to the proof of Theorem \ref{main}. Section 6 is devoted
to further properties of the generalized Lambda-determinant, an expression purely in terms of ASMs,
some explicit examples, and the complete solution
of the $T$-system with inhomogeneous coefficients, and finally some concluding remarks.

\medskip
\noindent{\bf Acknowledgments.} 
We thank R. Kedem for many useful discussions.
We also thank the Mathematical Science Research Institute in Berkeley, CA 
and the organizers of the semester ``Cluster Algebras" for hospitality
while this work was completed. This work is partially supported
by the CNRS PICS program.

\section{The classical Lambda-determinant}

\subsection{Desnanot-Jacobi identity and determinant}

Let $I=\{1,2,...,n\}$. For any matrix $M=(m_{i,j})_{i,j\in I}$ we use the same notations as in Def.\ref{onedef}.
For any given $k+1\times k+1$ matrix $M$, we have the celebrated identity:
\begin{equation}\label{desnajac}
\vert M\vert \times \vert M_{1,n}^{1,n}\vert=\vert M_n^n\vert \times \vert M_1^1\vert -
\vert M_1^n\vert \times \vert M_n^1\vert
\end{equation}
This gives rise to the Dodgson condensation algorithm for effective computing of
determinants, as the identity may be used as a closed recursion relation on the size of the matrix,
allowing for computing its determinant from the initial data of determinants of matrices of 
size 0 and 1 (equal respectively to 1
and the single matrix element).

More formally, we may recast the algorithm using the so-called $A_\infty$ $T$-system relation:
\begin{equation}\label{tsys}
T_{i,j,k+1}T_{i,j,k-1}=T_{i,j+1,k}T_{i,j-1,k}-T_{i+1,j,k}T_{i-1,j,k}
\end{equation}
for any $i,j,k\in \Z$ with fixed parity of $i+j+k$. 

Let $A=(a_{i,j})_{i,j\in I}$ be a fixed $n\times n$ matrix. Together with the initial data:
\begin{eqnarray}
T_{\ell,m,0}&=&1\quad \qquad \qquad\qquad (\ell,m\in \Z;\ell+m=n\, {\rm mod}\, 2)\nonumber \\ 
T_{i,j,1}&=&a_{{j-i+n+1\over 2},{i+j+n+1\over 2}} \quad
(i,j\in\Z;i+j=n+1\, {\rm mod}\, 2; |i|+|j|\leq n-1)\, ,\label{initdat}
\end{eqnarray}
the solution of the $T$-system \eqref{tsys} satisfies:
\begin{equation}
T_{0,0,n}=\det(A)
\end{equation}

\subsection{$T$-system with a coefficient and Lambda-determinant}

Robbins and Rumsey have introduced the Lambda-determinant by applying the
Dodgson algorithm to the following modified Desnanot-Jacobi identity. Let $\lambda\in \C^*$
be a fixed parameter and $A=(a_{i,j})_{i,j\in I}$ be a fixed $n\times n$ matrix. 

\begin{defn}\label{lddef}
The Lambda-determinant of $A$, denoted $\vert A\vert_{\lambda}$ is defined as the solution
$\vert A\vert_{\lambda}=T_{0,0,n}$ of the following $T$-system with a coefficient
\begin{equation}\label{coeftsys}
T_{i,j,k+1}T_{i,j,k-1}=T_{i,j+1,k}T_{i,j-1,k}+\lambda \,T_{i+1,j,k}T_{i-1,j,k}
\end{equation}
and subject to the initial conditions \eqref{initdat}.
\end{defn}

Note that for $\lambda=-1$, we recover the usual determinant: $\vert A\vert_{-1}=\det(A)$.

\begin{example} 
The generalized Lambda-determinant of a $3\times 3$ matrix $A$ reads:
\begin{equation*}
\left\vert \begin{matrix} a & b & c \\ d & e & f\\ g & h & k \end{matrix} \right\vert_{\lambda}
= \lambda^3 \, c e g+\lambda^2 \, c d h+\lambda^2 \, b f g 
+ \, a e k+ \lambda \, b d k+\lambda \, a f h+\lambda
(\lambda+1) \, {b d f h \over e}
\end{equation*}
\end{example}

\begin{example}
The Lambda-determinant of the Vandermonde matrix $A=(a_i^{j-1})_{i,j\in I}$ reads
$$ \vert A\vert_{\lambda}=\prod_{1\leq i<j \leq n} \big( a_j+\lambda a_i\big) $$
\end{example}

\subsection{The Robbins-Rumsey formula and Alternating Sign Matrices}

A remarkable property of the definition of the Lambda-determinant, is that it produces a
Laurent polynomial of the matrix elements $a_{i,j}$, which is a sum over monomials
with only powers $\pm 1$. These powers are coded by $n\times n$ so-called 
Alternating Sign Matrices (ASM), namely matrices $B=(b_{i,j})_{i,j\in I}$ with
entries $b_{i,j}\in \{0,1,-1\}$, with non-negative partial row sums:
$\sum_{j=1}^k b_{i,j}\geq 0$, $k=1,2,...,n-1$, $i\in I$; and with row sums equal to one:
$\sum_{j=1}^n b_{i,j}=1$, $i\in I$. More precisely, Robbins and Rumsey found the following
formula for the Lambda-determinant:
\begin{equation}\label{ldRR}
\vert A\vert_{\lambda}=\sum_{ASM\, B} \lambda^{{\rm Inv}(B)} (1+\lambda^{-1})^{\#(-1)_B} \prod_{i,j\in I}a_{i,j}^{b_{i,j}}
\end{equation}
where  $\#(-1)_B$ is the total number of entries in $B$ that are equal to $-1$, and
$${\rm Inv}(B)=\sum_{k<\ell\atop m<p} b_{k,p}b_{\ell,m} $$
is the generalized inversion number of $B$.

\begin{remark}
Note that for $\lambda=-1$ the sum truncates to only the contribution of ASMs with no $-1$ entry, which
are the permutation matrices $P$, and ${\rm Inv}(P)$ is the usual inversion number of the permutation matrix $P$,
so that \eqref{ldRR} reduces to the usual determinant formula.
\end{remark}

\begin{remark}
From eq.\eqref{ldRR}, we see that the Lambda-determinant is well defined for any matrix $A$ with non-vanishing
entries. More precisely, as the matrix elements $b_{i,j}=-1$ of ASMs may only occur for $2\leq i,j\leq n-1$,
the Lambda-determinant is well-defined for any $n\times n$ matrix $A$ such that $a_{i,j}\neq 0$ for all $2\leq i,j\leq n-1$.
\end{remark}

\section{An inhomogeneous generalization of the Lambda-determinant}

\subsection{$T$-system and Cluster Algebra}

The unrestricted $A_\infty$ $T$-system \eqref{tsys} is known to be part of an infinite rank cluster algebra
with coefficients \cite{FZI}.
The T-system in this form (with $T_{i,j,k}$
defined only for $i+j+k=1$ mod 2), was shown to be a particular mutation in an infinite rank cluster algebra 
with coefficients,
with cluster variables of the form $(T_{i,j,k-1},T_{i',j',k})_{i,j,i',j'\in \Z}$ and non-vanishing fixed 
coefficients $(\lambda_i)_{i\in \Z}$ \cite{DFK08}.
It reads:
\begin{equation}
T_{i,j,k+1}T_{i,j,k-1}=T_{i,j+1,k}T_{i,j-1,k}+\lambda_i \,T_{i+1,j,k}T_{i-1,j,k}
\end{equation}
As a consequence of the cluster algebra Laurent property \cite{FZI}, the solution $T_{i,j,k}$ of this equation
may be expressed as a Laurent polynomial of any choice of admissible initial values, with coefficients
in $\Z[(\lambda_i)_{i\in\Z}]$. The ``coefficients" $\lambda_i$ are inhomogeneous, as they now depend
on the value of $i$. In particular, choosing homogeneous coefficients $\lambda_i=\lambda$ independent of $i$,
we recover the Laurent property observed for the Lambda-determinant of Robbins and Rumsey.

As it turns out, we may consider an even more general inhomogeneous coefficient $T$-system equation:
\begin{equation}\label{inhomtsys}
T_{i,j,k+1}T_{i,j,k-1}=\mu_j\, T_{i,j+1,k}T_{i,j-1,k}+\lambda_i \,T_{i+1,j,k}T_{i-1,j,k}
\end{equation}
for any fixed non-vanishing coefficients $(\lambda_i)_{i\in\Z}$ and $(\mu_j)_{j\in\Z}$. We have the following

\begin{thm}\label{inhomclust}
The equation \eqref{inhomtsys} is a particular mutation in a cluster algebra $\mathcal A$ with coefficents. The initial seed
is given by the cluster $X_0=\big((T_{i,j,0})_{i,j\in\Z},(T_{i',j',1})_{i',j'\in \Z}\big)$ 
with $i+j=1$ mod 2 and $i'+j'=0$ mod 2 as usual,
and the coefficients $\big((\lambda_a)_{a\in\Z},(\mu_b)_{b\in \Z}\big)$, and the
extended exchange matrix ${\tilde B}_0$ with the following infinite
block form: ${\tilde B}_0=\begin{pmatrix} 0 & B\\ -B & 0 \\ L_0 & L_1 \\ M_0 & M_1 \end{pmatrix}$, where:
\begin{eqnarray*}(B)_{i,j;i',j'}&=&\delta_{i,i'}\delta_{|j-j'|,1}-\delta_{j,j'}\delta_{|i-i'|,1}\\
(L_0)_{a;i,j}&=&\delta_{i,a} \qquad \qquad \qquad (L_1)_{a;i',j'}=-\delta_{i',a}\\
(M_0)_{b;i,j}&=&-\delta_{j,b} \qquad \qquad \qquad (M_1)_{b;i',j'}= \delta_{j',b}
\end{eqnarray*}
\end{thm}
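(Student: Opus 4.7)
The plan is to verify directly that the exchange relation produced by mutating the initial seed $X_0$ at a column of $\tilde B_0$ indexed by a layer-$0$ variable $T_{i,j,0}$ reproduces the inhomogeneous $T$-system relation \eqref{inhomtsys}. Once this is established, iterated mutation generates the full family $(T_{i,j,k})_{k\in\Z}$ as cluster variables, and the Laurent property for cluster algebras with coefficients \cite{FZI} then expresses each $T_{i,j,k}$ as a Laurent polynomial in the initial seed with coefficients in $\Z[(\lambda_a)_{a\in\Z},(\mu_b)_{b\in\Z}]$.

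First I would fix indexing conventions: rows of $\tilde B_0$ are indexed by the disjoint union of the layer-$0$ mutable variables (pairs $(i,j)$ with $i+j$ odd), the layer-$1$ mutable variables (pairs $(i',j')$ with $i'+j'$ even), and the two families of frozen coefficients $(\lambda_a)_{a\in\Z}$ and $(\mu_b)_{b\in\Z}$; columns are indexed by the first two factors alone. The principal part $\begin{pmatrix} 0 & B \\ -B & 0\end{pmatrix}$ is manifestly skew-symmetric, so $\tilde B_0$ is a legitimate extended exchange matrix.

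The heart of the argument is to read off one column at a time. For a column $(i,j)$ in the left block (so $i+j$ odd), the block definitions give the following nonzero entries: a $+1$ at each of the rows $(i\pm 1, j)$ and a $-1$ at each of the rows $(i, j\pm 1)$ of the layer-$1$ part (coming from $-B$); a $+1$ at row $\lambda_i$ (from $L_0$); and a $-1$ at row $\mu_j$ (from $M_0$). Applying the cluster mutation formula
$$x_v\, x_v' \;=\; \prod_w x_w^{[b_{wv}]_+} \;+\; \prod_w x_w^{[-b_{wv}]_+},$$
and identifying the mutated variable $T_{i,j,0}'$ with $T_{i,j,2}$, one obtains
$$T_{i,j,0}\, T_{i,j,2} \;=\; \lambda_i\, T_{i-1,j,1}\,T_{i+1,j,1} \;+\; \mu_j\, T_{i,j-1,1}\,T_{i,j+1,1},$$
which is precisely \eqref{inhomtsys} at $k=1$. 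A parallel computation on a right-block column $(i',j')$ yields the analogous relation at $k=0$ (introducing $T_{i',j',-1}$), and a straightforward induction on $|k|$ then recovers the relation at every level.

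The only real obstacle is the combinatorial bookkeeping of signs: the entries of $L_0, L_1, M_0, M_1$ must be arranged so that $\lambda_i$ ends up paired with the ``vertical'' neighbors $T_{i\pm 1, j, \cdot}$ and $\mu_j$ with the ``horizontal'' neighbors $T_{i, j\pm 1, \cdot}$ in every exchange relation, and so that the signs flip consistently when passing from a left-block mutation to a right-block mutation. The skew-symmetry of the principal part and the compatibility of the two families of mutations follow immediately once these signs are fixed. Apart from tracking two independent coefficient families instead of one, the argument is a direct extension of the homogeneous case treated in \cite{DFK08}.
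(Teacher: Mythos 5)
Your reading of the columns of $\tilde B_0$ is correct, and the derivation of the exchange relation
$$T_{i,j,0}\,T_{i,j,2}=\lambda_i\,T_{i-1,j,1}T_{i+1,j,1}+\mu_j\,T_{i,j-1,1}T_{i,j+1,1}$$
from a left-block column matches what the paper does. But the step you dispose of as ``straightforward induction on $|k|$'' is precisely where the substance of the theorem lies, and your proposal leaves it unproved. A mutation at the vertex $(i,j)$ does not merely flip the arrows incident to $(i,j)$: it also creates composite arrows $u\to w$ for every length-$2$ path $u\to(i,j)\to w$, i.e.\ new arrows among the four layer-$1$ neighbours $(i\pm1,j)$, $(i,j\pm1)$ and between the frozen vertices $\lambda_i$, $\mu_j$ and those neighbours. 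After the compound mutation at \emph{all} layer-$0$ vertices, the induction only goes through if every one of these newly created arrows cancels against the contribution of another mutated layer-$0$ vertex (each layer-$1$ vertex has four mutated neighbours, each frozen vertex infinitely many, and the contributions must pair off with opposite signs), so that the resulting extended exchange matrix is exactly $\tilde B_1=-\tilde B_0$. This is the claim the paper actually verifies (quadrant by quadrant, in the quiver pictures), and it is what guarantees that the second round of mutations, at the layer-$1$ vertices, again produces T-system relations rather than relations polluted by spurious arrows. Asserting that ``the compatibility of the two families of mutations follows immediately once these signs are fixed'' skips this computation entirely.

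Two smaller points you should make explicit rather than leave implicit. First, the compound mutation is only well defined because the layer-$0$ vertices are pairwise non-adjacent in $\tilde Q_0$ (the diagonal blocks of the principal part vanish), so the individual mutations commute and each one sees the unmodified column of $\tilde B_0$; this is also why all the $k=1$ exchange relations come out simultaneously correct. Second, once $\tilde B_1=-\tilde B_0$ is established, the relation at the next level follows because negating a column merely swaps the two monomials in the exchange relation, and then $\tilde B_2=\tilde B_0$ closes the induction with period two; stating this makes the inductive structure honest. With the cancellation argument supplied, your proof coincides with the paper's.
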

\begin{proof}
We must show that the cluster $X_k=\big((T_{i,j,k})_{i,j\in\Z},(T_{i',j',k+1})_{i',j'\in \Z}\big)$, where $T_{i,j,k}$
is the solution of \eqref{inhomtsys} with prescribed values of $\big((T_{i,j,0})_{i,j\in\Z},(T_{i',j',1})_{i',j'\in \Z}\big)$, 
is a cluster in the cluster algebra $\mathcal A$. First let us represent the quiver ${\tilde Q}_0$ coded by the extended exchange
matrix ${\tilde B}_0$. We represent by a filled $\bullet$  (resp. empty $\circ$) circle the vertices $(i,j)$ such that $i+j=1$ mod 2 
(resp. $(i',j')$ such that $i'+j'=0$ mod 2),
and by a cross $\otimes$ (resp. square $\square$) the vertices $(a)$ indexing $\lambda_a$ (resp. (b) indexing $\mu_b$) 
of ${\tilde Q}_0$. We have the following local structure,
around respectively vertices $(i,j)$ $\bullet$ and $(i',j')$ $\circ$:
$$ \raisebox{-1.cm}{\hbox{\epsfxsize=11.cm \epsfbox{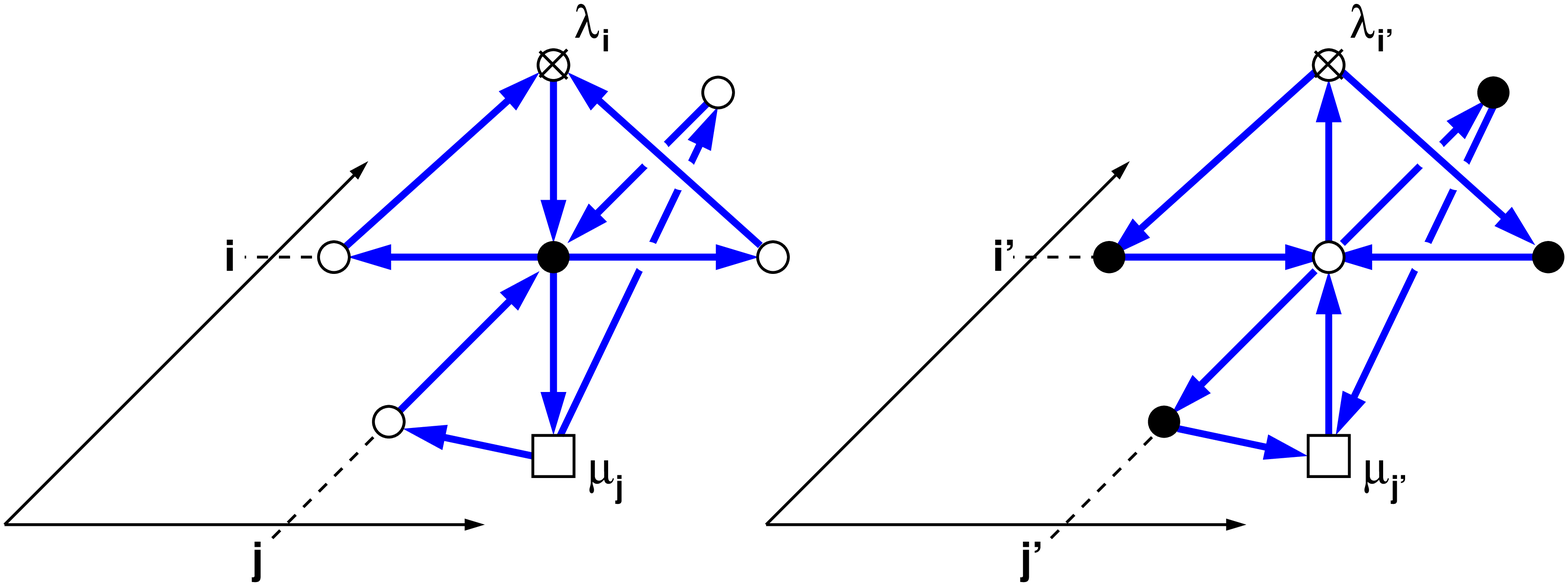}}} $$
Note that the vertices $(a=i)$ indexing $\lambda_i$ are connected to all $(i,j)$, $j\in \Z$
while vertices $(b=j)$ indexing $\mu_j$ are connected to all $(i,j)$, $i\in \Z$, with edges of alternating orientations.
We may reach the cluster $X_1$ from the initial cluster $X_0$ by a compound mutation, obtained by
mutating all the vertices $\bullet$. The resulting quiver ${\tilde Q}_1$ is identical to ${\tilde Q}_0$
with all edge orientations flipped, namely ${\tilde B}_1= -{\tilde B}_0$, as illustrated below
for mutations in a quadrant around a $\bullet$ vertex:
$$ \raisebox{-1.cm}{\hbox{\epsfxsize=16.cm \epsfbox{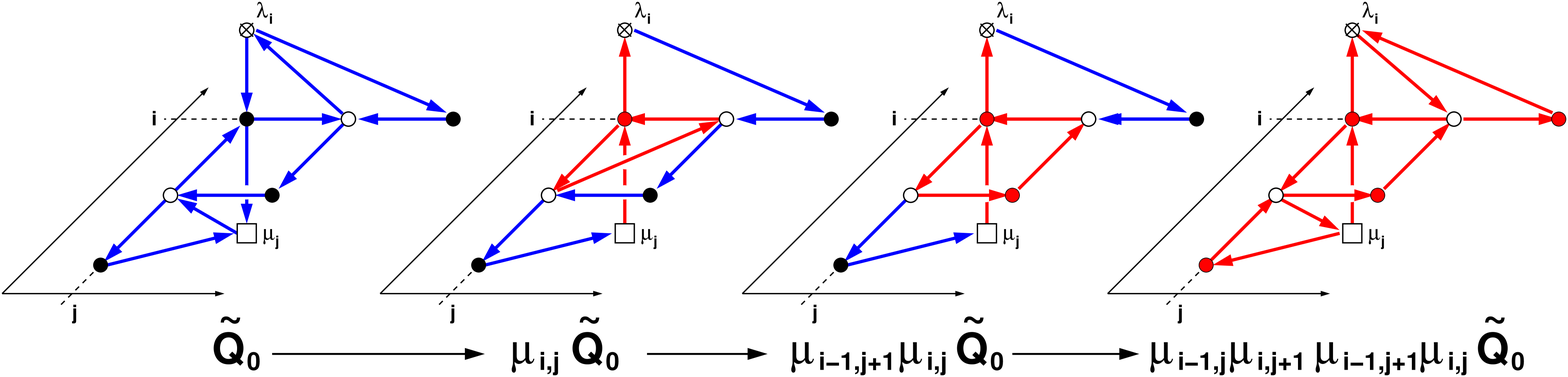}}}  $$
(Recall that a mutation at a vertex $v$ of a quiver flips the orientations of all incident edges, creates ``shortcut"
edges $u\to w$ for any length $2$ path $u\to v\to w$ before mutation, and any 2-loop $v\to w\to v$ thus
created must be eliminated.)

The cluster $X_2$ is then reached by mutating all vertices $\circ$, and has the exchange matrix
${\tilde B}_2={\tilde B}_0$, etc.
\end{proof}

As a consequence of the theorem, we have the Laurent property: the solution $T_{i,j,k}$
of the system \eqref{inhomtsys} for prescribed values of 
$\big((T_{i,j,0})_{i,j\in\Z},(T_{i',j',1})_{i',j'\in \Z}\big)$, is a Laurent polynomial of these values,
with coefficients in $\Z[(\lambda_a)_{a\in\Z},(\mu_b)_{b\in \Z}]$. 

\subsection{Generalized Lambda-determinant: definition and properties}

In view of the results of previous section, we may recast the Definition \ref{onedef} into the following:
\begin{lemma}
Given a square matrix $A=(a_{i,j})_{i,j\in I}$, and some parameters $\lambda_{a},\mu_a$,
$a\in J=\{-(n-2),-(n-3),...,-1,0,1,...,n-3,n-2\}$,
the inhomogeneous Lambda-determinant of $A$ is the solution $T_{0,0,n}$
of the inhomogeneous $T$-system \eqref{inhomtsys}, with initial conditions \eqref{initdat}.
\end{lemma}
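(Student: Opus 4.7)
The statement is that two recursive definitions of $|A|_{\lambda,\mu}$ agree: the modified Dodgson recursion of Definition \ref{onedef}, which recurses on the matrix size $n$, and the specialization at $(i,j,k)=(0,0,n)$ of the $T$-system \eqref{inhomtsys} with initial data \eqref{initdat}, which recurses on $k$. The plan is to prove equality by induction on $n$.

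The base cases $n=0$ and $n=1$ follow directly from the initial data: \eqref{initdat} gives $T_{0,0,0}=1$ and $T_{0,0,1}=a_{1,1}$, matching the initial conditions $|(\,)|=1$ and $|(a)|=a$ stipulated in Definition \ref{onedef}.

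For the inductive step I would apply the $T$-system relation \eqref{inhomtsys} at the center $(0,0,n-1)$, obtaining
\[T_{0,0,n}\,T_{0,0,n-2}=\mu_0\,T_{0,1,n-1}T_{0,-1,n-1}+\lambda_0\,T_{1,0,n-1}T_{-1,0,n-1}.\]
The heart of the argument is to identify each of the five $T$-values above with the corresponding inhomogeneous Lambda-determinant of a submatrix of $A$: the central value $T_{0,0,n-2}$ with $|A_{1,n}^{1,n}|_{\lambda,\mu}$, and the four level-$(n-1)$ values with the four corner Lambda-determinants $|A_1^1|,|A_n^n|,|A_1^n|,|A_n^1|$ appearing in Definition \ref{onedef}, each with its prescribed shift of the parameter sequences. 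Once these five identifications are in hand, the displayed equation is exactly the Dodgson recursion of Definition \ref{onedef}, and the induction closes with $T_{0,0,n}=|A|_{\lambda,\mu}$.

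The identifications themselves are proved by invoking the inductive hypothesis on a re-based $T$-system: each $T_{i',j',n-1}$ can be viewed as the value at the top of a smaller $T$-system of size $n-1$, whose initial data is read off the corresponding corner submatrix of $A$ and whose coefficient sequences are shifts of $(\lambda,\mu)$ induced by the coordinate translation $(i,j)\mapsto(i+i',j+j')$. The main obstacle, really the only non-trivial content, is the bookkeeping required to verify that the shift of parameters assigned to each re-based sub-$T$-system matches the shifts $(s^{\pm 1}\lambda,s^{\pm 1}\mu)$ prescribed by Definition \ref{onedef} on the four corner submatrices; this is a direct computation with the explicit indexing in \eqref{initdat}, tracking how the row and column labels $((j-i+n+1)/2,(i+j+n+1)/2)$ transform under the relevant coordinate shifts.
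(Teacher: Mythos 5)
Your overall plan is the right one, and in fact the paper itself supplies no argument for this lemma (it is introduced as a direct ``recasting'' of Definition \ref{onedef}), so an induction on $n$ of the kind you describe is exactly what is needed. The skeleton is sound: the base cases $n=0,1$ are correct, the relation \eqref{inhomtsys} applied at $(0,0,n-1)$ produces the five-term identity you display, the light cones of the four points $(\pm 1,0,n-1)$, $(0,\pm 1,n-1)$ stay inside the region where \eqref{initdat} assigns data, and the translated system $\tilde T_{i',j',k}=T_{i_0+i',j_0+j',k}$ is again a $T$-system of the form \eqref{inhomtsys} with coefficients $(s^{i_0}\lambda,\,s^{j_0}\mu)$.

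The step you defer as ``direct bookkeeping'' is, however, the entire content of the lemma, and if you carry it out as you have set it up it does not close. By \eqref{initdat} the entry $a_{r,c}$ sits at $i=c-r$, $j=r+c-n-1$. For $A_1^1$ (rows and columns $2,\dots,n$, a system of size $n-1$ based at $(i_0,j_0)=(0,1)$) the sub-entry $(r',c')=(r-1,c-1)$ has $i'=c'-r'=i$ and $j'=r'+c'-(n-1)-1=j-1$, so the re-based coefficients are $(\lambda,\,s\mu)$ --- \emph{not} $(s\lambda,\,s\mu)$ as written in Definition \ref{onedef}. The same computation gives $(\lambda,\,s^{-1}\mu)$ for $A_n^n$, $(s^{-1}\lambda,\,\mu)$ for $A_1^n$ and $(s\lambda,\,\mu)$ for $A_n^1$: translation in $i$ shifts only $\lambda$ and translation in $j$ shifts only $\mu$, exactly as your own re-basing observation predicts. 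One can check that these shifts, and not the ones printed in Definition \ref{onedef}, are what the $T$-system and the paper's $3\times 3$ example actually produce: for instance the monomial $cdh$ appears there with coefficient $\lambda_0\mu_0\lambda_1$, whereas the printed definition yields $\lambda_0\mu_1\lambda_1$ (the $\mu_0\,dh$ term of $\vert A_1^3\vert$ becomes $\mu_1\,dh$ under the prescribed shift $s\mu$). So Definition \ref{onedef} as printed appears to carry a typo, and you cannot simply assert that the translation-induced shifts ``match'' it; you must actually compute them, state the shifts obtained above, and either prove the lemma for the so-corrected Dodgson recursion or record the discrepancy explicitly. As it stands, the asserted matching is the one step of your argument that is false.
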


As a consequence of Theorem \ref{inhomclust}, we know that $|A|_{\lambda;\mu}$ is a Laurent polynomial of the entries
$a_{i,j}$ of $A$, with coefficients in $\Z[(\lambda_a)_{a\in J},(\mu_a)_{a\in J}]$.

\begin{example} 
The generalized Lambda-determinant of a $3\times 3$ matrix $A$ reads:
\begin{eqnarray*}
\left\vert \begin{matrix} a & b & c \\ d & e & f\\ g & h & k \end{matrix} \right\vert_{\lambda,\mu}
&=& \lambda_1 \lambda_0\lambda_{-1} \, c e g+\lambda_0\mu_0\lambda_1 \, c d h+\lambda_0\mu_0\lambda_{-1} \, b f g \\
&&+\mu_1\mu_0\mu_{-1} \, a e k+ \lambda_0\mu_0 \mu_1 \, b d k+\lambda_0\mu_0 \mu_{-1} \, a f h+\lambda_0\mu_0 
(\lambda_0+\mu_0) \, {b d f h \over e}
\end{eqnarray*}
\end{example}

\begin{example}\label{maxvan}
Let $\lambda_a=\lambda$ and $\mu_a=\mu$, for all $a$, and $A$ be the Vandermonde matrix
$A=(a_i^{j-1})_{i,j\in I}$. Then we have:
$$ |A|_{\lambda;\mu}=\prod_{1\leq i<j\leq n} \big( \lambda a_i+\mu a_j\big)  $$
\end{example}

The generalized Lambda-determinant reduces to the original Robbins-Rumsey Lambda-determinant
for the choice of parameters $\lambda_a=\lambda$ and $\mu_a=1$ for all $a$.

The generalized Lambda-determinant satisfies a number of properties, inherited
from the symmetries of the $T$-system. We have the following
\begin{prop}\label{apropos}
Let $\sigma,\tau$ be the following transformations of the matrix $A=(a_{i,j})_{i,j\in I}$,
corresponding respectively to quarter-turn rotation, and diagonal reflection:
\begin{equation*}
\sigma(A)_{i,j}=a_{n+1-j,i} \qquad \qquad \tau(A)_{i,j}=a_{j,i}
\end{equation*}
Then with the following action on parameters:
\begin{equation}\label{lamuchoice}
\left\{ 
\begin{matrix} \sigma(\lambda)_{a}=\mu_{-a} \\ 
\sigma(\mu)_{a}=\lambda_a
\end{matrix} \right. 
\quad {\rm and} \quad  \left\{ \begin{matrix} \tau(\lambda)_{a}=\lambda_{-a}\\ \tau(\mu)_a=\mu_a 
\end{matrix} \right. 
\end{equation}
we have: $\vert \varphi(A)\vert_{\varphi(\lambda),\varphi(\mu)}= \vert A\vert_{\lambda,\mu}$
for $\varphi=\sigma,\tau$.
\end{prop}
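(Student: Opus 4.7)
The plan is to realize each of the two matrix symmetries $\varphi\in\{\sigma,\tau\}$ as an index symmetry of the inhomogeneous T-system \eqref{inhomtsys} itself, and then invoke the uniqueness of the solution determined inductively by \eqref{initdat} to conclude that the same value $T_{0,0,n}$ is produced on both sides. Concretely, for each $\varphi$ I would exhibit a $\Z^3$-affine map $\phi_\varphi$ fixing the point $(0,0,n)$ and preserving the parity constraint $i+j+k\equiv n+1 \pmod 2$, such that the function $S_{i,j,k}:=T_{\phi_\varphi(i,j,k)}$ (built from the solution $T$ of the original system) solves the T-system with coefficients $(\varphi(\lambda),\varphi(\mu))$ and with the initial data coming from $\varphi(A)$ via \eqref{initdat}. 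Then both $S$ and the solution defining $|\varphi(A)|_{\varphi(\lambda),\varphi(\mu)}$ satisfy the same Cauchy problem, hence agree everywhere, and in particular at $(0,0,n)$.

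For the transposition $\tau$ I would take $\phi_\tau(i,j,k)=(-i,j,k)$. Substituting into \eqref{inhomtsys} immediately shows that $S_{i,j,k}=T_{-i,j,k}$ obeys the same recursion with $\lambda_i$ replaced by $\lambda_{-i}$ and $\mu_j$ unchanged, which is exactly $(\tau(\lambda),\tau(\mu))$. A direct substitution into \eqref{initdat} gives
\[
T_{-i,j,1}=a_{(i+j+n+1)/2,\,(j-i+n+1)/2}=\tau(A)_{(j-i+n+1)/2,\,(i+j+n+1)/2},
\]
and the $k=0$ data (all $1$'s) are trivially invariant. For the quarter-turn $\sigma$ I would take $\phi_\sigma(i,j,k)=(j,-i,k)$; plugging into \eqref{inhomtsys} one finds $S_{i,j,k}=T_{j,-i,k}$ satisfies the T-system with $\lambda_i$ replaced by $\mu_{-i}$ and $\mu_j$ replaced by $\lambda_j$, matching $(\sigma(\lambda),\sigma(\mu))$. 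The $k=1$ check reads
\[
T_{j,-i,1}=a_{(n+1-i-j)/2,\,(j-i+n+1)/2}=\sigma(A)_{(j-i+n+1)/2,\,(i+j+n+1)/2},
\]
which is precisely the correct initial datum for $\sigma(A)$.

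There is no real obstacle: the whole argument reduces to tracking how the two $\Z$-gradings of the T-system by $i$ and $j$ (which govern the $\lambda_i$ and $\mu_j$ dependence) are shuffled by the index maps $(i,j)\mapsto(-i,j)$ and $(i,j)\mapsto(j,-i)$. The only bookkeeping to be done carefully is on the initial conditions \eqref{initdat}, whose change of variables $((j-i+n+1)/2,\,(i+j+n+1)/2)$ relating $(i,j)$ to row/column indices of $A$ must be transported consistently through $\phi_\varphi$; this is a one-line check in each case, as displayed above. The parity constraint and the triangular region $|i|+|j|\le n-1$ supporting the $k=1$ initial data are manifestly preserved by both $\phi_\tau$ and $\phi_\sigma$, and both maps fix $(0,0,n)$, completing the argument.
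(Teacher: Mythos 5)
Your proposal is correct and follows essentially the same route as the paper: the paper likewise realizes $\tau$ and $\sigma$ via the index maps $(i,j,k)\mapsto(-i,j,k)$ and $(i,j,k)\mapsto(j,-i,k)$, checks that the initial data \eqref{initdat} transform into those of $\varphi(A)$, and concludes $T^\varphi_{i,j,k}=T_{-i,j,k}$ (resp. $T_{j,-i,k}$) by uniqueness of the solution of the T-system with the transformed coefficients. Your bookkeeping of the coefficient relabelings and of the row/column change of variables agrees with the paper's.
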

\begin{proof}
To compute $\vert \varphi(A)\vert_{\varphi(\lambda),\varphi(\mu)}$, we use the following $T$-system relation 
$$T_{i,j,k+1}^\varphi T_{i,j,k-1}^\varphi=\mu_j^\varphi\, 
T_{i,j+1,k}^\varphi T_{i,j-1,k}^\varphi+\lambda_i^\varphi \,T_{i+1,j,k}^\varphi T_{i-1,j,k}^\varphi$$
with $\lambda_i^\varphi=\varphi(\lambda)_i$ and $\mu_j^\varphi=\varphi(\mu)_j$, together with the
initial data 
$$T_{i,j,0}^\varphi=1 \quad (i+j=n\, {\rm mod}\, 2)\qquad T_{i,j,1}^\varphi=\varphi(A)_{{j-i+n+1\over 2},{i+j+n+1\over 2}}
\quad (i+j=n+1\, {\rm mod}\, 2) $$
We wish to compare the solution $T_{0,0,n}^\varphi$ with that, $T_{0,0,n}$, 
of the $T$-system \eqref{inhomtsys}
subject to the initial conditions \eqref{initdat}.
For $\varphi=\sigma,\tau$, we have respectively:
\begin{eqnarray*}
T^\sigma_{i,j,1}&=&\sigma(A)_{{j-i+n+1\over 2},{i+j+n+1\over 2}}=a_{{-i-j+n+1\over 2},{-i+j+n+1\over 2}}=T_{j,-i,1} \\
T^\tau_{i,j,1}&=&\tau(A)_{{j-i+n+1\over 2},{i+j+n+1\over 2}}=a_{{i+j+n+1\over 2},{-i+j+n+1\over 2}}=T_{-i,j,1}
\end{eqnarray*}
From the obvious symmetries of the $T$-system, and with the respective choice of parameters
\eqref{lamuchoice}, we deduce that
$$T_{i,j,k}^\sigma=T_{j,-i,k} \qquad {\rm and} \qquad T_{i,j,k}^\tau=T_{-i,j,k} $$
for all $i,j,k$ such that $i+j+k=n$ mod 2. The proposition follows for $i=j=0$ and $k=n$.
\end{proof}

\subsection{A non-homogeneous example}
\label{secq}

In this section we study a non-trivial example of an inhomogeneous generalized Lambda-determinant
that may be of interest as a statistical model. We choose the matrix $A=(a_{i,j})_{i,j\in I}$ with entries
\begin{equation}\label{one}a_{i,j}=1\qquad (i,j\in I)\end{equation}
We pick coefficients with an explicit dependence on their index, namely
\begin{equation}\label{lamuq}\lambda_a =q^a\qquad \mu_b=q^b  \qquad (a,b\in \Z)\end{equation}
for some fixed parameter $q\in \C^*$. We have the following

\begin{thm}\label{inladet}
The generalized Lambda-determinant of the matrix $A$ \eqref{one} and with the parameters $\lambda,\mu$ of
\eqref{lamuq} reads:
$$ \vert A\vert_{\lambda,\mu}=\prod_{m=1}^{\lfloor \frac{n}{2} \rfloor} \prod_{j=2m-n}^{n-2m} (1+q^j) $$
\end{thm}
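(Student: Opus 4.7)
The plan is to solve the inhomogeneous $T$-system \eqref{inhomtsys} directly with all-ones initial data $T_{i,j,0}=T_{i,j,1}=1$ and coefficients $\lambda_i=q^i$, $\mu_j=q^j$, and then extract $\vert A\vert_{\lambda,\mu}=T_{0,0,n}$. Computing the first few levels of the recurrence gives $T_{i,j,2}=q^i+q^j$ and $T_{i,j,3}=(q^i+q^{j-1})(q^i+q^j)(q^i+q^{j+1})$, which together with the shape of the target product suggest the closed form
\begin{equation*}
T_{i,j,k}=F(i,j,k):=\prod_{m=1}^{\lfloor k/2\rfloor}\prod_{s=2m-k}^{k-2m}\big(q^i+q^{j+s}\big)
\end{equation*}
for every admissible $(i,j,k)$. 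Specializing to $i=j=0$, $k=n$ reproduces exactly $\prod_{m=1}^{\lfloor n/2\rfloor}\prod_{j=2m-n}^{n-2m}(1+q^j)$, so proving this formula for $T$ proves the theorem.

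I would establish $T_{i,j,k}=F(i,j,k)$ by induction on $k$. The cases $k=0,1$ are empty products and agree with the initial data. For the inductive step, the elementary factoring identity
\begin{equation*}
(q^{i+1}+q^a)(q^{i-1}+q^a)=(q^i+q^{a-1})(q^i+q^{a+1}),
\end{equation*}
applied at $a=j+s$ for every $(m,s)$ indexing the product, yields $F(i+1,j,k)F(i-1,j,k)=F(i,j+1,k)F(i,j-1,k)=:G$. Substituting into \eqref{inhomtsys}, the right-hand side becomes $q^j G+q^i G=(q^i+q^j)G$, so the $T$-system collapses to
\begin{equation*}
T_{i,j,k+1}T_{i,j,k-1}=(q^i+q^j)\,F(i,j+1,k)\,F(i,j-1,k).
\end{equation*}
The induction is finished once we verify the purely combinatorial identity $F(i,j,k+1)F(i,j,k-1)=(q^i+q^j)F(i,j+1,k)F(i,j-1,k)$.

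This last identity is a statement about multisets of factors $(q^i+q^{j+s})$. Letting $c_k(s):=\max\{0,\lfloor(k-|s|)/2\rfloor\}$ record the multiplicity of $(q^i+q^{j+s})$ in $F(i,j,k)$, matching exponents reduces the claim to
\begin{equation*}
c_{k+1}(s)+c_{k-1}(s)=\delta_{s,0}+c_k(s-1)+c_k(s+1),\qquad s\in\Z,\ k\geq 1.
\end{equation*}
For $s\neq 0$ this is essentially tautological: using $c_k(-t)=c_k(t)$ together with $\{|s-1|,|s+1|\}=\{|s|-1,|s|+1\}$, both sides become $\lfloor(k+1-|s|)/2\rfloor+\lfloor(k-1-|s|)/2\rfloor$ (with the $\max$ taking care of the boundary). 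The case $s=0$ contributes the $\delta_{s,0}$ term, since $\lfloor(k+1)/2\rfloor+\lfloor(k-1)/2\rfloor$ exceeds $2\lfloor(k-1)/2\rfloor$ by exactly $1$ for every $k\geq 1$. The main obstacle is spotting the correct product ansatz $F(i,j,k)$; once it is in hand, the $T$-system collapses via the quadratic identity above and the remaining verification is elementary floor-function bookkeeping.
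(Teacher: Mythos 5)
Your proof is correct and follows essentially the same route as the paper: both guess a closed product formula for the full solution $T_{i,j,k}$ of the inhomogeneous $T$-system with all-ones initial data, verify it against the recursion, and specialize to $i=j=0$, $k=n$. Your ansatz $F(i,j,k)=\prod_{m}\prod_{s}(q^i+q^{j+s})$ is just a more compact rewriting of the paper's formula \eqref{soltij} (which separates out the prefactor $q^{\frac{k(k-1)}{2}\min(i,j)}$), and your factoring identity is exactly the paper's observation that $q^jT_{i,j+1,k}T_{i,j-1,k}/(q^iT_{i+1,j,k}T_{i-1,j,k})=q^{j-i}$.
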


To prove the theorem, let us actually solve the more general $T$-system \eqref{inhomtsys}
with coefficients $\lambda_a,\mu_b$ as in \eqref{lamuq}, and with initial conditions \eqref{initdat}.
We have:

\begin{lemma}
The solution of the inhomogeneous $T$-system \eqref{inhomtsys} with initial data $T_{i,j,0}=T_{i,j,1}=1$
for all $i,j\in \Z$ reads:
\begin{eqnarray}
T_{i,j,k}&=&q^{\frac{k(k-1)}{2}{\rm Min}(i,j)}\, \prod_{m=1}^{\lfloor \frac{k-|i-j|}{2} \rfloor}\, \,  
\prod_{a=2m-k+|i-j|}^{k-|i-j|-2m} (1+q^a)\nonumber \\
&&\qquad  \qquad \ \ \times \prod_{m=1}^{|i-j|} \, \, \prod_{a=m}^{k-|i-j|+2m-2}(1+q^a)\label{soltij}
\end{eqnarray}
\end{lemma}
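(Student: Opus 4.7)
The plan is to prove the formula by induction on $k$, verifying that it satisfies both the initial conditions $T_{i,j,0}=T_{i,j,1}=1$ and the $T$-system recurrence \eqref{inhomtsys} specialized to $\lambda_a=\mu_a=q^a$. The initial cases $k=0,1$ are immediate: the $q$-prefactor $q^{\binom{k}{2}\min(i,j)}$ vanishes, and both nested products are empty because the relevant floors and inner ranges are degenerate.

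For the inductive step, I write $T_{i,j,k}=q^{\binom{k}{2}\min(i,j)}F(k,|i-j|)$ where $F(k,d)=P(k,d)Q(k,d)$ denotes the product of the two double products appearing in the formula. The formula is manifestly symmetric in $(i,j)$, and, because $\lambda=\mu$, so is the $T$-system itself; it therefore suffices to treat $i\leq j$. Substituting the formula into both sides of the recurrence and using the arithmetic identity $\binom{k+1}{2}+\binom{k-1}{2}=k^2-k+1$, a common factor of $q^{i(k^2-k+1)}$ cancels and the recurrence reduces to the combinatorial identity
$$F(k+1,d)F(k-1,d)=(1+q^d)\,F(k,d+1)F(k,d-1)\qquad(d=j-i\geq 1)$$
in the off-diagonal case $i<j$ (the sub-case $i=j-1$ collapses to the same identity with $F(k,0)=P(k,0)$), and to
$$F(k+1,0)F(k-1,0)=2\,q^{-\binom{k}{2}}F(k,1)^2$$
in the diagonal case $i=j$, where the two terms on the right-hand side of the recurrence coincide by $i\leftrightarrow j$ symmetry.

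The key observation for proving these identities is that $P(k,d)$ depends only on the combination $K=k-d$; consequently $P(k+1,d)P(k-1,d)=P(k,d+1)P(k,d-1)$ identically. The off-diagonal identity then reduces to the analogous statement for $Q$. Writing $Q(k,d)=\prod_{m=1}^d f(m,k-d+2m-2)$ with $f(m,u)=\prod_{a=m}^u(1+q^a)$, almost all factors cancel in pairs between the two sides, and what remains is the elementary identity $f(d,k+d-1)=(1+q^d)f(d+1,k+d-1)$. For the diagonal identity, I compute the ratio $P(k+1,0)P(k-1,0)/P(k,1)^2$ by tracking the multiplicity of each $(1+q^a)$-factor via an easy floor-function calculation; the result is $\prod_{a=-(k-1)}^{k-1}(1+q^a)$. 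Using $1+q^{-a}=q^{-a}(1+q^a)$ together with $\sum_{a=1}^{k-1}a=\binom{k}{2}$, this collapses to $2q^{-\binom{k}{2}}\prod_{a=1}^{k-1}(1+q^a)^2=2q^{-\binom{k}{2}}Q(k,1)^2$, exactly as required.

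The main obstacle will be the careful bookkeeping of product ranges when reducing the recurrence to the two $F$-identities, especially at the interface between the off-diagonal and diagonal cases: the asymmetric $q^{-\binom{k}{2}}$ factor in the diagonal identity arises because the $\min(i,j)$-prefactor in the formula treats the two sub-diagonals $j=i\pm 1$ asymmetrically, and propagating this through the recurrence is the most delicate point. Once the reduction is set up cleanly, the $F$-identities themselves are transparent telescoping statements.
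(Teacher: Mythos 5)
Your proposal is correct and follows essentially the same route as the paper: both verify the initial conditions and then check that the closed formula satisfies the recurrence, the paper doing so by simply asserting two ratio identities, namely $q^jT_{i,j+1,k}T_{i,j-1,k}/(q^iT_{i+1,j,k}T_{i-1,j,k})=q^{j-i}$ and $T_{i,j,k+1}T_{i,j,k-1}/(q^iT_{i+1,j,k}T_{i-1,j,k})=1+q^{j-i}$, without computation. Your reduction to the telescoping identities for $F(k,d)=P(k,d)Q(k,d)$, including the separate treatment of the diagonal case $i=j$, is exactly the bookkeeping needed to establish those ratios, so you have in effect supplied the details the paper omits.
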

\begin{proof}
By uniqueness of the solution for the given initial data, we simply have to check that the above
satisfies both the initial condition $T_{i,j,0}=T_{i,j,1}=1$ and the $T$-system with coefficients. 
The condition $T_{i,j,0}=T_{i,j,1}=1$ is clear from the formula \eqref{soltij}.
Next, we note that the expression \eqref{soltij} satisfies:
$$\frac{q^j T_{i,j+1,k}T_{i,j-1,k}}{q^iT_{i+1,j,k}T_{i-1,j,k}}=q^{j-i} \qquad (i,j\in\Z;k\in\Z_+) $$
Analogously, we compute:
$$\frac{T_{i,j,k+1}T_{i,j,k-1}}{q^iT_{i+1,j,k}T_{i-1,j,k}}= 1+q^{j-i} \qquad (i,j\in\Z;k\in\Z_{>0})$$
and therefore \eqref{inhomtsys} follows, the lemma is proved.
\end{proof}

Theorem \ref{inladet} follows by taking $i=j=0$, $k=n$ in the expression \eqref{soltij} above.
Note that the result of Theorem \ref{inladet} reduces in the homogeneous limit $q\to 1$ to:
$$ T_{0,0,n}= 2^{\frac{n(n-1)}{2}}$$
in agreement with the result of Example  \ref{maxvan} for $a_i=1$ for all $i$ and $\lambda=\mu=1$.

\section{Computing the generalized Lambda-determinant}

We first write a solution of the $T$-system with inhomogeneous coefficients, based
on a matrix representation generalizing the solution of \cite{DFK12}.

\subsection{Inhomogeneous $T$-system solution I: $V$ and $U$ matrices, definition and properties}

Let us consider the square lattice with vertex set $\Z^2$, and its elementary triangulations, obtained by picking
either of the two possible diagonal edges in each square face. We consider a pair of such triangles
sharing an horizontal edge of the lattice as a generalized rhombus, and we restricted ourselves
to bicolored triangulations such that exactly one of the two triangles
in each generalized rhombus is grey, the other one being white. 
We moreover attach variables to the vertices of the lattice
This gives rise to eight possible generalized rhombi.
In analogy with the solution of the $A_r$ $T$-system \cite{DF}, we associate the following $2\times 2$ matrices
to each of these generalized rhombi, with entries Laurent monomials of the variables at the three vertices
adjacent to the grey triangle, and also depending on fixed coefficients $\lambda,\mu$:
\begin{eqnarray*}
V(d,a,b;\lambda,\mu)&=&\begin{pmatrix} \mu  \frac{a}{b} & \lambda \frac{d}{b} \\ 0 & 1  \end{pmatrix}= 
\raisebox{-1.2cm}{\hbox{\epsfxsize=5.cm \epsfbox{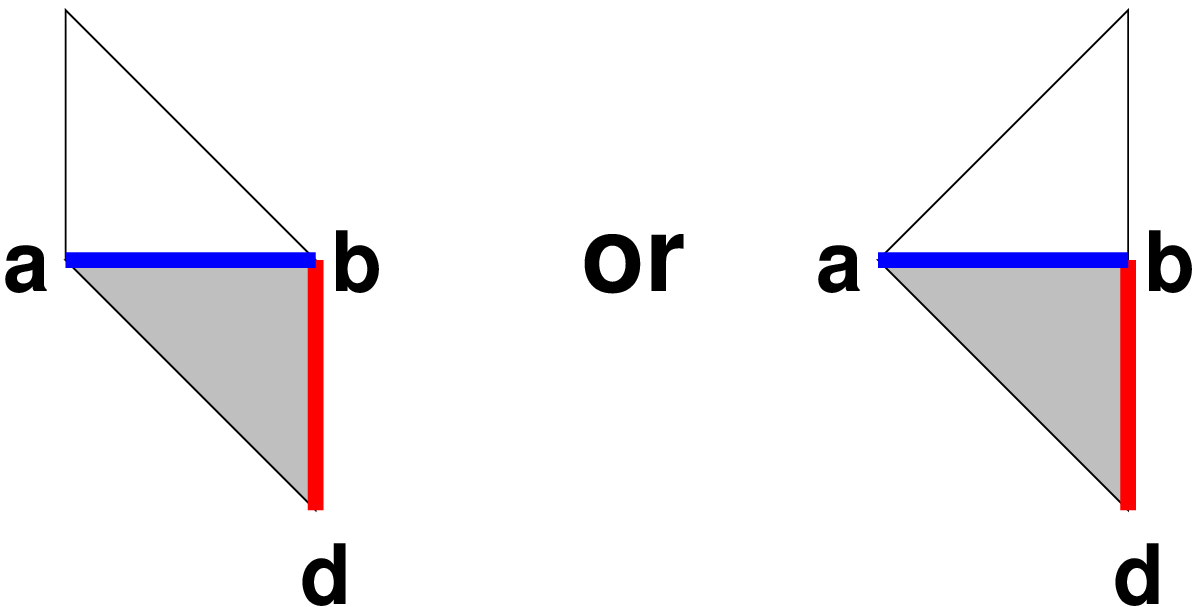}}}\\
V'(d,a,b;\lambda,\mu)&=& \begin{pmatrix}  \frac{a}{b} & \lambda \frac{d}{b} \\ 0 & 1  \end{pmatrix}= 
\raisebox{-1.2cm}{\hbox{\epsfxsize=5.cm \epsfbox{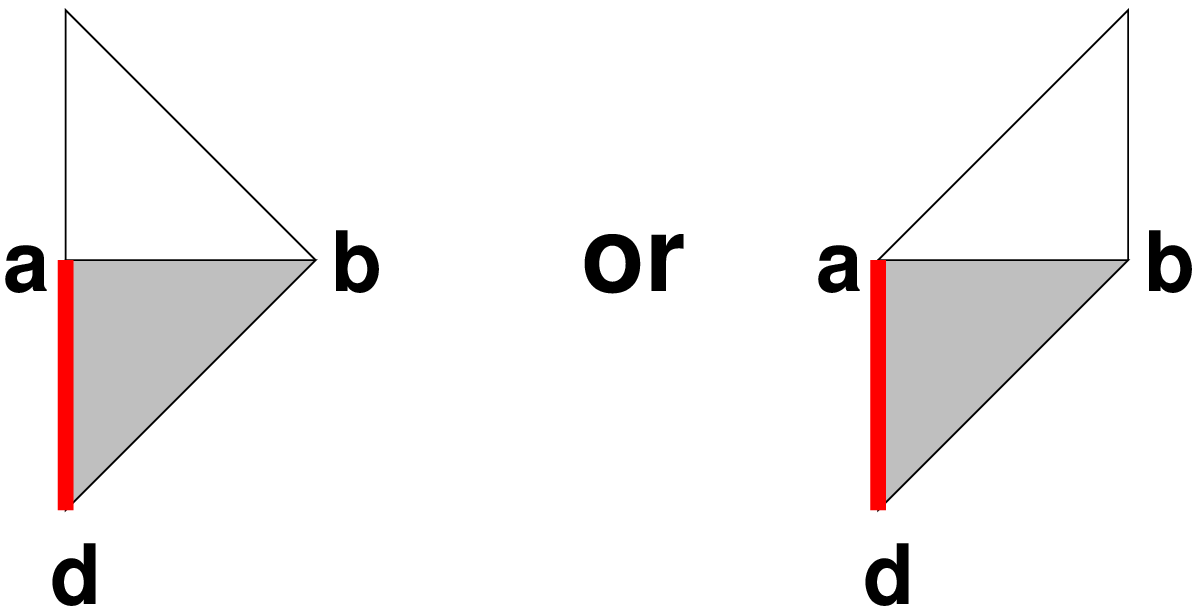}}}\\
U(a,b,c;\lambda,\mu)&=&\begin{pmatrix} 1 & 0 \\  \frac{c}{b} &  \frac{a}{b} \end{pmatrix}= 
\raisebox{-1.cm}{\hbox{\epsfxsize=5.cm \epsfbox{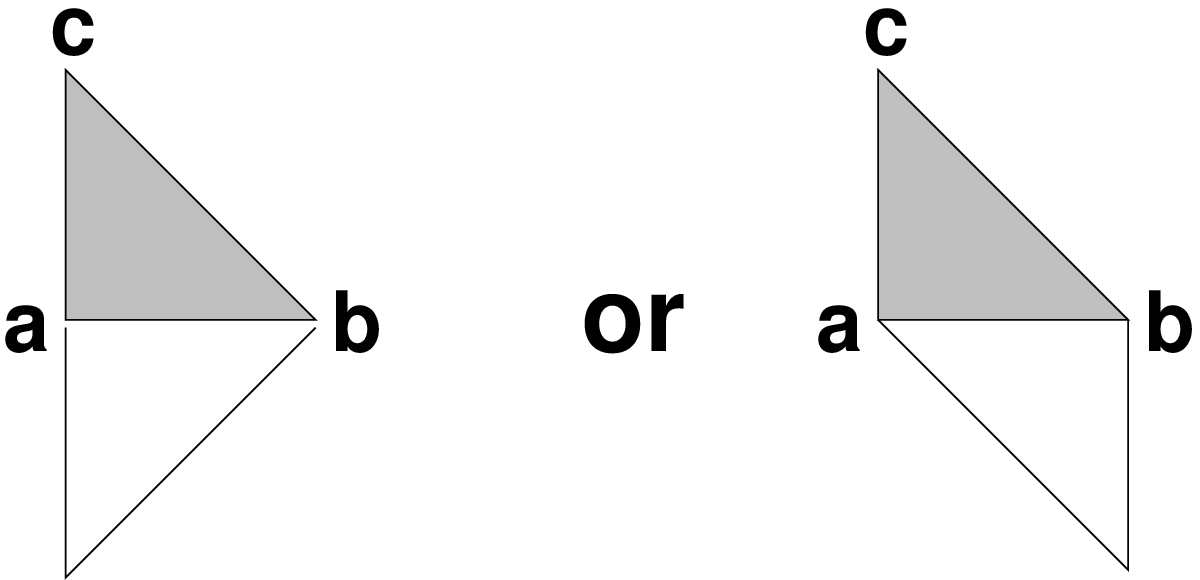}}}\\
U'(a,b,c;\lambda,\mu)&=&\begin{pmatrix} 1 & 0 \\ \frac{c}{b} & \mu \frac{a}{b} \end{pmatrix}= 
\raisebox{-1.cm}{\hbox{\epsfxsize=5.cm \epsfbox{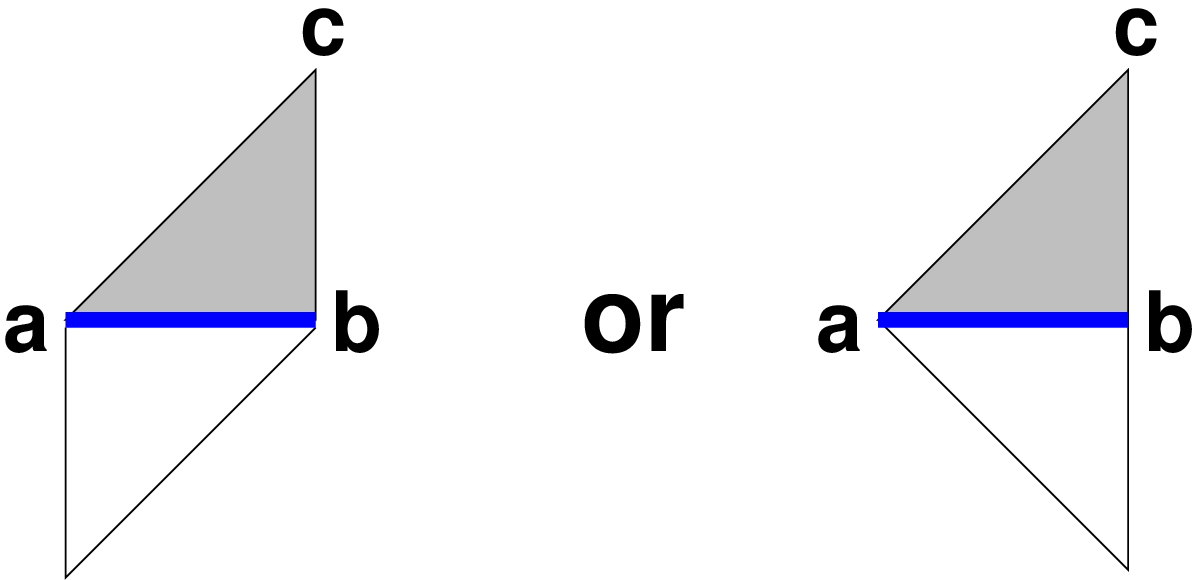}}}
\end{eqnarray*}
Note that we have represented a thicker blue horizontal (resp. red vertical) edge to indicate the presence of the
parameter $\mu$ (resp. $\lambda$) in the corresponding matrices.
Next, we introduce a graphical calculus, by associating to pictures some matrix products.
We read pictures from left to right, and take the product of matrices in the same order.

With this rule, the above  matrices satisfy the following property, easily checked by direct calculation.

\begin{lemma}\label{vu=upvp}
\begin{eqnarray*}\raisebox{-1.5cm}{\hbox{\epsfxsize=3.cm \epsfbox{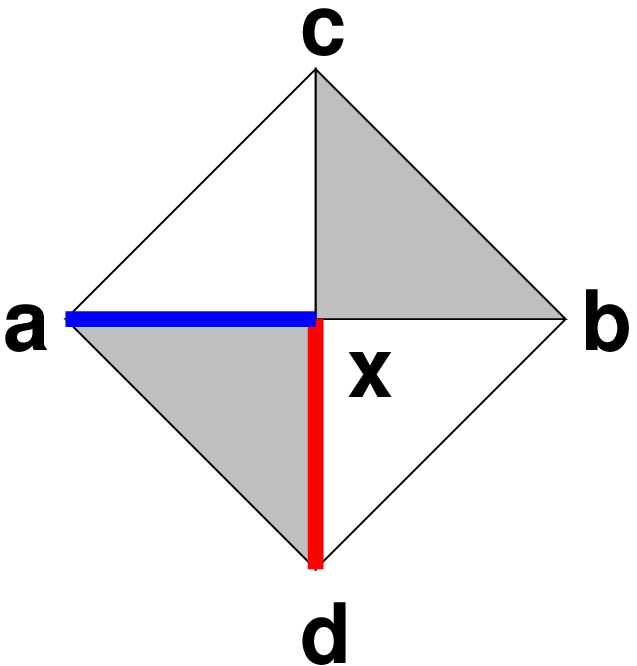}}}&=&V(d,a,x)U(x,b,c)
= U'(a,x',c)V'(d,x',b)=\raisebox{-1.5cm}{\hbox{\epsfxsize=3.cm \epsfbox{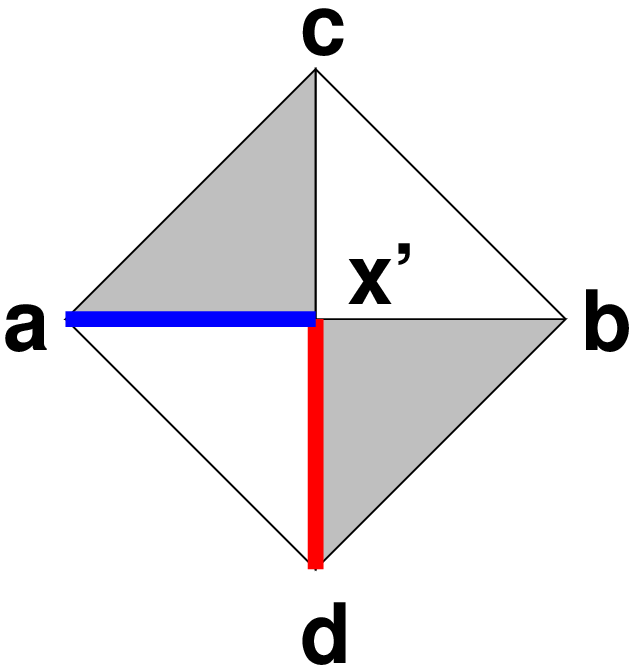}}} \\
&&\qquad {\rm iff}\qquad x x'=\mu a b +\lambda c d
\end{eqnarray*}
\end{lemma}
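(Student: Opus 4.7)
The plan is to verify the identity by direct matrix multiplication on both sides and then compare entries. This is essentially a $2\times 2$ matrix computation, so there is no deep obstacle; the task is just to set it up cleanly and observe that the two conditions that arise (from matching the two diagonal entries) coincide.

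First I would write out both products explicitly using the given formulas. On the left,
\[
V(d,a,x)\,U(x,b,c)=\begin{pmatrix} \mu a/x & \lambda d/x \\ 0 & 1 \end{pmatrix}\begin{pmatrix} 1 & 0 \\ c/b & x/b \end{pmatrix}=\begin{pmatrix} (\mu ab+\lambda cd)/(xb) & \lambda d/b \\ c/b & x/b \end{pmatrix},
\]
and on the right,
\[
U'(a,x',c)\,V'(d,x',b)=\begin{pmatrix} 1 & 0 \\ c/x' & \mu a/x' \end{pmatrix}\begin{pmatrix} x'/b & \lambda d/b \\ 0 & 1 \end{pmatrix}=\begin{pmatrix} x'/b & \lambda d/b \\ c/b & (\mu ab+\lambda cd)/(x'b) \end{pmatrix}.
\]

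Next I would compare the four entries. The off-diagonal entries $\lambda d/b$ (top-right) and $c/b$ (bottom-left) agree automatically, independently of $x$ and $x'$. The top-left entries agree precisely when $(\mu ab+\lambda cd)/(xb)=x'/b$, i.e.\ $xx'=\mu ab+\lambda cd$; the bottom-right entries agree precisely when $x/b=(\mu ab+\lambda cd)/(x'b)$, which is the same relation. Hence the single scalar condition $xx'=\mu ab+\lambda cd$ is both necessary and sufficient for the two $2\times 2$ matrices to coincide, which is the statement of the lemma.

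The only mildly subtle point is to notice that the two a priori distinct equations coming from the two diagonal entries collapse to the same relation, which is what makes the criterion a single local ``$T$-system type'' identity; I would comment on this briefly and record that the right-hand expression $\mu ab+\lambda cd$ is exactly the combination appearing in the inhomogeneous $T$-system \eqref{inhomtsys}, foreshadowing the role this local move will play in building up solutions by concatenating these matrix factors along a triangulation.
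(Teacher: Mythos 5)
Your proof is correct and is exactly the computation the paper has in mind: the paper simply asserts that the identity is ``easily checked by direct calculation,'' and your explicit multiplication of the $2\times 2$ factors, with the observation that both diagonal comparisons collapse to the single relation $xx'=\mu ab+\lambda cd$, is that calculation carried out in full.
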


We use the standard embeddings of any $2\times 2$ matrix $M$ into $SL_{n}$ denoted $M_i$, $i=1,2,...,n-1$
with entries 
\begin{equation*}
(M_i)_{r,s}=\left\{ \begin{matrix} \delta_{r,s} 
& {\rm if}\, r \not\in\{i,i+1\}\, {\rm or}\, s\not\in\{i,i+1\} \\
M_{r,s} & {\rm otherwise} \end{matrix} \right.
\end{equation*}
In this embedding, it is clear that for any $2\times 2$ matrices $M,P$ $M_i$ and $P_j$ commute unless
$j-i\in \{0,\pm 1\}$.
We have also the following exchange properties, easily checked directly:

\begin{lemma}\label{diago}
The matrices $U_i,V_j$ satisfy:
\begin{eqnarray*}
U_{i}(a,b,c;\lambda,\mu)V_{i+1}(b,c,d;\lambda,\mu)
&=&V_{i+1}'(a,c,d;\lambda,\mu)U_i'(a,b,d;\lambda,\mu)\\
\raisebox{-1.8cm}{\hbox{\epsfxsize=2.cm \epsfbox{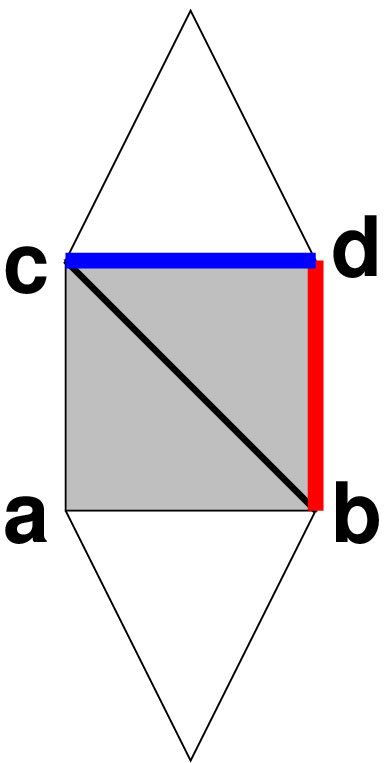}}}
\quad &=&\quad \raisebox{-1.8cm}{\hbox{\epsfxsize=2.cm \epsfbox{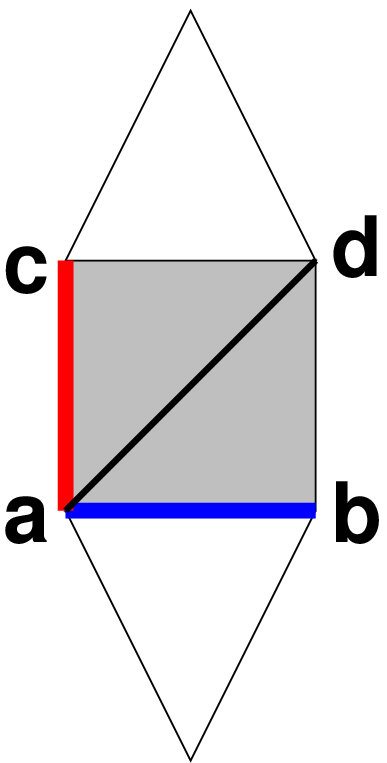}}}
\end{eqnarray*}
where again the pictures are read from left to right and the direction of the diagonal of a square indicates
which generalized rhombus is to the left of the other, and we have represented
the 4 up/down pointing white triangles in-between the two possible configurations of half-squares
each of them may be in (total of 16 configurations here).
We also have the commutation relations:
\begin{eqnarray*}
[V_i(c,a,b),U_j(d,e,f)]&=&[V_i'(c,a,b),U_j(d,e,f)]=[V_i(c,a,b),U_j'(d,e,f)]\\
&=&[V_i'(c,a,b),U_j'(d,e,f)]=0\qquad (j\neq i,i-1)
\end{eqnarray*}
\end{lemma}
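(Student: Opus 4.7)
The plan is to verify both parts by direct matrix computation, exploiting the fact that the embedding $M \mapsto M_i$ into $SL_n$ only acts non-trivially on rows and columns $\{i, i+1\}$.

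For the exchange identity I would restrict attention to the $3 \times 3$ block indexed by $\{i, i+1, i+2\}$, since both sides act as the identity outside this block. Expanding $U_i(a,b,c)\,V_{i+1}(b,c,d)$, the only non-trivial row of the resulting block is row $i+1$, which after simplification equals $(c/b,\; \mu ac/(bd),\; \lambda a/d)$; the same three entries appear in row $i+1$ of $V'_{i+1}(a,c,d)\,U'_i(a,b,d)$. The matching is forced by the sharing of arguments $b,c$ on the left and $a,d$ on the right, which correspond geometrically to the vertices lying on the diagonal separating the two grey triangles of the compound rhombus.

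For the commutation relations I would split into two regimes. When $|i-j| \geq 2$, the non-trivial $2\times 2$ windows of $V_i$ (or $V'_i$) and $U_j$ (or $U'_j$) are disjoint, so the matrices commute automatically. The only remaining case under the restriction $j \neq i,\, i-1$ is $j = i+1$. Here the two matrices share index $i+1$, but they still commute, as is visible from a small $3 \times 3$ check. The structural reason is that $V$ is upper triangular with lower-right entry $1$, while $U$ is lower triangular with upper-left entry $1$, so the shared slot $i+1$ receives the identity from both factors. Consequently the non-trivial off-diagonal entry of $V_i$ (sitting in row $i$) and the non-trivial off-diagonal entry of $U_{i+1}$ (sitting in row $i+2$) can never interact. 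Priming only toggles a diagonal entry between $1$ and $\mu$ without affecting triangularity, so the same argument covers all four pairings $V$--$U$, $V'$--$U$, $V$--$U'$, and $V'$--$U'$.

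The main obstacle is really just bookkeeping: tracking which of the three arguments of $U,V,U',V'$ occupies which matrix slot, and keeping the $\lambda,\mu$ insertions straight across the primed and unprimed versions. The pictorial description at the top of the section makes this clerical task transparent, since the exchange relation then appears as the local flip of the diagonal separating the two grey triangles of a generalized rhombus, and the commutation relations correspond to the geometric disjointness of rhombi on the lattice.
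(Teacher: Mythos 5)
Your proposal is correct and takes the same route the paper intends: the paper gives no written proof beyond declaring the identities ``easily checked directly,'' and your computation of the $3\times 3$ block (row $i+1$ equal to $\bigl(c/b,\;\mu ac/(bd),\;\lambda a/d\bigr)$ on both sides of the exchange relation) is exactly that check, carried out correctly. Your treatment of the commutation relations is also sound, including the one genuinely overlapping case $j=i+1$, where the shared diagonal slot receiving $1$ from both factors and the disjointness of the nontrivial rows make the cross terms vanish for all four primed/unprimed pairings.
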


The relations of Lemmas \ref{vu=upvp} and \ref{diago} are all we need to construct 
the solutions of the unrestricted $A_\infty$ $T$-system with coefficients, expressed in terms
of products of $U$ and $V$ matrices. The relation of Lemma \ref{vu=upvp}
allows to implement the $T$-system evolution on the variable in the center $x\to x'$. 
The relations of Lemma \ref{diago} allow for rearranging triangulations so as to be able
to apply the former. In particular, when representing white and grey triangulations, we may omit the
diagonal of each square of a uniform color, as both choices of diagonal lead to the same matrix product.
So we will typically consider tilings of domains of $\Z^2$ by means of grey and white unit squares (2 tiles),
and grey and white triangles equal to half a unit square, cut by one diagonal (4 tiles) such that 
any edge is common to two tiles of different colors. We call  such a tiling a square-triangle 
tiling. 

\subsection{Inhomogeneous $T$-system solution II: a determinant formula}

We consider the tilted square domain $D_k(i,j)\subset \Z^2$ defined by
$$ D_k(j,i)=\{ (x,y)\in \Z^2\, {\rm such}\, {\rm that}\, |x-j|+|y-i|\leq k-1\} \qquad (i,j\in \Z;k\in \Z_+)$$
We also consider two square-triangle tilings of $D_k(j,i)$, denoted by $\theta_{min}(k)$ and $\theta_{max}(k)$
defined as follows. $\theta_{min}(k)$ is the intersection of a checkerboard of alternating grey and white squares
with $D_k(j,i)$ such that the NW border of $D_k(j,i)$ is adjacent only to white triangles
(likewise, the SE border is adjacent to only white triangles, while the SW and NE borders are adjacent
only to black triangles). $\theta_{max}(k)$is the only square-triangle tiling without squares having opposite color boundary assignments (NW,SE grey, SW,NE white).
Here are examples of $\theta_{min}(k)$ and $\theta_{max}(k)$ for $k=5$:
$$ \theta_{min}(5)=\raisebox{-2.cm}{\hbox{\epsfxsize=5.cm \epsfbox{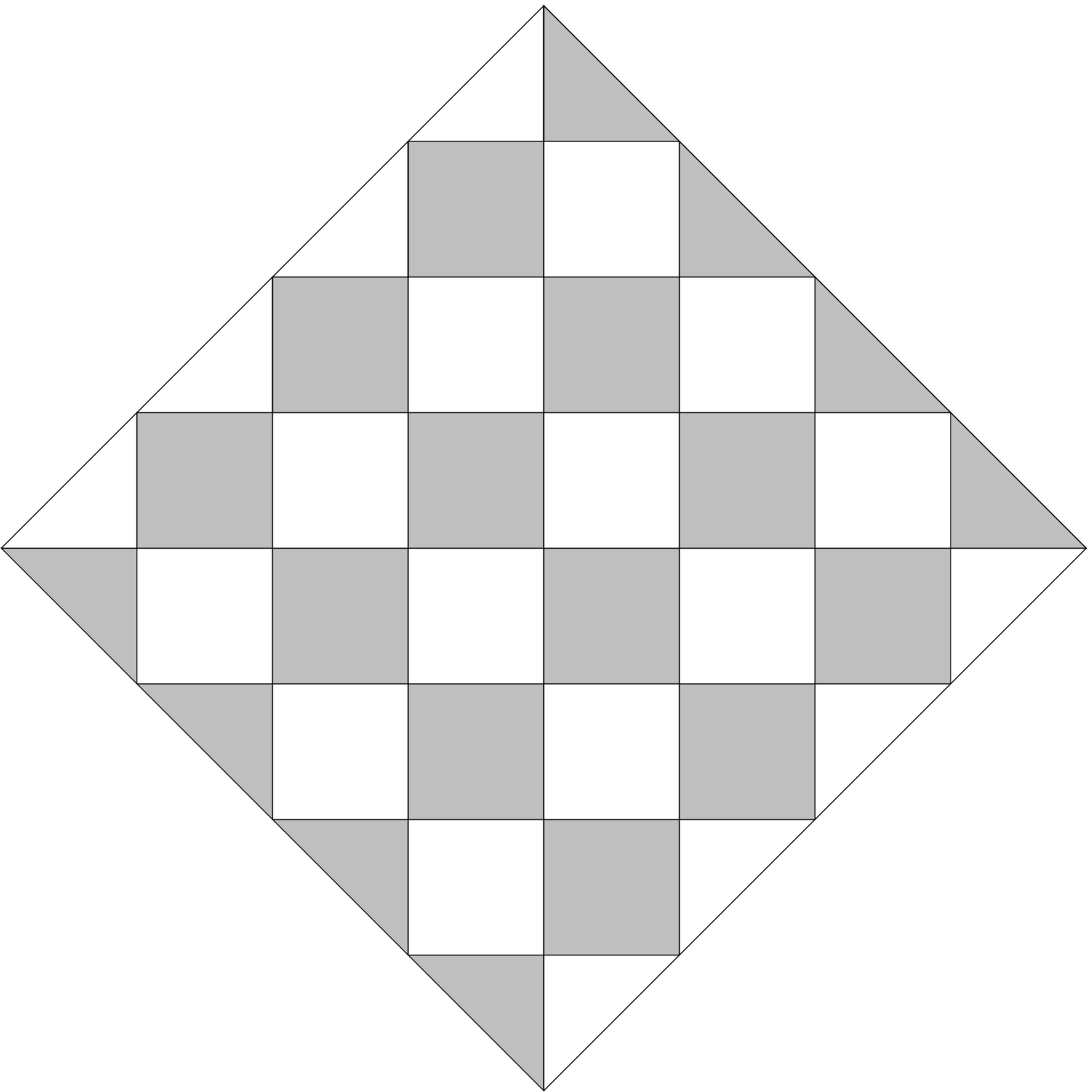}}}\ ,\qquad
\theta_{max}(5)=\raisebox{-2.cm}{\hbox{\epsfxsize=5.cm \epsfbox{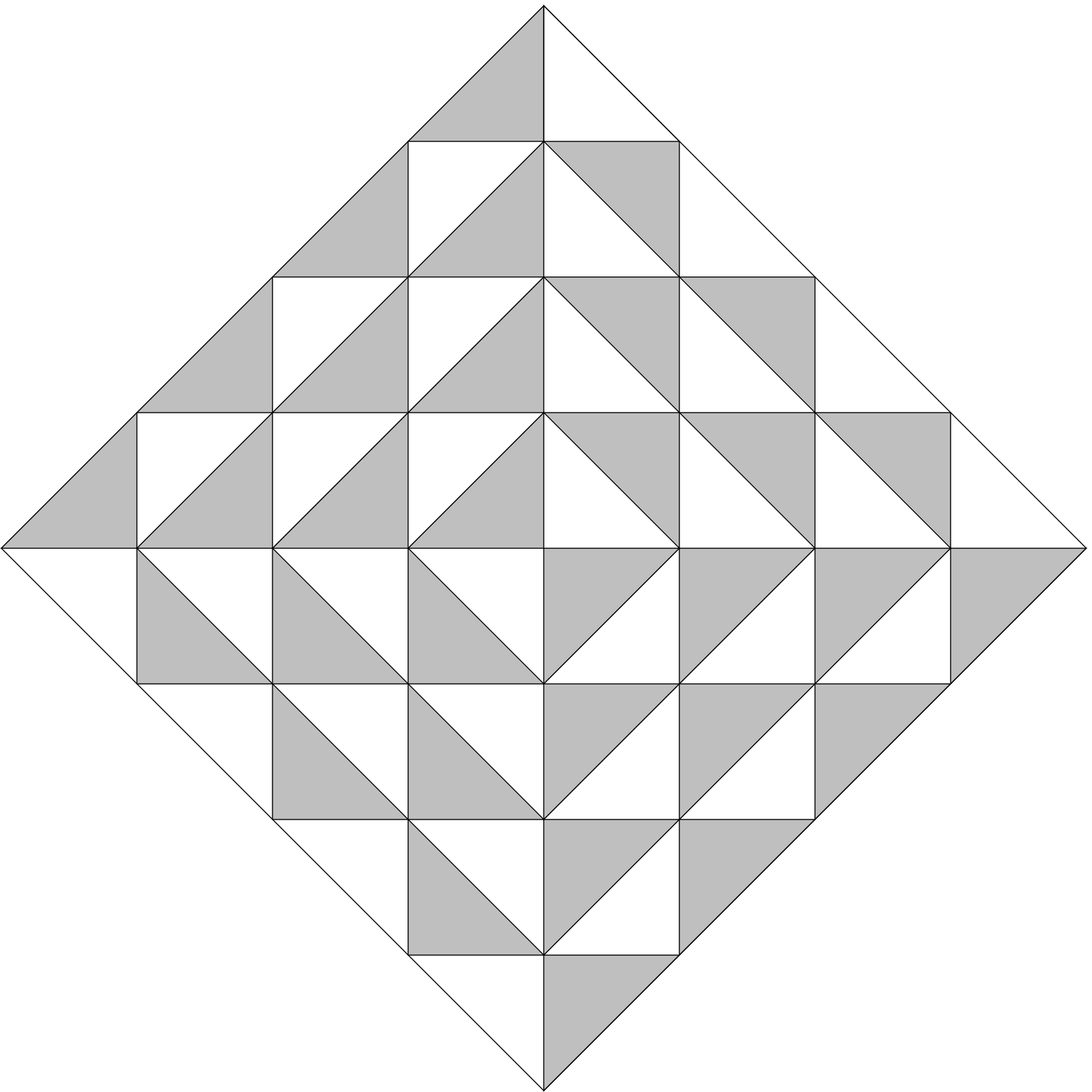}}}$$

To attach matrices to these tilings, let us also include thick (red or blue) edges like in the representations
of $U,V,U',V'$. We fix the ambiguity on grey squares of $\theta_{min}(k)$ (from Lemma \ref{diago}) by 
picking the l.h.s. representation. This gives two decorated tilings ${\tilde\theta}_{min}(k)$ and ${\tilde\theta}_{max}(k)$
which for $k=5$ read:
$${\tilde\theta}_{min}(5)=\raisebox{-2.cm}{\hbox{\epsfxsize=5.cm \epsfbox{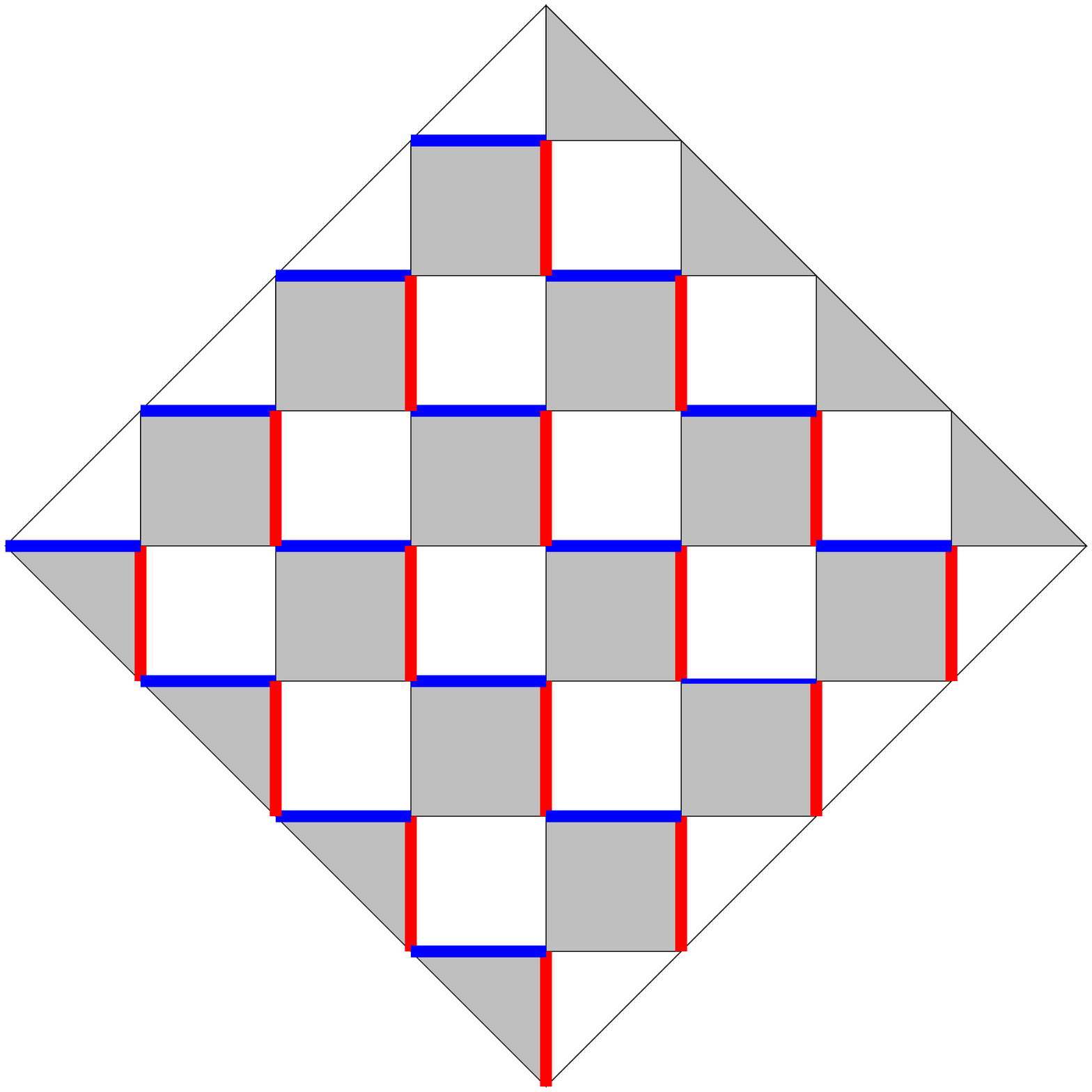}}}\ ,\qquad
{\tilde\theta}_{max}(5)=\raisebox{-2.cm}{\hbox{\epsfxsize=5.cm \epsfbox{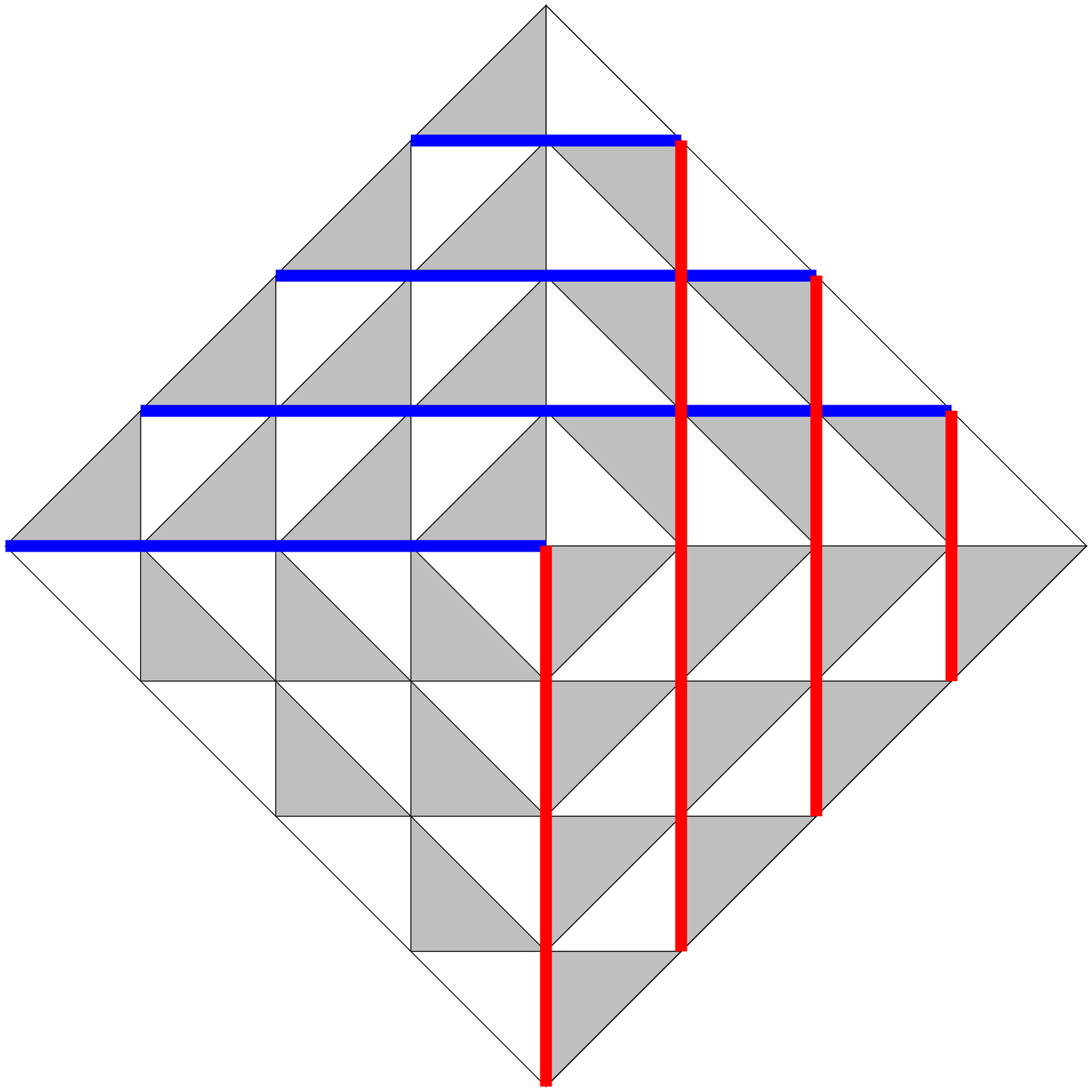}}}$$
We finally attach variables to the vertices and coefficients to the thickened edges of the tilings as follows.
On ${\tilde \theta}_{min}(k)$ we assign variables $t_{i,j}$ to the vertices $(j,i)\in D_k(0,0)$, 
and coefficients $\lambda_i$ to thickened vertical (red) edges with top vertex of the form $(\ell,i)$,
$\mu_j$ to thickened horizontal (blue) edges with right vertex of the form $(j,m)$, $\ell,m\in \Z$.
On ${\tilde \theta}_{max}(k)$ we assign variables $u_{i,j}$ to the vertices $(j,i)\in D_k(0,0)$, 
and coefficients $\lambda_i$ to thickened vertical (red) edges with top vertex of the form $(\ell,i)$,
$\mu_j$ to thickened horizontal (blue) edges with right vertex of the form $(j,m)$, $\ell,m\in \Z$.
For completeness, we assign the value $\mu=1$ (resp. $\lambda=1$) to coefficients corresponding to non-thickened
horizontal (resp. vertical) edges.

We finally attach matrices $\Theta_{min}(k)$ and $\Theta_{max}(k)$ to the two tilings.
These are defined as the products of $U_i,V_i,U'_i,V'_i$ matrices embedded into $SL_{2n-2}$,
coded respectively by the two decorated tilings with assigned vertex and thickened edge values.

\begin{example}
For $k=3$, we have:
\begin{eqnarray*} \Theta_{min}(3)&=& 
V_2(t_{-1,-1},t_{0,-2},t_{0,-1};\lambda_{0},\mu_{-1})V_1(t_{-2,0},t_{-1,-1},t_{-1,0};\lambda_{-1},\mu_0)\\
&&\times U_2(t_{0,-1},t_{0,0},t_{1,-1};1,1)V_3(t_{0,0},t_{1,-1},t_{1,0};\lambda_1,\mu_0)\\
&&\times U_1(t_{-1,0},t_{-1,1},t_{0,0};1,1)V_2(t_{-1,1},t_{0,0},t_{0,1};\lambda_0,\mu_1)\\
&&\times U_3(t_{1,0},t_{1,1},t_{2,0};1,1)U_2(t_{0,1},t_{0,2},t_{1,1};1,1) 
\end{eqnarray*}
corresponding to the following variables and coefficients:
$$ \raisebox{-3.cm}{\hbox{\epsfxsize=8.cm \epsfbox{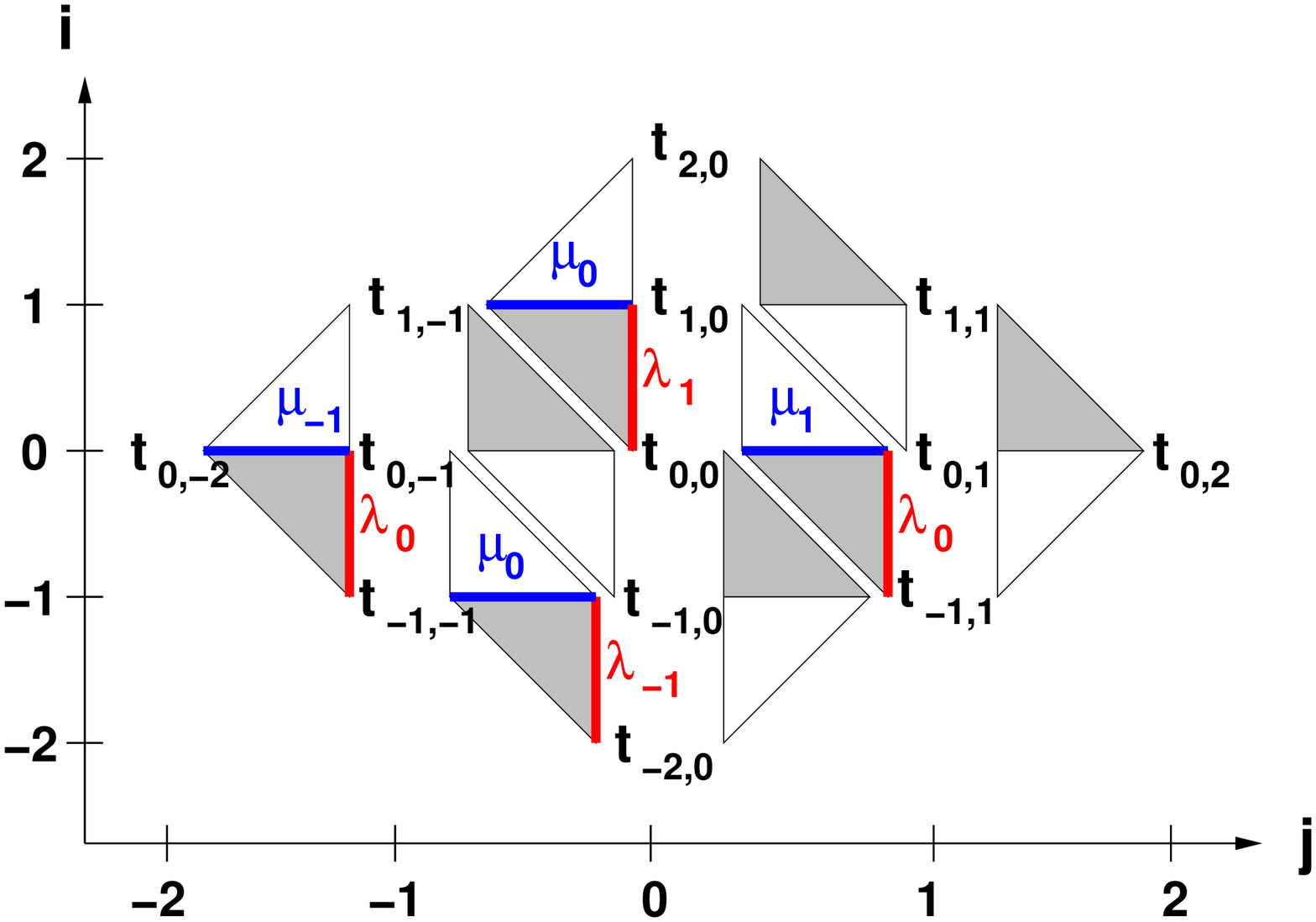}}} $$
Likewise, we have:
\begin{eqnarray*} \Theta_{max}(3)&=& 
U_2'(u_{0,-2},u_{0,-1},u_{1,-1};1,\mu_{-1})U_1(u_{-1,-1},u_{-1,0},u_{0,-1};1,1)\\
&&\times U_3'(u_{1,-1},u_{1,0},u_{2,0};1,\mu_0)U_2'(u_{0,-1},u_{0,0},u_{1,0};1,\mu_0)\\
&&\times V_2'(u_{-1,0},u_{0,0},u_{0,1};\lambda_0,1)V_1'(u_{-2,0},u_{-1,0},u_{-1,1};\lambda_{-1},1)\\
&&\times V_3(u_{0,1}u_{1,0},u_{1,1};\lambda_1,\mu_1)V_2'(u_{-1,1},u_{0,1},u_{0,2};\lambda_0,1) 
\end{eqnarray*}
corresponding to the following variables and coefficients:
$$ \raisebox{-3.cm}{\hbox{\epsfxsize=8.cm \epsfbox{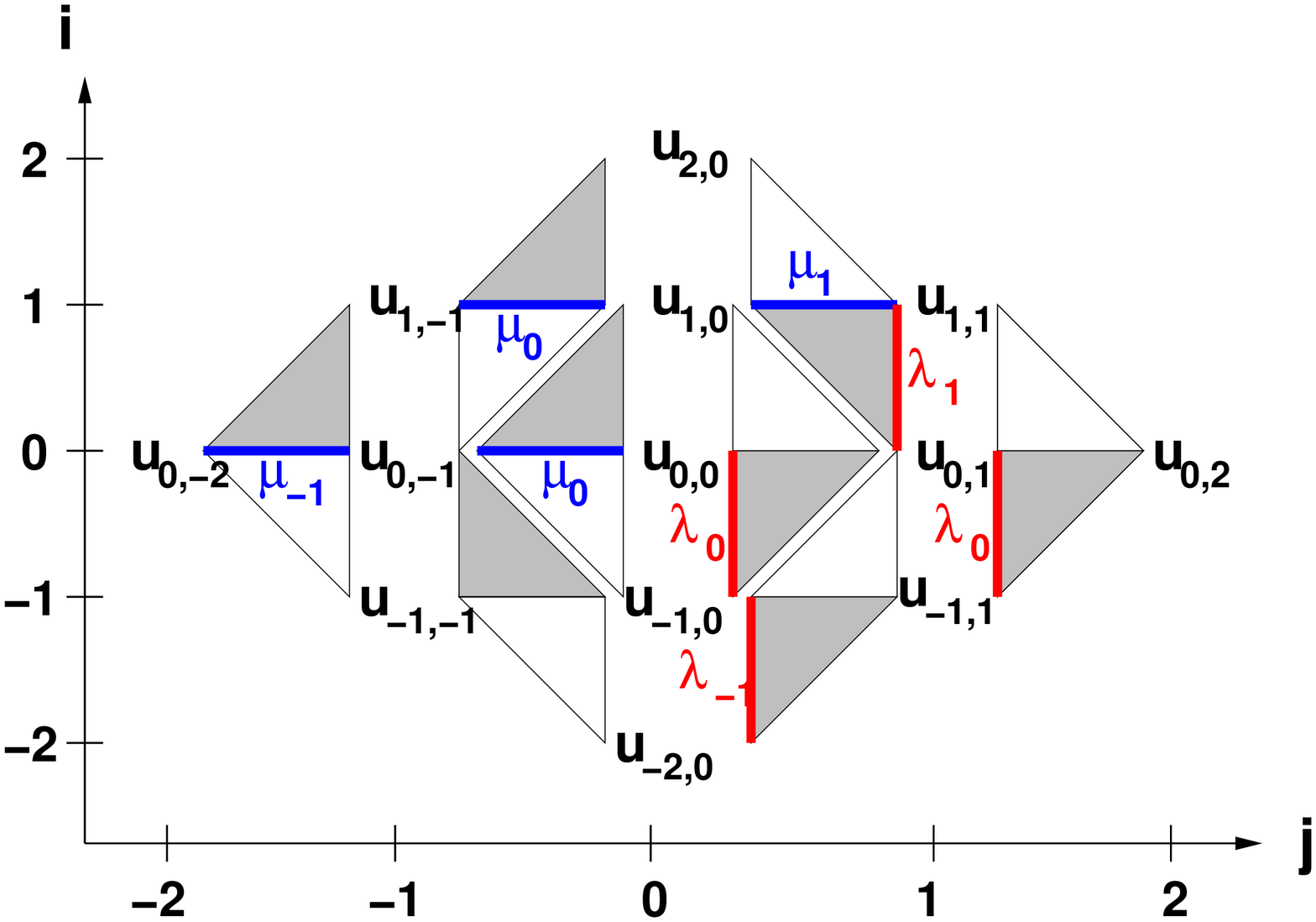}}} $$
\end{example}

We have:

\begin{thm}\label{identet}
Let $n\geq 1$, and let us pick variables $u_{i,j}$ as:
\begin{equation}\label{defu} u_{i,j}=T_{i,j,n-|i|-|j|} \qquad (j,i)\in D_n(0,0) 
\end{equation}
where $T_{i,j,k}$, for $i+j+k=n$ mod 2, is the solution of the $T$-system with coefficients \eqref{inhomtsys}
subject to the initial conditions 
\begin{equation}\label{tinit} T_{i,j,\epsilon_{i,j,n}}=t_{i,j}\qquad (j,i)\in D_n(0,0) 
\end{equation} 
where $\epsilon_{i,j,n}=(i+j+n\, {\rm mod}\, 2)\in \{0,1\}$.
Then we have the matrix identity
$$ \Theta_{min}(n)=\Theta_{max}(n) $$
\end{thm}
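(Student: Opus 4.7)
The plan is to transform the decorated tiling ${\tilde\theta}_{min}(n)$ into ${\tilde\theta}_{max}(n)$ by a finite sequence of local moves, verifying at each step that the associated matrix product is preserved. Two kinds of moves are used. First, the commutation and diagonal-exchange relations of Lemma \ref{diago} allow one to rearrange adjacent triangles and reorder commuting factors $U_i,V_j,U'_i,V'_j$ without altering either the matrix product or any vertex variable. Second, the flip identity of Lemma \ref{vu=upvp}, $V_i U_{i+1} = U'_{i+1} V'_i$ with $xx' = \mu ab + \lambda cd$, is exactly one step of the $T$-system recurrence \eqref{inhomtsys}: it replaces a central variable $T_{i,j,k-1}$ by $T_{i,j,k+1}$ using the four level-$k$ neighbors, with the coefficients $\lambda_i,\mu_j$ read off from the thickened edges incident to the center.

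Starting from ${\tilde\theta}_{min}(n)$, whose vertex variables are $t_{i,j} = T_{i,j,\epsilon_{i,j,n}}$ (the minimal admissible level $0$ or $1$), I would construct an interpolating sequence of admissible decorated tilings in which the level at each vertex $(j,i)$ is progressively raised from $\epsilon_{i,j,n}$ to its maximal value $n-|i|-|j|$, which is exactly the level attained in ${\tilde\theta}_{max}(n)$. The scheduling is dictated by the $T$-system itself: a flip at $(j,i)$ raising the central level from $k-1$ to $k+1$ requires all four neighbors to already sit at level $k$. This forces a wavefront-like sweep proceeding from the boundary of $D_n(0,0)$ inward, and between consecutive flips finitely many Lemma \ref{diago} rearrangements are inserted to bring the appropriate $V_iU_{i+1}$ pair adjacent in the product so the flip becomes applicable.

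The main obstacle is the combinatorial bookkeeping of this interleaved sequence: at each stage one must identify precisely which subproduct in the current matrix representation is eligible for a flip, match it with a local patch of the evolving tiling, and verify that the resulting decorated tiling remains admissible (the alternating-color condition across every edge, together with the correct placement of thickened red and blue edges labelling $\lambda_i$ and $\mu_j$). Once this scheme is in place, invariance of the matrix product along the whole sequence is immediate from the local identities of Lemmas \ref{vu=upvp} and \ref{diago}, and the terminal decorated tiling coincides with ${\tilde\theta}_{max}(n)$ carrying the variables $u_{i,j} = T_{i,j,n-|i|-|j|}$, which yields the required identity $\Theta_{min}(n) = \Theta_{max}(n)$.
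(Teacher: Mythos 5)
Your proposal is correct and follows essentially the same route as the paper: alternately applying Lemma \ref{diago} to flip square diagonals and Lemma \ref{vu=upvp} to implement one step of the $T$-system update at each eligible vertex, sweeping inward through shrinking domains $D_{n-k}(0,0)$ until the tiling becomes ${\tilde\theta}_{max}(n)$ carrying the variables $u_{i,j}=T_{i,j,n-|i|-|j|}$. The paper organizes the sweep by parity layers (updating all vertices with $i+j=n+k$ bmod $2$ at once) rather than vertex by vertex, but the bookkeeping and the invariance argument are the same.
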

\begin{proof}
The identity is proved by showing that we may transform the minimal tiling ${\tilde \theta}_{min}(n)$
with variables $t_{i,j}$ into the maximal tiling $\theta_{max}(n)$ with variables $u_{i,j}$ by successive
applications of the Lemmas \ref{vu=upvp} and \ref{diago}. This is illustrated below for the case $n=3$.
$$  \raisebox{-2.cm}{\hbox{\epsfxsize=16.cm \epsfbox{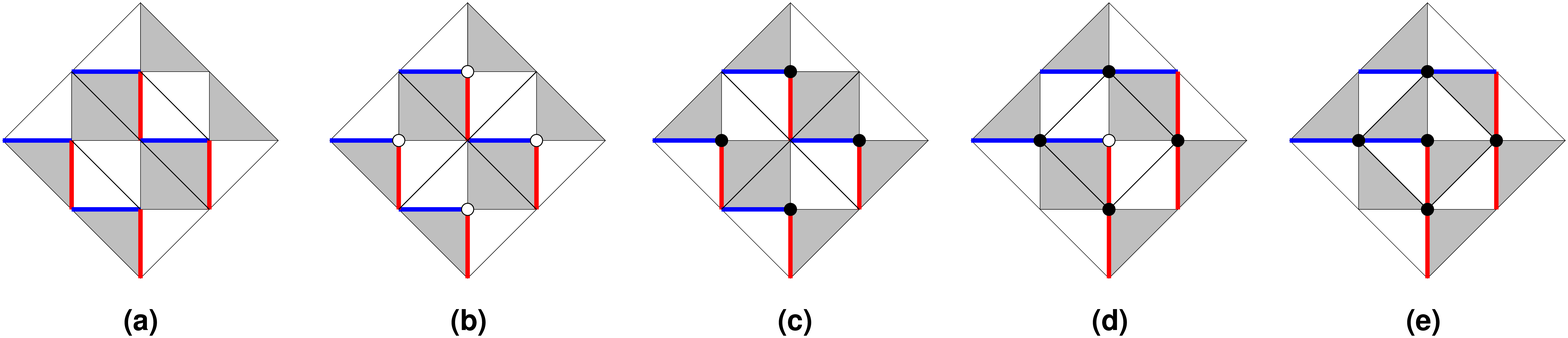}}} $$
Starting from 
${\tilde \theta}_{min}(n)$ (a), let us first apply Lemma \ref{diago} to flip the diagonals of all the white squares
(b).
Then apply Lemma \ref{vu=upvp} to update all the values at vertices $(j,i)\in D_n(0,0)$ such that
$i+j=n$ mod 2 (marked as empty circles on (b)). 
Due to the update relation $xx'=\mu ab+\lambda cd$, and comparing with the
$T$-system evolution \eqref{inhomtsys}, we see that the updated values are $T_{i,j,2}$ (filled vertices on (c)). 
We then use Lemma \ref{diago} to flip the diagonals of all the remaining squares (d), then update 
the values at vertices $(j,i)\in D_{n-1}(0,0)$ such that
$i+j=n-1$ mod 2 (empty circle on (d)), yielding updated values $T_{i,j,3}$. The procedure is iterated until
there are no more squares. In the final tiling configuration, the boundary triangles have opposite colors from
those of ${\tilde \theta}_{min}(n)$, the tiling is therefore ${\tilde \theta}_{max}(n)$.
Note that the last (central) updated value is $u_{0,0}=T_{0,0,n}$. As both Lemmas \ref{vu=upvp} and \ref{diago}
leave the result of the corresponding matrix products unchanged, the theorem follows.
\end{proof}

\begin{cor}\label{identa}
Let $a_{min}(n)$, $a_{max}(n)$ denote respectively the $n-1\times n-1$ principal minors
of $\Theta_{min}(n)$ and $\Theta_{max}(n)$, then we have:
$$a_{min}(n)=a_{max}(n)$$
\end{cor}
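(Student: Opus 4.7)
The plan is to observe that Corollary \ref{identa} is an immediate consequence of Theorem \ref{identet}. Once one has the matrix identity $\Theta_{min}(n)=\Theta_{max}(n)$ as elements of $SL_{2n-2}$, every polynomial expression in the matrix entries agrees between the two sides, and in particular every fixed minor (of every size, principal or otherwise) of one matrix equals the corresponding minor of the other. Applying this to the $(n-1)\times(n-1)$ principal minor gives $a_{min}(n)=a_{max}(n)$ directly, with no further computation required.

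Concretely, I would write: \emph{Proof.} By Theorem \ref{identet} we have $\Theta_{min}(n)=\Theta_{max}(n)$ as $2(n-1)\times 2(n-1)$ matrices; the determinant of the upper-left $(n-1)\times(n-1)$ block therefore coincides on both sides, which is precisely $a_{min}(n)=a_{max}(n)$. $\square$

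There is essentially no obstacle at the level of this corollary itself, since all the work has been absorbed into Theorem \ref{identet}, whose proof proceeds by the inductive sequence of local moves (Lemmas \ref{vu=upvp} and \ref{diago}) that interpolates between the decorated tilings ${\tilde\theta}_{min}(n)$ and ${\tilde\theta}_{max}(n)$. The interest of the corollary rather lies in what it enables downstream: the principal minor of a product of the $U_i,V_i,U'_i,V'_i$ matrices admits an interpretation via the Lindström--Gessel--Viennot lemma as a sum of weighted families of non-intersecting paths on the network encoded by the tiling. Thus $a_{min}(n)$ gives a clean combinatorial expression purely in terms of the initial data $t_{i,j}$, while $a_{max}(n)$ expresses the same quantity in terms of the solved variables $u_{i,j}=T_{i,j,n-|i|-|j|}$. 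This equality will be the technical input to the network/6V-DWBC interpretation carried out in Section 5, leading eventually to the proof of Theorem \ref{main}.
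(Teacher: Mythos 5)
Your proof is correct and is exactly the argument the paper intends: the corollary follows immediately from the matrix identity $\Theta_{min}(n)=\Theta_{max}(n)$ of Theorem \ref{identet}, since equal matrices have equal minors. The paper gives no separate proof precisely because all the work is in the theorem, as you note.
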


Our last task is to relate the central value $T_{0,0,n}=u_{0,0}$ to the matrix $\Theta_{min}(n)$.
We have the following:

\begin{thm}\label{soltsys}
Notations are as in Theorem \ref{identet}.
The solution $T_{0,0,n}$ of the $T$ system with coefficients \eqref{inhomtsys} 
is expressed in terms of its initial data $t_{i,j}$ as:
$$T_{0,0,n}=a_{min}(n) \prod_{i=2-n}^{-1} t_{i,1-n-i}^{-1}\prod_{i=2-n}^{0}t_{i,n-1+i} $$
\end{thm}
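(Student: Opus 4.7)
The plan is to exploit Corollary~\ref{identa} together with the triangular factorization of $\Theta_{max}(n)$ to compute its principal minor explicitly. Inspection of the tiling $\theta_{max}(n)$ (with its opposite-color boundary assignment) shows that the matrix product $\Theta_{max}(n)$, read from left to right, consists of all the $U, U'$-matrices followed by all the $V, V'$-matrices, as illustrated in the $n=3$ example, the separation occurring along the vertical line $j=0$ through the diamond. Writing $\Theta_{max}(n)=L\cdot R$, the factor $L$ is a product of $U_j, U'_j$ matrices, each of which is globally lower triangular once embedded in $SL_{2n-2}$, while $R$ is a product of $V_j, V'_j$ matrices, each globally upper triangular. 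Hence $L$ is lower triangular and $R$ is upper triangular.

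Using this factorization, the Cauchy--Binet expansion of the $(n-1)\times(n-1)$ principal minor of $L\,R$ is dominated by the column set $[1,n-1]$, since any lower-triangular submatrix $L_{[1,n-1],S}$ with $S\not\subseteq[1,n-1]$ contains a zero column. This gives
$$a_{max}(n) \;=\; \det L_{[1,n-1]}\cdot \det R_{[1,n-1]} \;=\; \prod_{k=1}^{n-1} L_{kk}\,R_{kk}.$$
Each $L_{kk}$ collects the non-trivial local $(2,2)$ entries $a/b$ (or $\mu a/b$) from every $U_{k-1}$ or $U'_{k-1}$ in $L$, while $R_{kk}$ collects the local $(1,1)$ entries from every $V_k$ or $V'_k$ in $R$. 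Substituting the vertex assignment $u_{i,j}=T_{i,j,n-|i|-|j|}$ of Theorem~\ref{identet} renders $a_{max}(n)$ as an explicit rational monomial in the variables $u_{i,j}$.

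The core technical step is to exhibit the massive telescoping in this rational monomial. Within a single $L_{kk}$ the contributions from consecutive $U$-matrices at position $k-1$ telescope along a vertical slice of triangles (for example $(u_{0,-2}/u_{0,-1})\cdot(u_{0,-1}/u_{0,0})=u_{0,-2}/u_{0,0}$ in the $n=3$ computation), and further cross-cancellations occur between $L_{kk}$ and the adjacent $R_{k'k'}$. After all cancellations, the only surviving factors are the central value $u_{0,0}=T_{0,0,n}$ together with boundary vertices on the SW edge $\{(j,i)\in D_n(0,0): j<0,\,i<0,\,|j|+|i|=n-1\}$ (in the numerator) and on the SE edge $\{(j,i)\in D_n(0,0): j>0,\,i\leq 0,\,|j|+|i|=n-1\}$ (in the denominator). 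The $n=3$ case confirms this: one finds $a_{max}(3)=u_{-1,-1}u_{0,0}/(u_{-1,1}u_{0,2})$, with the unique interior vertex $u_{-1,0}$ canceling between $L_{22}$ and $R_{11}$.

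Finally, for any boundary vertex with $|i|+|j|=n-1$ one has $i+j\equiv n-1\pmod 2$, hence $\epsilon_{i,j,n}=1$ and $u_{i,j}=T_{i,j,1}=t_{i,j}$. Parametrizing the surviving SW vertices as $(j,i)=(1-n-i,i)$ for $i=2-n,\ldots,-1$ and the SE vertices as $(j,i)=(n-1+i,i)$ for $i=2-n,\ldots,0$, and invoking Corollary~\ref{identa} in the form $a_{min}(n)=a_{max}(n)$, one obtains the stated identity. The main obstacle is the bookkeeping in the telescoping step: one must verify for general $n$ that the interior cancellations occur precisely as claimed and that the surviving boundary vertices on the SW and SE edges correspond exactly to the two products appearing in the theorem.
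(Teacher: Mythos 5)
Your proposal is correct and follows essentially the same route as the paper's proof: it uses Corollary \ref{identa}, the lower/upper triangular factorization of $\Theta_{max}(n)$ into its $U$-half and $V$-half, reduces the principal minor to the product of diagonal entries of the truncated factors, and telescopes the resulting ratios of vertex values down to the central value $u_{0,0}$ times boundary values that equal the initial data $t_{i,j}$. The only difference is cosmetic (Cauchy--Binet instead of direct truncation of the factors), and the "bookkeeping" you flag is handled in the paper by the observation that the relevant diagonal entries are ratios of adjacent vertex values along horizontal edges of a fixed row of the tiling, so each row telescopes to (leftmost)/(rightmost).
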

\begin{proof}
Let us compute $a_{max}(n)$. 
We note that, as $U_i,U'_i$ are lower triangular and $V_i,V_i'$ are upper triangular, the $2n-2\times 2n-2$ matrix
$\Theta_{max}(n)$ is the product of a lower triangular $\mathcal L$ matrix by an upper one 
$\mathcal U$, corresponding 
respectively to the left and right halves of ${\tilde \theta}_{max}(n)$. More precisely, all matrices in the 
SW corner are of $U_i$ type ($i\leq n-2$), all matrices in the NW corner are of $U'_i$ type ($n-1\leq i\leq 2n-3$),
all matrices in the SE corner are of $V'_i$ type ($i\leq n-1$) and all matrices in the NE corner are of $V_i$ type
($n\leq i\leq 2n-3$).

Let $M_{max}(n)$ be the truncation of $\Theta_{max}(n)$ to its
$n-1$ first rows and columns.
From the triangularity property, we have $M_{max}(n)=\ell u$
where $\ell$ and $u$ are the truncations of $\mathcal L$ and $\mathcal U$ to their $n-1$ first rows and columns,
so that $a_{max}(n)=\det(\ell)\det(u)$. By the triangularity property, and the above remark on
corners, $\det(\ell)$ is the product over all diagonal
matrix elements of the $U_i$ in $\Theta_{max}(n)$, while
$\det(u)$ is the product over all diagonal matrix elements of the $V_i'$ in $\Theta_{max}(n)$.
These in turn have a very simple interpretation in terms of the tilings: the diagonal matrix elements of $U_i$
($i\leq n-2$) are the ratios (left vertex value)/(right vertex value) along the horizontal edges of the tiling (here in the
strict SW corner, i.e. with $2-n\leq i<0$ and $1-n\leq j\leq 0$. The product over these is telescopic
and leaves us with only the ratios of (leftmost vertex value)/(rightmost vertex value)=$u_{i,i+1-n}/u_{i,0}$
along the row $i$.
Analogously, the diagonal matrix elements of $V_i'$ ($i\leq n-1$) have the same interpretation,
and we get the telescopic products $u_{i,0}/u_{i,n-1+i}$ along the row $i$. Collecting both products, we get
$$a_{max}(n)=\det(\ell)\det(u)
=\prod_{i=1-n}^{-1} \frac{u_{i,1-n-i}}{u_{i,0}} \prod_{i=2-n}^{0}\frac{u_{i,0}}{u_{i,n-1+i}}= u_{0,0} 
\prod_{i=2-n}^{-1} u_{i,1-n-i}\prod_{i=1-n}^{0}u_{i,n-1+i}^{-1} $$
Finally, we note that the SW and SE boundary values of $u_{i,j}$ are all of the form $T_{i,j,1}$ and therefore
are equal to the corresponding $t_{i,j}$ variables. The theorem then follows from Corollary \ref{identa},
allowing to substitute $a_{min}(n)$ for $a_{max}(n)$.
\end{proof}

\subsection{A determinant formula for the generalized Lambda-determinant}

To get a formula for the generalized Lambda-determinant, we simply have to combine the results
of Theorems \ref{identet} and \ref{soltsys}. Our generalized Lambda-determinant is the solution
$T_{0,0,n}$ of the $T$-system with coefficients subject to the initial conditions \eqref{initdat}.
It corresponds to the initial values:
\begin{equation}\label{condinit} t_{i,j}=\left\{\begin{matrix}  1 & {\rm if} \, i+j+n=0\, {\rm mod}\, 2\\
a_{\frac{j-i+n+1}{2},\frac{j+i+n+1}{2}} & {\rm otherwise} 
\end{matrix}\right. 
\end{equation}
We assume from now on that the initial data $t_{i,j}$ is chosen as in \eqref{condinit}.
With the same notations as in the previous section, we obtain:

\begin{thm}\label{ladet}
The generalized Lambda-determinant of any matrix $A=(a_{i,j})_{1\leq i,j\leq n}$ reads:
$$ \vert A\vert_{\lambda,\mu} = a_{min}(n) \prod_{i=2}^{n}a_{i,1}^{-1}\prod_{i=1}^{n}a_{n,i}$$
As such, it is a Laurent polynomial of the $a_{i,j}$'s with coefficients in 
$\Z_+[(\lambda_i)_{i\in J},(\mu_i)_{i\in J}]$, where $J=\{1-n,2-n,...,n-1\}$.
\end{thm}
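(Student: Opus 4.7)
The plan is to obtain the formula by directly combining Theorems \ref{identet} and \ref{soltsys} with the specific initial conditions \eqref{condinit} that encode the matrix $A$. First I would instantiate Theorem \ref{soltsys}, which gives
$$T_{0,0,n} = a_{min}(n) \prod_{i=2-n}^{-1} t_{i,1-n-i}^{-1} \prod_{i=2-n}^{0} t_{i,n-1+i},$$
and then translate the boundary $t$-variables into matrix entries of $A$ via the substitution rule of \eqref{condinit}.

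The key bookkeeping step is to check the parity condition on each boundary product. Along the first product one has $i+j = 1-n$, and along the second $i+j = n-1$; in both cases $i+j+n$ is odd, so every boundary $t_{i,j}$ falls in the non-trivial branch of \eqref{condinit} and equals some $a_{r,s}$ rather than the placeholder $1$. A direct calculation gives $t_{i,1-n-i} = a_{1-i,\,1}$ for $i \in \{2-n,\ldots,-1\}$ and $t_{i,n-1+i} = a_{n,\,n+i}$ for $i \in \{2-n,\ldots,0\}$. Reindexing these yields $\prod_{r=2}^{n-1} a_{r,1}^{-1}$ and $\prod_{s=2}^{n} a_{n,s}$ respectively. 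The announced form $\prod_{i=2}^{n} a_{i,1}^{-1}\prod_{i=1}^{n}a_{n,i}$ differs by the pair $a_{n,1}^{-1}$ and $a_{n,1}$, which cancel, so the two expressions are identical; rewriting in this symmetric shape is just a cosmetic choice.

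For the Laurent polynomial claim, I would invoke Theorem \ref{inhomclust}: the cluster algebra Laurent property already guarantees that $T_{0,0,n}$ is a Laurent polynomial in the initial cluster variables (and therefore in the $a_{i,j}$'s) with coefficients in $\mathbb{Z}[(\lambda_a)_{a\in J},(\mu_a)_{a\in J}]$. It remains to upgrade this to $\mathbb{Z}_+$. Here I would observe that by inspection the matrices $U_i, V_i, U_i', V_i'$ have entries which are non-negative Laurent monomials in the $t_{i,j}, \lambda_a, \mu_b$, and that $a_{min}(n)$ is a particular minor of a product of such triangular matrices. Positivity then follows from the standard interpretation of minors of products of non-negative triangular matrices as Lindström--Gessel--Viennot weighted sums over non-intersecting lattice path configurations on the network dual to the tiling ${\tilde\theta}_{min}(n)$.

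The main obstacle is precisely this positivity claim: the Laurent property alone does not give it, and one genuinely needs the combinatorial (network) model for $a_{min}(n)$. This model is exactly what Section 5 develops in order to prove Theorem \ref{main}, so in the proof I would either defer the $\mathbb{Z}_+$ statement to that section or give a short argument based on the LGV lemma applied to the network read off from ${\tilde\theta}_{min}(n)$. The rest of the argument is purely an index-manipulation exercise combining two already-established identities.
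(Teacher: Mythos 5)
Your proposal is correct and follows essentially the same route as the paper: combine Theorems \ref{identet} and \ref{soltsys} with the initial data \eqref{condinit}, translate the boundary $t_{i,j}$ into entries of $A$ (your index computation, including the cancelling pair $a_{n,1}^{\pm 1}$, checks out), and obtain positivity of the coefficients from the monomial structure of the $U,V,U',V'$ matrices via the network/Lindstr\"om--Gessel--Viennot interpretation of $a_{min}(n)$. The paper's own proof is just a terser version of exactly this argument.
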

\begin{proof}
By direct application of Theorems \ref{identet} and \ref{soltsys}, with the initial conditions
\eqref{condinit}.
As $a_{min}(n)$ is a principal minor of a product of $U_i,U'_i,V_i,V'_i$ matrices, 
and as the entries of all the matrices are Laurent monomials of the $t_{i,j}$'s
and monomials of the $\lambda_i$'s and $\mu_j$'s, we recover the Laurent polynomiality
property of cluster algebras with coefficients, namely that the solution $T_{0,0,n}$
is a Laurent polynomial of the entries of $A$, with coefficients that are polynomials of the 
$\lambda_i$'s and $\mu_j$'s. Moreover, from the form of the matrices, these polynomial 
coefficients have themselves non-negative integers coefficients. The theorem follows.
\end{proof}

\section{From networks to 6V model}

\subsection{Network formulation}

The matrices $V,V',U,U'$ may be interpreted as elementary ``chips" that may be used to build
networks, i.e. directed graphs with weighted edges. Each non-zero matrix element $m_{i,j}$ of $V,V',U,U'$
is interpreted as a the weight of a directed edge connecting an entry vertex $i$ to an exit vertex $j$. This
gives rise to the four following ``chips":
\begin{eqnarray}
V(d,a,b;\lambda,\mu)&=& 
\raisebox{-1.2cm}{\hbox{\epsfxsize=6.cm \epsfbox{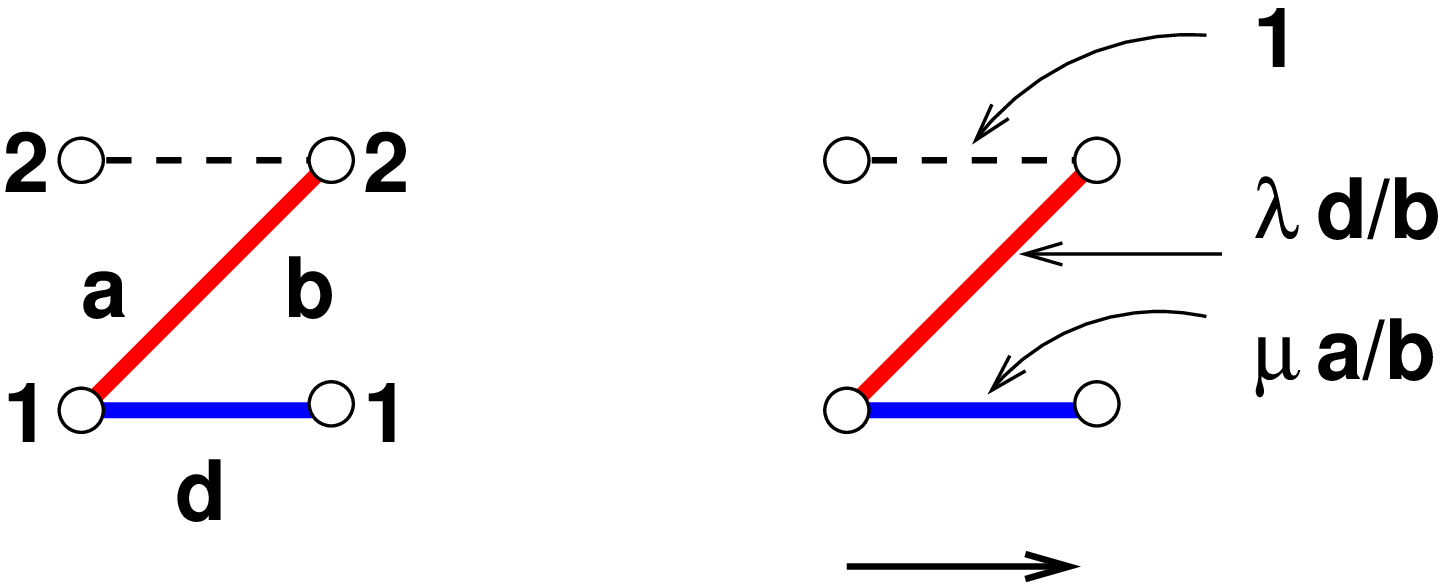}}}\nonumber \\
V'(d,a,b;\lambda,\mu)&=&
\raisebox{-1.2cm}{\hbox{\epsfxsize=6.cm \epsfbox{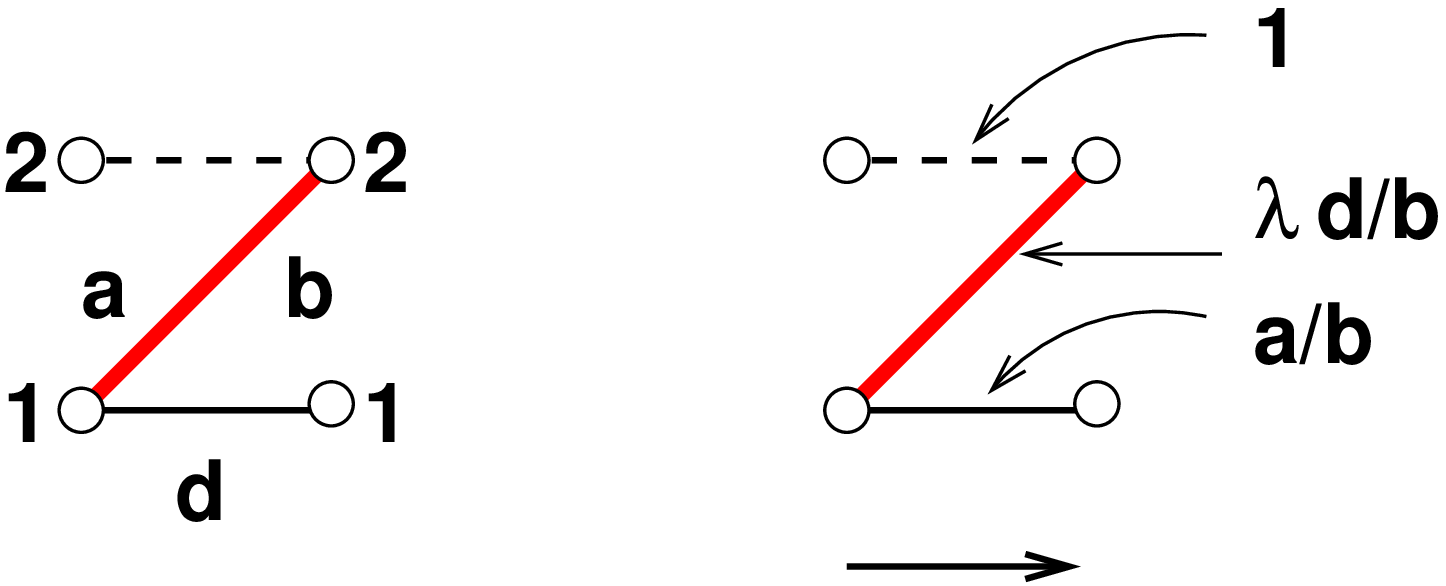}}}\nonumber \\
U(a,b,c;\lambda,\mu)&=&
\raisebox{-1.2cm}{\hbox{\epsfxsize=6.cm \epsfbox{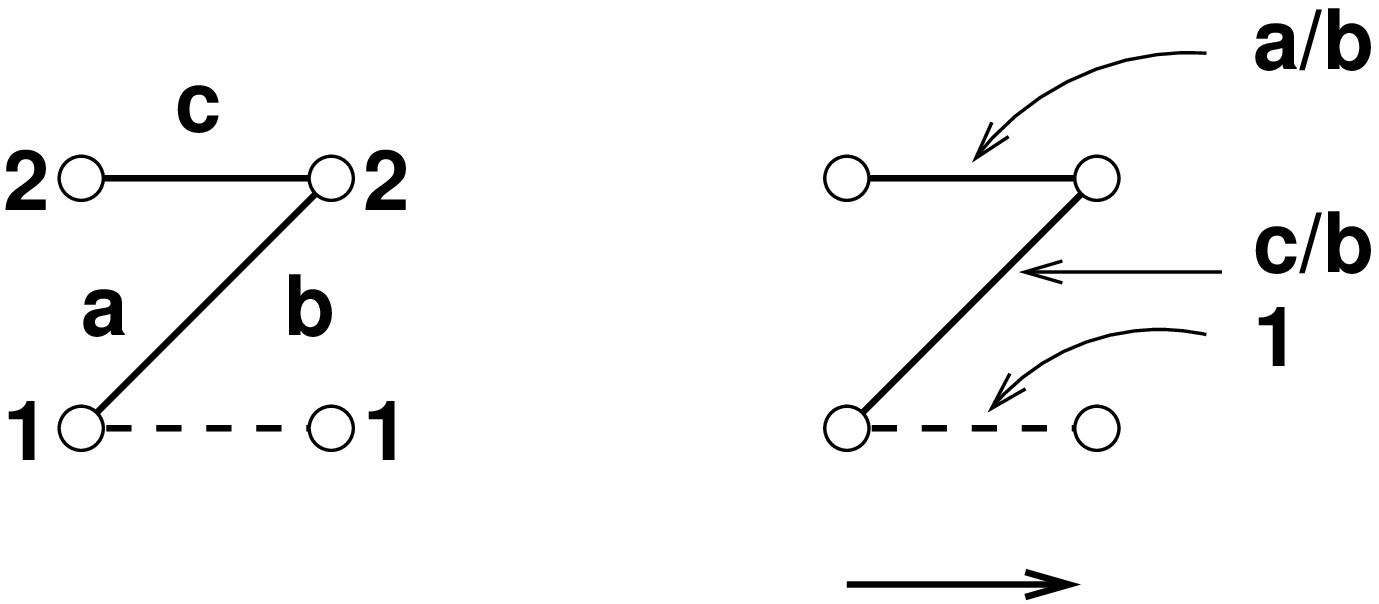}}}\nonumber \\
U'(a,b,c;\lambda,\mu)&=&
\raisebox{-1.2cm}{\hbox{\epsfxsize=6.cm \epsfbox{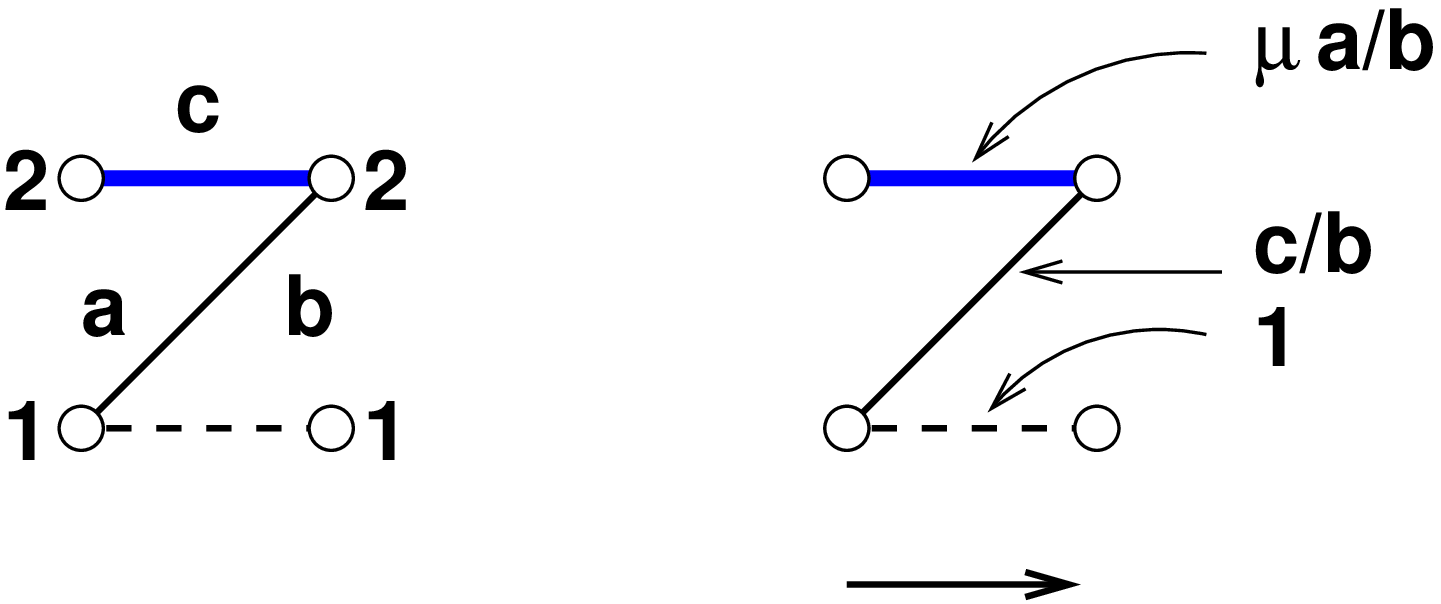}}}\label{chipsrule}
\end{eqnarray}
where we have represented in dashed black lines the edges with weight $1$, and
in thickened red (resp. blue) edges the diagonals (resp. horizontals)
carrying an extra multiplicative weight $\lambda$ (resp. $\mu$).
The $SL_r$ embedding $M_i$ is clear: we place the chip for $M$ in positions $i,i+1$
and complete the graph by horizontal edges $j\to j$ with weight $1$, for $j\neq i,i+1$.
The product of two matrices correspond in this language to the concatenation of the graphs,
namely the identification of the exit points of the left matrix graph with the entry points 
of the right matrix graph. Graphs obtained by concatenation of elementary chips are called networks.
Note that in the present case networks have face labels, that determine all the edge weights via the
rules \eqref{chipsrule}.

The matrix of the network is the product of the matrices of the chips forming it. The matrix element
$(i,j)$ of this product is the sum over all directed paths on the network with entry $i$ and exit $j$ of the
product of the weights of the traversed edges.

We may now represent the networks $G_{min}(n)$ and $G_{max}(n)$ corresponding 
to the matrices $\Theta_{min}(n)$ and $\Theta_{max}(n)$. For $n=5$, we have (recall that dashed black
lines have weight $1$, while thickened blue (resp. red) lines have an extra weight $\mu$ (resp. $\lambda$)):
$$G_{min}(5)=\raisebox{-2.cm}{\hbox{\epsfxsize=12.cm \epsfbox{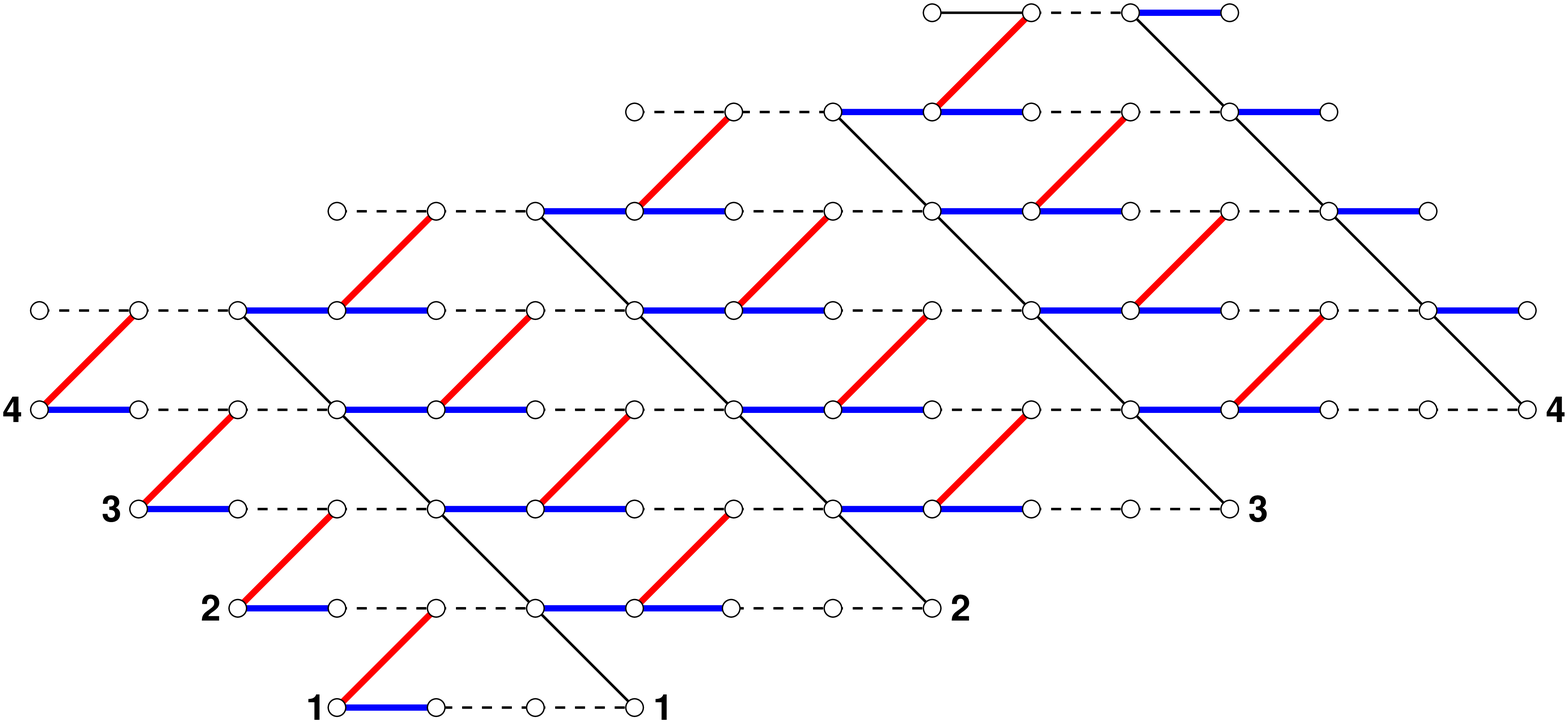}}}$$
and
$$G_{max}(5)=\raisebox{-2.cm}{\hbox{\epsfxsize=12.cm \epsfbox{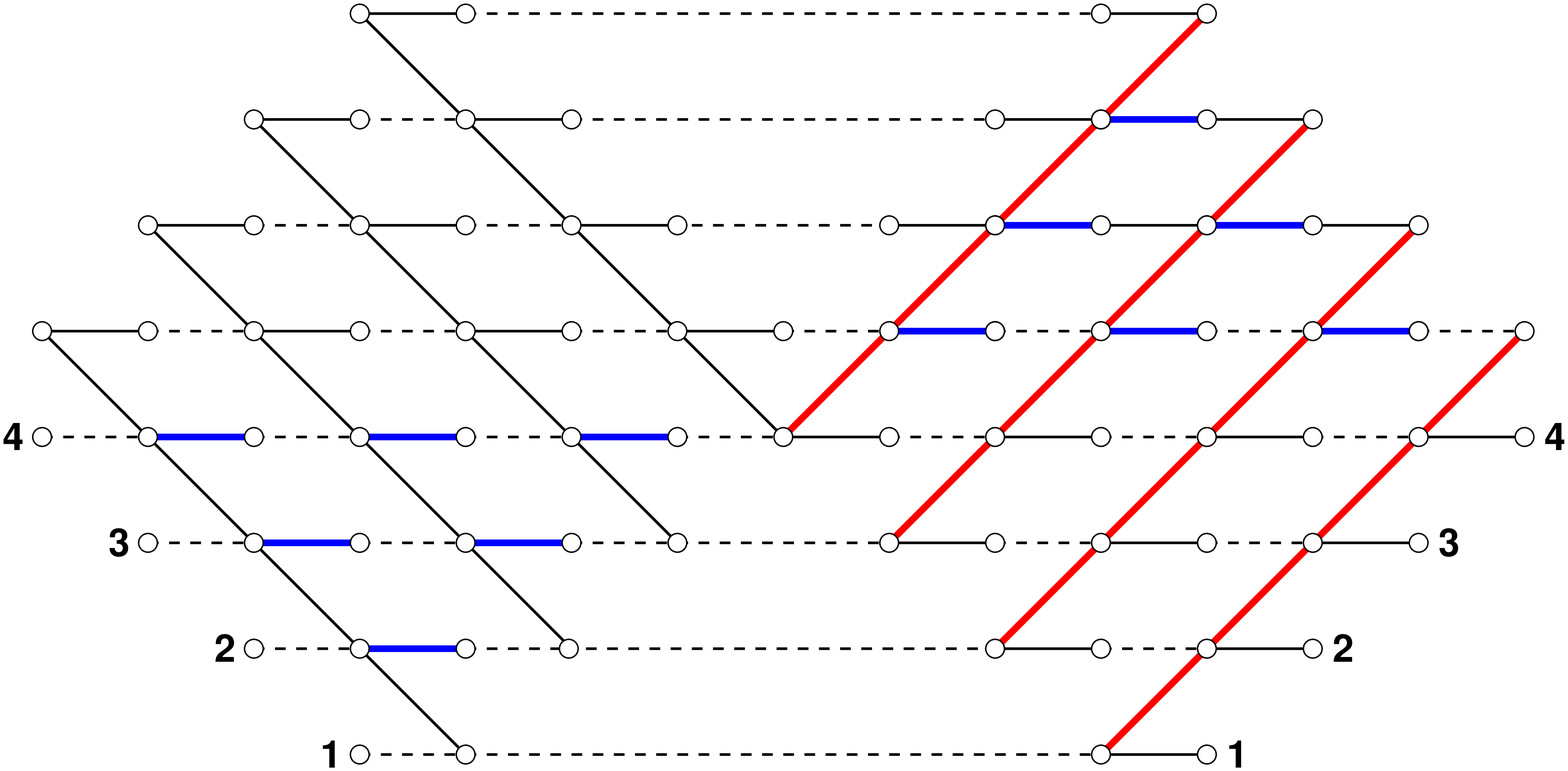}}}$$

We have:

\begin{lemma}\label{aminmax}
The function $a_{min}(n)$ (resp. $a_{max}(n)$) is the partition function for families
of non-intersecting paths that start at the entry points $1,2,...,n-1$ and end at the exit points $1,2,...,n-1$
of the network $G_{min}(n)$ (resp. $G_{max}(n)$).
\end{lemma}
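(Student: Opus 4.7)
The plan is a direct application of the Lindström--Gessel--Viennot (LGV) lemma. The first step is to note that, from the chip rules \eqref{chipsrule} and the fact that network concatenation corresponds to matrix multiplication, the matrix entry $(\Theta_{min}(n))_{i,j}$ (and likewise $(\Theta_{max}(n))_{i,j}$) equals the sum over directed paths in $G_{min}(n)$ (resp.\ $G_{max}(n)$) from entry vertex $i$ on the left boundary to exit vertex $j$ on the right boundary, weighted by the product of the edge weights. This is the standard ``networks of matrices" interpretation already used in the text to define the chips.

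Next, expanding the principal $(n-1)\times(n-1)$ minor $a_{min}(n)$ by the Leibniz formula and substituting the path interpretation, one obtains
\[
a_{min}(n)=\sum_{\sigma\in S_{n-1}}\sgn(\sigma)\sum_{(P_1,\dots,P_{n-1})}\prod_{i=1}^{n-1}w(P_i),
\]
where the inner sum runs over tuples of paths with $P_i$ going from entry $i$ to exit $\sigma(i)$, and $w(P_i)$ denotes the product of the weights along $P_i$. The same expression with $\Theta_{max}(n)$ and $G_{max}(n)$ holds for $a_{max}(n)$.

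The key structural step is to check that both $G_{min}(n)$ and $G_{max}(n)$ are planar acyclic directed graphs, with the entry vertices $1,\dots,n-1$ appearing on the left boundary in the same vertical order as the exit vertices $1,\dots,n-1$ on the right boundary. Each of the four chips in \eqref{chipsrule} is manifestly planar and strictly oriented left-to-right, and the rule of embedding $M\mapsto M_i$ into $SL_{2n-2}$ together with concatenation along the square-triangle tiling preserves both planarity and acyclicity. Inspection of the explicit pictures of $G_{min}(5)$ and $G_{max}(5)$ confirms the matching orderings at the boundary; the argument for general $n$ is identical since the boundary chips are just those at the outer rows of the tilings $\widetilde\theta_{min}(n)$ and $\widetilde\theta_{max}(n)$.

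With planarity and acyclicity in hand, the LGV lemma applies. Any family of paths that contains an intersection is paired off by the sign-reversing involution that swaps the two tails beyond the first (lexicographically smallest) intersection point, so these contributions cancel in the signed sum. The surviving contributions are the non-intersecting path families, and by the compatibility of the orderings of entries and exits, such families necessarily correspond to $\sigma=\mathrm{id}$ with positive sign. This identifies $a_{min}(n)$ (resp.\ $a_{max}(n)$) with the sought partition function of non-intersecting path families. The only mildly nontrivial check is the planarity and boundary-order statement, which is however immediate from the explicit form of the chips and the uniform structure of the tilings.
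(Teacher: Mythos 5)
Your proposal is correct and follows essentially the same route as the paper, which proves the lemma by direct appeal to the Lindström--Gessel--Viennot theorem applied to the principal $(n-1)\times(n-1)$ minors of the network matrices $\Theta_{min}(n)$ and $\Theta_{max}(n)$. You simply spell out the standard ingredients (Leibniz expansion, planarity/acyclicity, the sign-reversing involution, and the reduction to $\sigma=\mathrm{id}$) that the paper leaves implicit in its one-line citation.
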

\begin{proof}
By direct application of the Lindstr\"om-Gessel-Viennot Theorem \cite{LGV1,LGV2}, as $a_{min}(n)$ (resp. $a_{max}(n)$)
are the $n-1\times n-1$ principal minors of $\Theta_{min}(n)$ (resp. $\Theta_{max}(n)$), the matrices of the networks
$G_{min}(n)$ (resp. $G_{max}(n)$).
\end{proof}


\begin{remark}
Note that due to the structure of $G_{max}(n)$, there is a unique configuration of $n-1$ non-intersecting
paths contributing to $a_{max}(n)$, namely that in which all the paths have only horizontal steps.
This gives an alternative pictorial interpretation of the proof of Theorem \ref{soltsys}.
\end{remark}

In view of Theorem \ref{ladet}, let us now reformulate $a_{min}(n)$. To this effect, we will simplify slightly the
network $G_{min}(n)$ by use of the following:

\begin{lemma}\label{equival}
We have the following equivalence between networks corresponding to the matrix product 
$U_{i-1}(a,b,c;1,1)V_{i}(b,c,d;\lambda,\mu)$:
$$  \raisebox{-1.8cm}{\hbox{\epsfxsize=10.cm \epsfbox{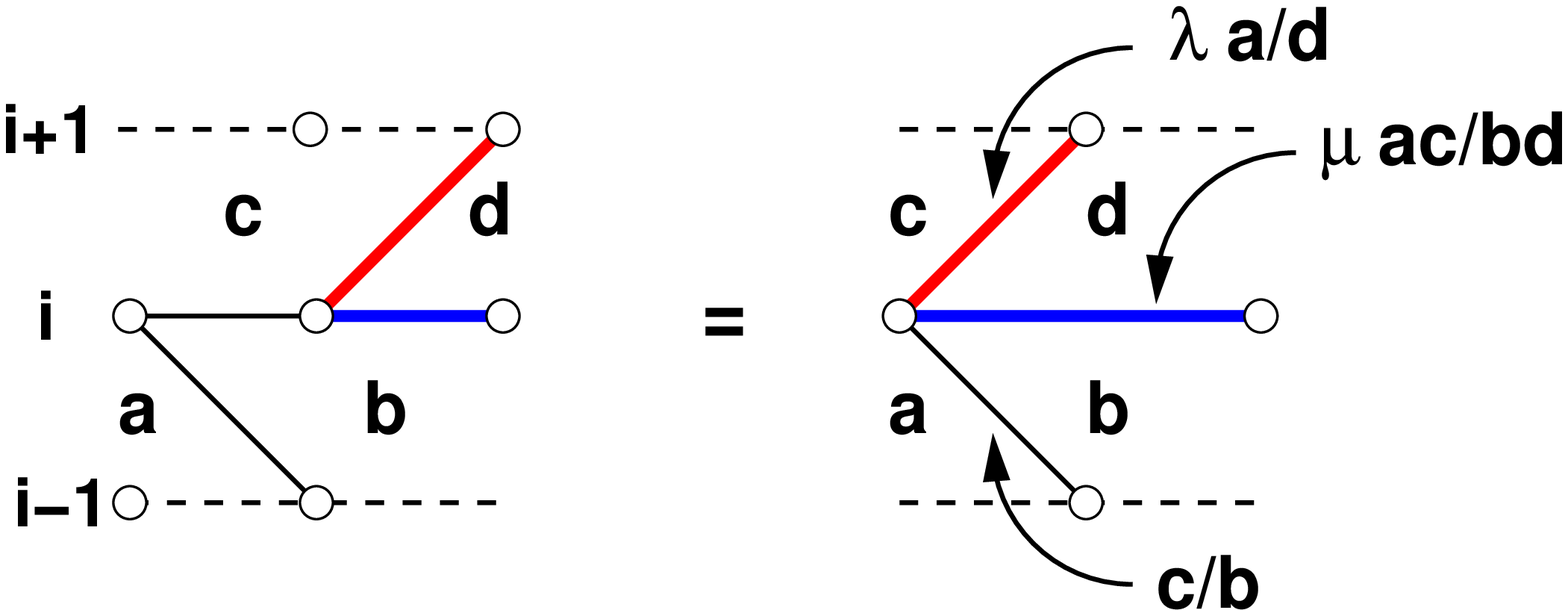}}}  \, ,$$
where we have indicated the new edge weights (and kept the color code for thickened edges
receiving extra multiplicative weights $\mu$ (blue) and $\lambda$ (red).
\end{lemma}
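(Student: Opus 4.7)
\textbf{Proof plan for Lemma \ref{equival}.} The statement is an equality between the matrices attached to two networks, so by the definition of the matrix of a network (the product of the chip matrices), it suffices to verify the identity on the level of $SL_{2n-2}$ matrices obtained by multiplying $U_{i-1}(a,b,c;1,1)$ and $V_i(b,c,d;\lambda,\mu)$. Since both matrices act as the identity outside of the $3\times 3$ sub-block on rows/columns $i-1, i, i+1$, the whole computation reduces to a single $3\times 3$ matrix product. My plan is therefore to perform this product explicitly, read off the non-zero entries as the edges (with their weights) of a directed graph on three entry and three exit vertices, and verify that this is exactly the decorated network drawn on the right-hand side of the displayed equivalence.

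The first step is the explicit multiplication. Substituting into the definitions of $U$ and $V$, I would write
\[
U_{i-1}(a,b,c;1,1)=\begin{pmatrix} 1 & 0 & 0 \\ c/b & a/b & 0 \\ 0 & 0 & 1 \end{pmatrix},\qquad
V_i(b,c,d;\lambda,\mu)=\begin{pmatrix} 1 & 0 & 0 \\ 0 & \mu c/d & \lambda b/d \\ 0 & 0 & 1 \end{pmatrix},
\]
in the $3\times 3$ block indexed by $i-1,i,i+1$, and compute
\[
U_{i-1}V_i=\begin{pmatrix} 1 & 0 & 0 \\ c/b & \mu a c/(bd) & \lambda a/d \\ 0 & 0 & 1 \end{pmatrix}.
\]
Thus the only non-trivial row is row $i$, with three non-vanishing entries of weights $c/b$, $\mu ac/(bd)$ and $\lambda a/d$, corresponding respectively to a descending diagonal edge from input $i$ to output $i-1$, a horizontal edge from input $i$ to output $i$, and an ascending diagonal edge from input $i$ to output $i+1$. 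The remaining rows contribute two unweighted horizontal edges at levels $i-1$ and $i+1$.

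The second step is to match this data with the graph on the right of the stated equivalence. The collapsed network has the input $i$ fanning out along three outgoing edges meeting at output vertices $i-1,i,i+1$, together with two trivial horizontal edges at the neighbouring levels; by the color convention, the blue thickening carries the factor $\mu$ and the red thickening carries the factor $\lambda$, so the three weights $c/b$, $\mu ac/(bd)$, $\lambda a/d$ are precisely recovered. Comparison with the LHS network, whose matrix is the same by the preceding computation, yields the equivalence at the level of weighted transfer matrices.

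The only subtle point, and what I would regard as the main thing to be careful about, is bookkeeping of the decorations: one must check that the thickening (blue for $\mu$, red for $\lambda$) on the simplified picture sits on the correct edges, i.e.\ on the horizontal edge that absorbed the $\mu$ from the $V$ chip and on the ascending diagonal that absorbed its $\lambda$, while the descending diagonal inherited from the $U$ chip carries no extra coefficient. Once the coefficients are placed consistently with the face labels $a,b,c,d$ assigned to the quadrants around the central face, the equivalence of Lemma \ref{equival} follows immediately from the above $3\times 3$ computation; no Lindström--Gessel--Viennot argument is needed since both sides are already read as linear maps on the three relevant levels.
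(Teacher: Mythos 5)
Your computation is correct: with the paper's conventions $U(a,b,c;1,1)=\bigl(\begin{smallmatrix}1&0\\ c/b&a/b\end{smallmatrix}\bigr)$ and $V(b,c,d;\lambda,\mu)=\bigl(\begin{smallmatrix}\mu c/d&\lambda b/d\\ 0&1\end{smallmatrix}\bigr)$, the $3\times3$ product does have single nontrivial row $\bigl(c/b,\ \mu ac/(bd),\ \lambda a/d\bigr)$, which is exactly the three-edge fan of the collapsed chip with the $\mu$ on the horizontal (blue) edge and the $\lambda$ on the ascending diagonal (red) edge. This is precisely the direct verification the paper intends (it states the lemma without proof, in the same spirit as the "easily checked by direct calculation" Lemmas 4.1 and 4.2), so your approach matches the paper's.
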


Applying Lemma \ref{equival} to all the pieces corresponding to products $U_{i-1}V_i$
in the network $G_{min}(n)$ allows to transform it into a 
new network ${\tilde G}_{min}(n)$ with the same network matrix, but which is now a subset 
of the directed triangular lattice (with edges oriented from left to right). For $n=4$ this gives:
$$ {\tilde G}_{min}(4)= \raisebox{-3.cm}{\hbox{\epsfxsize=7.cm \epsfbox{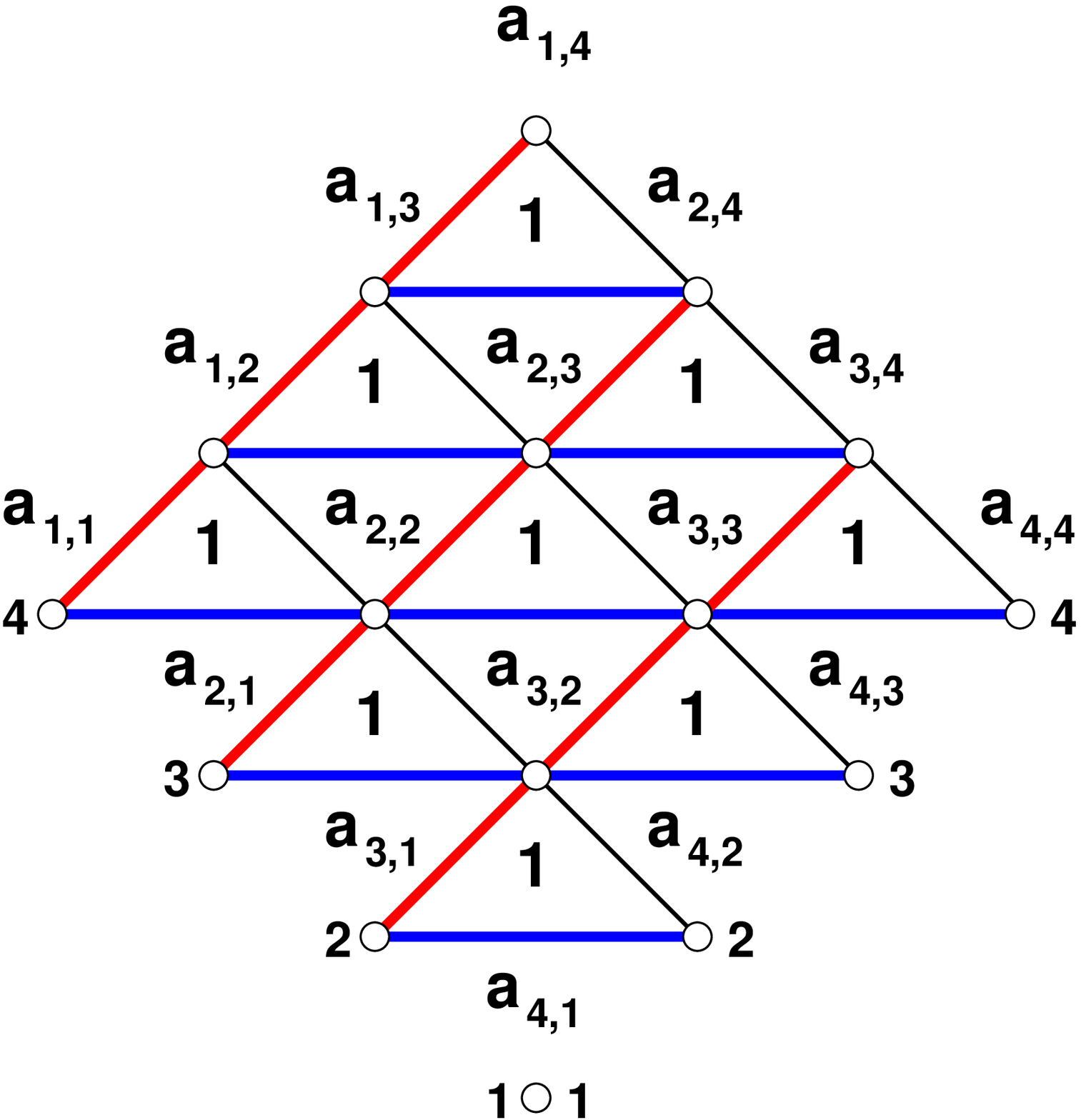}}} $$
where we have indicated the face variables corresponding to the initial data \eqref{initdat}.
Note that the face labels of all up-pointing triangles are $1$, whereas the down-pointing ones 
are the matrix elements of $A$.  Note also that for technical reasons we have included a 
bottom isolated vertex. The matrix associated with this network, ${\tilde\Theta}_{min}(n)$, 
has therefore size $2n-1\times 2n-1$.
The entry point $1$ is trivially identified with the exit point $1$.

The edge weights read as follows:
\begin{equation}\label{edgeweights} \raisebox{-1.8cm}{\hbox{\epsfxsize=14.cm \epsfbox{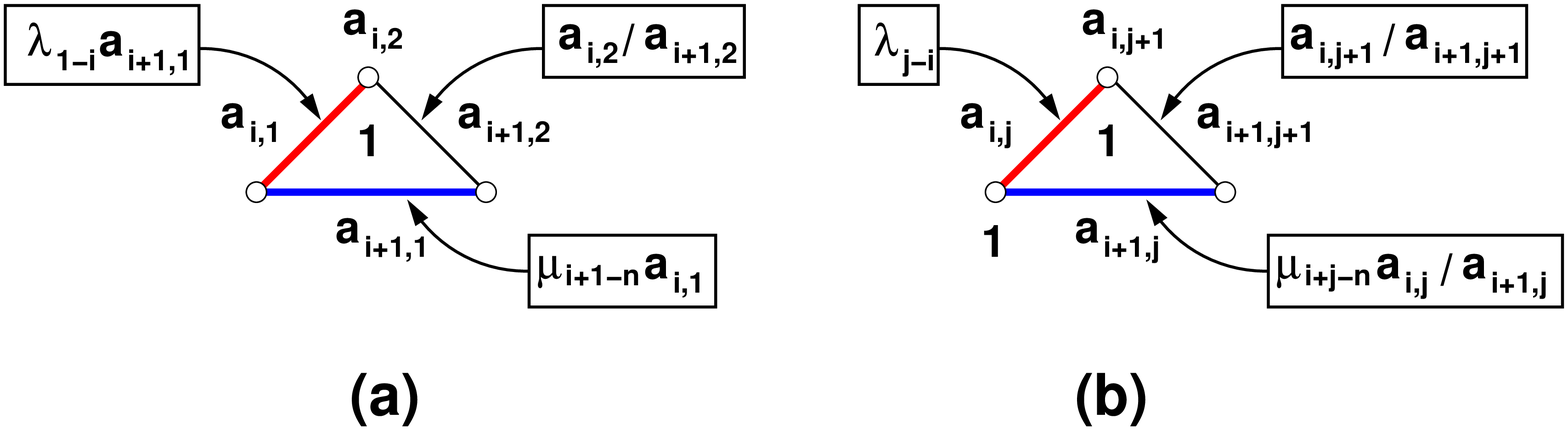}}} 
\end{equation}
depending on whether the triangle with face label $a_{i,j}$ is on the SW border (a) for $j=1$ or anywhere else 
(b) for $j>1$. In view of
the formula of Theorem \ref{ladet}, it is natural to absorb the prefactor $\prod_{i=2}^n a_{i,1}^{-1}$
into a redefinition of the network ${\tilde G}_{min}$. Indeed, dividing the weights 
of the thickened colored edges in $(a)$ by $a_{i+1,1}$, for $i=1,2,...,n-1$ exactly absorbs the prefactor,
as the paths starting at the left vertex must either traverse the red or the blue adjacent edge.
After this transformation, all the weights have the form $(b)$ of \eqref{edgeweights}
for $1\leq i,j\leq n-1$, including the ones 
on the SW border. Denoting by ${\hat G}_{min}(n)$ the network thus modified, and by ${\hat a}_{min}(n)$ 
the $n\times n$ principal minor of the associated matrix ${\hat \Theta}_{min}(n)$,
we have the following reformulation of the generalized Lambda-determinant:
\begin{thm}\label{latlam}
The generalized Lambda-determinant of $A$ is given by:
$$ \vert A\vert_{\lambda,\mu}= {\hat a}_{min}(n)\, \prod_{i=1}^n a_{n,i} $$
As such, it is the partition function of families of $n$ non-intersecting paths on the 
(directed triangular lattice) network ${\hat G}_{min}(n)$,
starting from the $n$ SW vertices and ending at the $n$ SE vertices, multiplied by all the entries of the last
row of $A$.
\end{thm}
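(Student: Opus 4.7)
The plan is to deduce the statement from Theorem \ref{ladet} by carrying out the network simplifications sketched in the paragraphs preceding it. I start with the identity $|A|_{\lambda,\mu} = a_{min}(n)\prod_{i=2}^n a_{i,1}^{-1} \prod_{i=1}^n a_{n,i}$ given by Theorem \ref{ladet}, and replace each local $U_{i-1}V_i$ block of $G_{min}(n)$ by its equivalent via Lemma \ref{equival}. Since each local substitution preserves the product of the corresponding chip matrices, the full network matrix is unchanged, so $a_{min}(n)$ coincides with the corresponding principal minor of the matrix of ${\tilde G}_{min}(n)$. The extra isolated bottom vertex contributes only a trivial $1\to 1$ path of weight $1$ that decouples from the rest of the graph, so after the natural index shift this minor becomes the $n\times n$ principal minor of ${\tilde \Theta}_{min}(n)$.

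Next, I absorb the prefactor $\prod_{i=2}^n a_{i,1}^{-1}$ into the edge weights. The key combinatorial observation is that every family of non-intersecting paths from the SW entries to the SE exits of ${\tilde G}_{min}(n)$ has the property that, for each $i \in \{1,\ldots,n-1\}$, the path starting at the $(i+1)$-th SW vertex leaves that vertex along exactly one of the two thickened colored (red or blue) edges adjacent to it. From diagram (a) of \eqref{edgeweights}, both of those edges carry $a_{i+1,1}$ as a multiplicative factor. Dividing each such weight by $a_{i+1,1}$ therefore reduces the contribution of every admissible family by exactly $\prod_{i=1}^{n-1} a_{i+1,1} = \prod_{i=2}^n a_{i,1}$. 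With ${\hat G}_{min}(n)$ and ${\hat a}_{min}(n)$ defined by this rescaling, we obtain ${\hat a}_{min}(n) = a_{min}(n)\prod_{i=2}^n a_{i,1}^{-1}$, and substituting into Theorem \ref{ladet} yields the claimed closed-form expression.

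The partition function interpretation then follows from the Lindstr\"om-Gessel-Viennot theorem exactly as in Lemma \ref{aminmax}: the $n\times n$ principal minor ${\hat a}_{min}(n)$ of the matrix of ${\hat G}_{min}(n)$ equals the sum over families of $n$ non-intersecting directed paths from the $n$ SW entry vertices to the $n$ SE exit vertices, weighted by products of traversed edge weights, with the path originating at the isolated bottom vertex being the trivial constant path. The main obstacle is the path-counting step in the middle argument: one must verify that every non-intersecting family visits the two relevant SW-border edges with total multiplicity exactly one at each SW vertex. This follows from the local topology of the triangular lattice at the SW border --- each such vertex has exactly two outgoing edges, both thickened in case (a), and exactly one path originates there --- so no path can skip the rescaling and none can traverse both edges, making the absorption exact.
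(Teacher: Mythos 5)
Your proposal is correct and follows essentially the same route as the paper: apply Lemma \ref{equival} to pass from $G_{min}(n)$ to the triangular-lattice network, absorb the prefactor $\prod_{i=2}^n a_{i,1}^{-1}$ into the two thickened SW-border edges at each entry vertex (exact because every path family uses exactly one of the two), and invoke Lindstr\"om--Gessel--Viennot for the partition-function interpretation. Your extra care about the isolated bottom vertex and the exactness of the absorption only makes explicit what the paper leaves implicit.
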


\subsection{Another determinant formula for the generalized Lambda-determinant}
Let us now briefly describe a method for explicitly computing the generalized Lambda-determinant of a given matrix $A$.
The idea is to generate the paths eventually contributing to the minor ${\hat a}_{min}(n)$. We consider points $(x,y)$ in the
integer plane $\Z^2$, and a Hilbert space $H$ with a distinguished canonical basis $\vert x,y\rangle$, $x,y\in\Z$,
and dual basis $\langle u,v \vert$, $u,v\in\Z$ such that $\langle u,v \vert x,y\rangle=\delta_{u,x}\delta_{v,y}$.
Let us introduce operators ${\hat z},{\hat w}$ acting on $H$ as follows:
\begin{equation} {\hat w} \vert x,y\rangle = \vert x+1,y+1\rangle \qquad {\hat z} \vert x,y\rangle = \vert x+1,y-1\rangle
\end{equation}
Consider now the embedding of the network ${\hat G}_{min}(n)$ into $\Z^2$, with vertices 
$(x,y)\in {\mathcal D}_n$, with:
$${\mathcal D}_n=\{ (x,y)\in \Z^2\,  {\rm such}\,  {\rm that}\, 
x+y=0\,  {\rm mod}\,  2\,  {\rm and}\, 
|x| +|y-n+1| \leq n-1\}
$$
Note that the bottom vertex of ${\hat G}_{min}(n)$ is now at the origin $(0,0)$.
The starting points of the paths contributing to ${\hat a}_{min}(n)$ are 
$s_i=(1-i,i-1)$, $i=1,2,...,n$ and the endpoints are $e_j=(j-1,j-1)$, $j=1,2,...,n$. 
To each vertex $(x,y)\in {\mathcal D}_n$, we attach the label of the adjacent down-pointing triangle of ${\hat G}_{min}(n)$,
$\{(x-1,y+1),(x+1,y+1),(x,y)\}$. For simplicity, we denote by $\al_{x,y}=a_{\frac{x-y}{2}+n,\frac{x+y}{2}+1}$
this label. We also attach
the parameters $\ell_y=\lambda_{y+1-n}$ and $m_x=\mu_x$ respectively corresponding to the weights of the
adjacent edges $(x,y)-(x+1,y+1)$ and $(x-1,y+1)-(x+1,y+1)$ of ${\hat G}_{min}(n)$. This leads to the natural definition of the following
diagonal operators:
$$ {\hat \al} \vert x,y \rangle = \al_{x,y} \vert x,y \rangle, \quad {\hat \ell} \vert x,y \rangle=\ell_y \vert x,y \rangle, \quad 
{\hat m} \vert x,y \rangle=m_x \vert x,y \rangle$$

Starting from a vertex $(x,y)$
there are three possibilities of steps in ${\hat G}_{min}(n)$, with the weights described in \eqref{edgeweights} (b).
These weighted steps are generated by the following operator acting on $H$:

\begin{defn}\label{Top}
We define the transfer operator $T$, acting on $H$ as follows:
\begin{eqnarray*}
T&=&{\hat w} {\hat \ell}+ {\hat w}{\hat m}{\hat \al}^{-1}{\hat z} {\hat \al} +{\hat \al}^{-1}{\hat z} {\hat \al}\\
T\vert x,y \rangle &=&\ell_y \vert x+1,y+1 \rangle+m_{x+1}\frac{\al_{x,y}}{\al_{x+1,y-1}} \vert x+2,y \rangle
+\frac{\al_{x,y}}{\al_{x+1,y-1}}\vert x+1,y-1 \rangle
\end{eqnarray*}
\end{defn}

Let $Z_{i,j}(n)$ denote the partition function for paths from $s_i$ to $e_j$ on ${\hat G}_{min}(n)$.

\begin{lemma}\label{partf}
We have:
$$Z_{i,j}(n)= \langle j-1,j-1\vert (I-T)^{-1} \vert 1-i,i-1\rangle $$
or equivalently:
$$Z_{i,j}(n)= \langle0,0\vert {\hat w}^{1-j} (I-T)^{-1} {\hat z}^{1-i}\vert 0,0\rangle $$
\end{lemma}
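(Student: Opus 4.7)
The plan is the standard transfer-operator argument: expand $(I-T)^{-1}$ formally as $\sum_{k\geq 0} T^k$ and identify the matrix element $\langle j-1,j-1\vert T^k\vert 1-i,i-1\rangle$ with the weighted sum over length-$k$ directed paths from $s_i$ to $e_j$ on $\hat G_{min}(n)$. Summing over $k$ then yields $Z_{i,j}(n)$. Because every step in the operator $T$ strictly increases the first coordinate $x$, for fixed endpoints only finitely many $k$ give a nonzero contribution, so the formal geometric series is actually a finite sum and $(I-T)^{-1}$ is unambiguously defined on the relevant matrix element.

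The core calculation is to verify that a single application of $T$ reproduces precisely the three outgoing weighted edges from a vertex $(x,y)$ of $\hat G_{min}(n)$, as specified in \eqref{edgeweights}(b). Unpacking Definition \ref{Top}, the term $\hat w \hat\ell$ sends $\vert x,y\rangle$ to $\ell_y\vert x+1,y+1\rangle$, matching the up-right (thickened red, weight $\lambda_{y+1-n}$) edge; the term $\hat w\hat m\hat\al^{-1}\hat z\hat\al$ produces $m_{x+1}\,\frac{\al_{x,y}}{\al_{x+1,y-1}}\vert x+2,y\rangle$, matching the horizontal (thickened blue, weight $\mu$) step through the down-pointing triangle; and $\hat\al^{-1}\hat z\hat\al$ produces $\frac{\al_{x,y}}{\al_{x+1,y-1}}\vert x+1,y-1\rangle$, matching the unadorned down-right edge. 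With this one-step identity in hand, a routine induction on $k$, using the resolution $\langle x',y'\vert T^{k+1}\vert x,y\rangle=\sum_{(u,v)}\langle x',y'\vert T\vert u,v\rangle\langle u,v\vert T^k\vert x,y\rangle$, shows that $\langle x',y'\vert T^k\vert x,y\rangle$ is the partition function for $k$-step paths from $(x,y)$ to $(x',y')$ on $\hat G_{min}(n)$. Specializing to the endpoints $s_i=(1-i,i-1)$ and $e_j=(j-1,j-1)$ and summing over $k$ gives the first formula.

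The equivalent version follows from the shift identities ${\hat z}^{1-i}\vert 0,0\rangle=\vert 1-i,i-1\rangle$ (iterating ${\hat z}\vert x,y\rangle=\vert x+1,y-1\rangle$) and $\langle 0,0\vert {\hat w}^{1-j}=\langle j-1,j-1\vert$ (since the dual action of ${\hat w}$ is $\langle u,v\vert\mapsto\langle u-1,v-1\vert$). The main obstacle is the bookkeeping in the first paragraph: one must match the index conventions of the operator $T$ (with its factors $\hat\al$, $\hat m$, $\hat \ell$ and shifts $\hat w,\hat z$) against the face-label/edge-weight conventions of the network $\hat G_{min}(n)$, in particular confirming that the ratio $\al_{x,y}/\al_{x+1,y-1}$ is exactly the ratio of down-triangle labels appearing on the two diagonal edges out of $(x,y)$ in the picture \eqref{edgeweights}(b). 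Once this weight-matching is checked, the remainder of the argument is purely formal.
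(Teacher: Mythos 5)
Your proposal is correct and follows essentially the same route as the paper: matching the three terms of the transfer operator $T$ in Definition \ref{Top} with the three weighted outgoing edges of \eqref{edgeweights}(b), identifying $\langle u,v\vert T^k\vert x,y\rangle$ with the weight of length-$k$ paths, and using the shift/adjoint identities for $\hat w$ and $\hat z$ to pass to the second formula. Your additional observations (that the geometric series terminates because each step of $T$ strictly increases $x$, and the explicit induction via the resolution of the identity) are correct elaborations of steps the paper leaves implicit.
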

\begin{proof}
By matching the Definition \ref{Top} with the weights of \eqref{edgeweights} (b), we easily see that
each step of a path on ${\hat G}_{min}(n)$ is generated by $T$. More precisely,
the quantity $\langle u,v\vert T^k\vert x,y\rangle$ is the sum over all
paths on ${\hat G}_{min}(n)$ of length $k$ starting at the point $(x,y)$ and ending at the point $(u,v)$, 
of the product of the corresponding network edge weights. The first part of the Lemma follows.
For the second part, we note that the adjoint $w^\dagger=w^{-1}$, and 
$w\vert x,y\rangle =\vert x+1,y+1\rangle$, so the dual is 
$$\langle x+1,y+1\vert=\langle x,y\vert {\hat w}^{\dagger}=\langle x,y\vert {\hat w}^{-1}$$
The second formula follows.
\end{proof}

Finally, combining Lemma \ref{partf} and Theorem \ref{latlam},
we obtain a compact expression for the generalized Lambda-determinant:

\begin{thm}\label{finlut}
\begin{equation} \vert A\vert_{\lambda,\mu} = \det_{1\leq i,j\leq n} \, 
\langle0,0\vert {\hat w}^{1-j}\, {\hat \al} \,\Big(I-{\hat w} {\hat \ell}
- {\hat w}{\hat m}{\hat \al}^{-1}{\hat z} {\hat \al} -{\hat \al}^{-1}{\hat z} {\hat \al}\Big)^{-1}\, {\hat z}^{1-i}\vert 0,0\rangle
\end{equation}
\end{thm}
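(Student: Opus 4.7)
The strategy is to combine three ingredients already established: Theorem \ref{latlam} (which expresses $\vert A\vert_{\lambda,\mu}$ as $\hat a_{min}(n)\prod_{i=1}^n a_{n,i}$), the Lindström--Gessel--Viennot principle as in Lemma \ref{aminmax} applied to the modified triangular-lattice network $\hat G_{min}(n)$, and Lemma \ref{partf} (which gives the single-path partition functions $Z_{i,j}(n)$ in operator form). The insertion of $\hat\al$ that appears in the theorem statement is precisely what encodes the prefactor $\prod_{i=1}^n a_{n,i}$, absorbed column by column into the determinant.

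First I would use Theorem \ref{latlam} to write $\vert A\vert_{\lambda,\mu}=\hat a_{min}(n)\prod_{i=1}^n a_{n,i}$. Next, since $\hat G_{min}(n)$ is a planar DAG with sources $s_i=(1-i,i-1)$ on the SW boundary and sinks $e_j=(j-1,j-1)$ on the SE boundary, the LGV lemma (exactly in the spirit of Lemma \ref{aminmax}) gives $\hat a_{min}(n)=\det_{1\leq i,j\leq n} Z_{i,j}(n)$. Here one needs the standard check that no non-identity permutation $\sigma$ admits a non-intersecting family of paths $s_i\to e_{\sigma(i)}$; this follows from the cyclic ordering of the source and sink arcs on the planar boundary. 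Then by Lemma \ref{partf} and the explicit form of $T$ from Definition \ref{Top}, one has $Z_{i,j}(n)=\langle 0,0\vert\hat w^{1-j}(I-T)^{-1}\hat z^{1-i}\vert 0,0\rangle$ with $I-T=I-\hat w\hat\ell-\hat w\hat m\hat\al^{-1}\hat z\hat\al-\hat\al^{-1}\hat z\hat\al$, matching the resolvent appearing in the theorem.

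To absorb $\prod_{j=1}^n a_{n,j}$, observe that at the endpoint $e_j=(j-1,j-1)$ the face label is $\al_{j-1,j-1}=a_{n,j}$, directly from the definition $\al_{x,y}=a_{(x-y)/2+n,(x+y)/2+1}$. Since $\hat\al$ is diagonal, $\langle e_j\vert\hat\al=a_{n,j}\langle e_j\vert$, and since $\langle 0,0\vert\hat w^{1-j}=\langle e_j\vert$ (using $\hat w^{\dagger}=\hat w^{-1}$), inserting $\hat\al$ between $\hat w^{1-j}$ and the resolvent multiplies the $(i,j)$-entry of the matrix in the theorem by $a_{n,j}$. Pulling $a_{n,j}$ out of the $j$th column then gives $\prod_{j=1}^n a_{n,j}\cdot\det_{i,j}Z_{i,j}(n)=\prod_{j=1}^n a_{n,j}\cdot\hat a_{min}(n)=\vert A\vert_{\lambda,\mu}$, as desired. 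The main (but routine) obstacle is verifying that off-diagonal permutations drop out of the LGV expansion, which is a standard planarity argument once one locates the sources and sinks on disjoint boundary arcs; the remainder is operator-algebraic bookkeeping.
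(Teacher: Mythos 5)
Your proposal is correct and follows essentially the same route as the paper: write $\hat a_{min}(n)=\det_{1\leq i,j\leq n}Z_{i,j}(n)$ (which is in fact immediate from the definition of the principal minor of the network matrix, with the LGV/non-intersecting-path interpretation being a separate consequence rather than a needed ingredient), invoke Lemma \ref{partf} for the operator form of $Z_{i,j}(n)$, and absorb the prefactor $\prod_{j=1}^n a_{n,j}$ column by column via the identity $\al_{j-1,j-1}\langle 0,0\vert\hat w^{1-j}=\langle 0,0\vert\hat w^{1-j}\hat\al$. The only superfluous element is your worry about off-diagonal permutations, which plays no role since the determinant identity here is definitional rather than an application of the LGV cancellation argument.
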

\begin{proof} 
By definition, we have 
$${\hat a}_{min}(n)=\det_{1\leq i,j\leq n} Z_{i,j}(n)=\sum_{\sigma\in S_n}{\rm sgn}(\sigma)\, \prod_{j=1}^n Z_{\sigma(j),j}(n)$$
The formula of Theorem \ref{latlam} includes a prefactor $\prod_{j=1}^n a_{n,j}$.
Let us absorb the terms $\al_{j-1,j-1}=a_{n,j}$ in each of the factors $Z_{\sigma(j),j}(n)$, $j=1,2,...,n$. 
As $\langle 0,0\vert {\hat w}^{1-j}=\langle j-1,j-1\vert$,
we have $\al_{j-1,j-1} \langle 0,0\vert {\hat w}^{1-j}=\langle 0,0\vert {\hat w}^{1-j} {\hat \al}$. The theorem follows.
\end{proof}

\begin{remark}\label{remax}
As a non-trivial check of Theorem \ref{finlut}, let us evaluate the determinant in the homogeneous case when
$\lambda_a=\lambda$ and $\mu_a=\mu$ for all $a$, and $a_{i,j}=1$ for all $i,j$. All the operators 
${\hat z},{\hat w},{\hat \ell},{\hat m}$ commute, and ${\hat b}=I$. $Z_{i,j}(n)$ is therefore simply
the  coefficient of $z^{i-1}w^{j-1}$ of the series:
$$ f_Z(z,w)=\frac{1}{1-z-\lambda w-\mu z w} $$
where the subscript $Z$ stands for the (infinite) matrix with entries $Z_{i,j}(\infty)$. Note that $f_Z$ is independent of $n$,
which is simply the size of the principal minor of $Z$ we are supposed to compute to obtain $\vert A\vert_{\lambda,\mu}$.
We have the following (infinite size) LU decomposition:
$$ Z= B(1,1)^t\, B(\lambda+\mu,\lambda)$$
where the infinite matrices $B(\al,\beta)$ are upper triangular, with non-zero matrix elements 
$B(\al,\beta)_{i,j}={j\choose i}\al^i\beta^{j-i}$
for $j\geq i\geq 0$, or equivalently with generating functions
$$ f_{B(\al,\beta)}(z,w)=\sum_{i,j\geq 0}  B(\al,\beta)_{i,j} z^i w^j=\frac{1}{1-\beta w-\alpha z w}\, .$$
The LU decomposition is easily obtained by the convolution formula:
$$ f_{B(1,1)^t\, B(\lambda+\mu,\lambda)}(z,w)=\oint {dt \over 2i\pi t}f_{B(1,1)^t}(z,t^{-1}) f_{B(\lambda+\mu,\lambda)}(t,w)\, .$$
where the integral extracts the constant term in $t$.
Finally, by triangularity of the factors,
the product $Z=B(1,1)^t\, B(\lambda+\mu,\lambda)$, when truncated to the first $n$ rows and columns, truncates
to the product of the $n\times n$ truncations of the factors. We deduce that the principal $n\times n$ minor is:
$$\vert A\vert_{\lambda,\mu}=\det_{0\leq i,j\leq n-1} (B(1,1)_{j,i})\, \det_{0\leq i,j\leq n-1}(B(\lambda+\mu,\lambda)_{i,j})
=(\lambda+\mu)^{n(n-1)/2}$$
In agreement with the result of Example \ref{maxvan} for $a_i=1$ for all $i$.
\end{remark}

\subsection{Transformation into 6V model/ASM: proof of Theorem \ref{main}}

By Theorem \ref{latlam}, the generalized Lambda-determinant is expressed as a sum over 
configurations of $n$ non-intersecting paths on the network ${\hat G}_{min}(n)$. Any such configuration
determines entirely the state of the down-pointing triangles labelled by the matrix elements $(a_{i,j})_{i,j\in I}$
(we may draw such triangles around $a_{i,1}$, $a_{i,n}$ $a_{1,j}$ and $a_{n,j}$ as well).
We say that a vertex (resp. edge) is occupied (resp. empty) if a path of the configuration traverses it.
This gives rise to seven possible local configurations of the bottom vertex and the three edges of each
down-pointing triangle, that we associate with the local configurations of the 6V model as follows:
\begin{equation}\label{bijsixv} \raisebox{-2.cm}{\hbox{\epsfxsize=14.cm \epsfbox{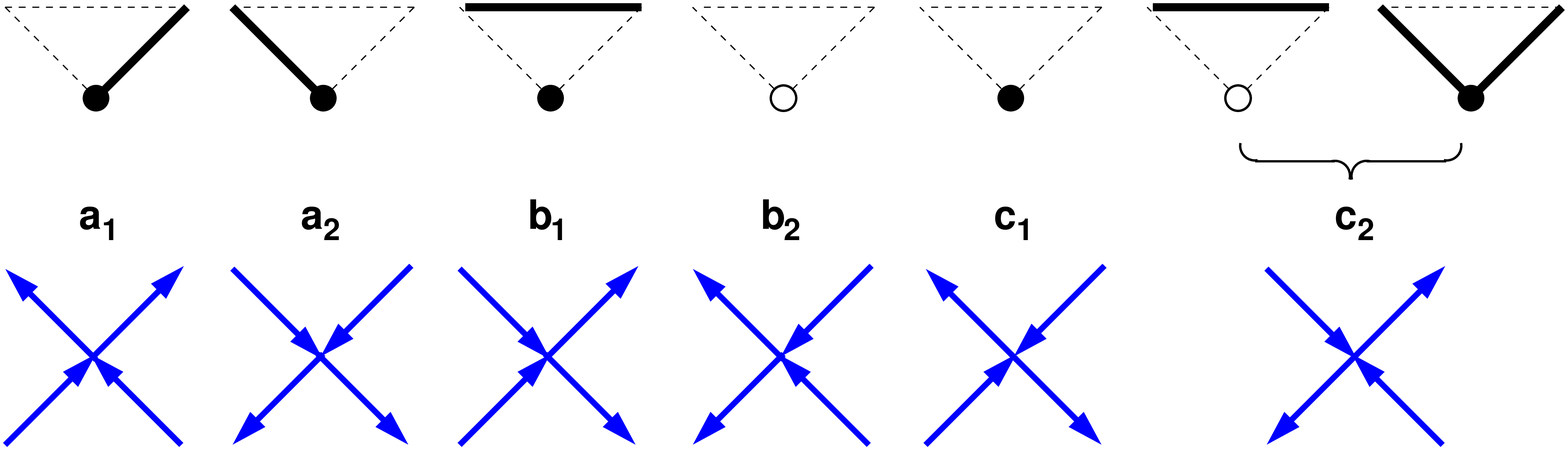}}}\end{equation}
where we have represented occupied vertices/edges by filled circles/thick solid lines, and empty ones
by empty circles/dashed lines. This is a mapping $2^m$ to $1$ of the configurations of $n$ non-intersecting
paths on ${\hat G}_{min}(n)$ with a total of $m$ down-pointing triangle configurations among the two on the right
to those of the 6V model with DWBC on the $n\times n$ grid rotated by $+\pi/4$, with $m$ vertices of type $c_2$.
The latter in turn are in bijection with ASMs of size $n\times n$ with $m$ entries equal to $-1$. This is illustrated 
in the case $n=4$, $m=1$ below:
$$  \raisebox{-1.8cm}{\hbox{\epsfxsize=16.cm \epsfbox{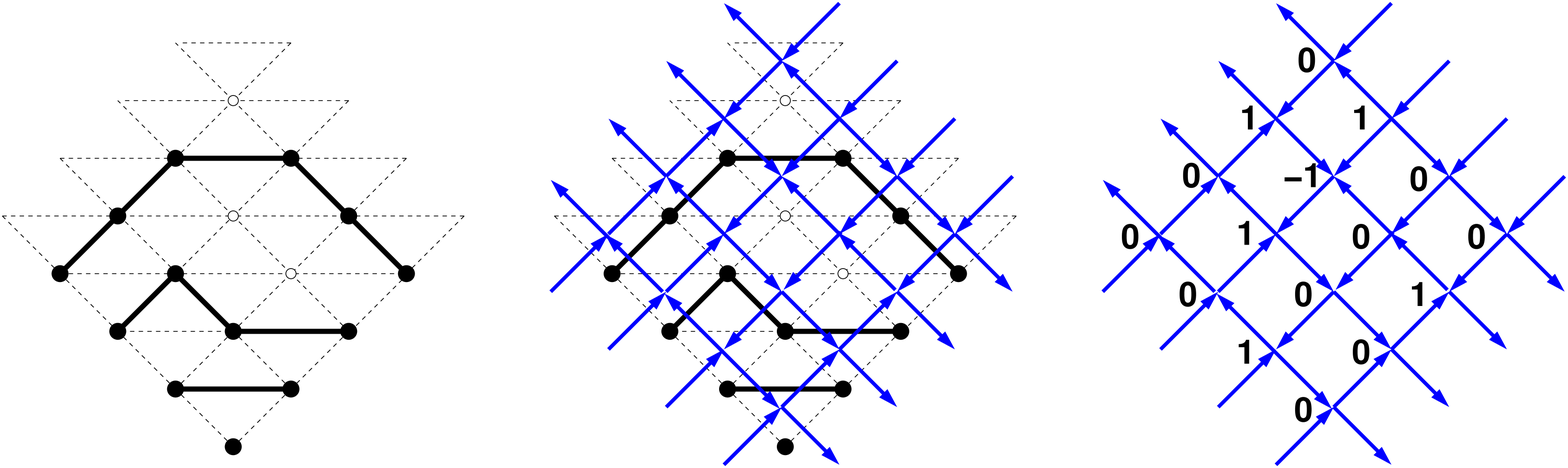}}} $$

Let us now collect the various weights of the path configurations
contributing to $\vert A\vert_{\lambda,\mu}$, by distinguishing the dependence on the 
matrix elements of $A$ (face labels)
and the coefficients $\lambda,\mu$ (edge labels).
The matrix elements $a_{i,j}$
are the labels of the down-pointing triangular faces. For each face labeled $a$, only four edges of the network
${\hat G}_{min}(n)$ have weights depending on $a$. These are:
$$  \raisebox{-1.8cm}{\hbox{\epsfxsize=5.cm \epsfbox{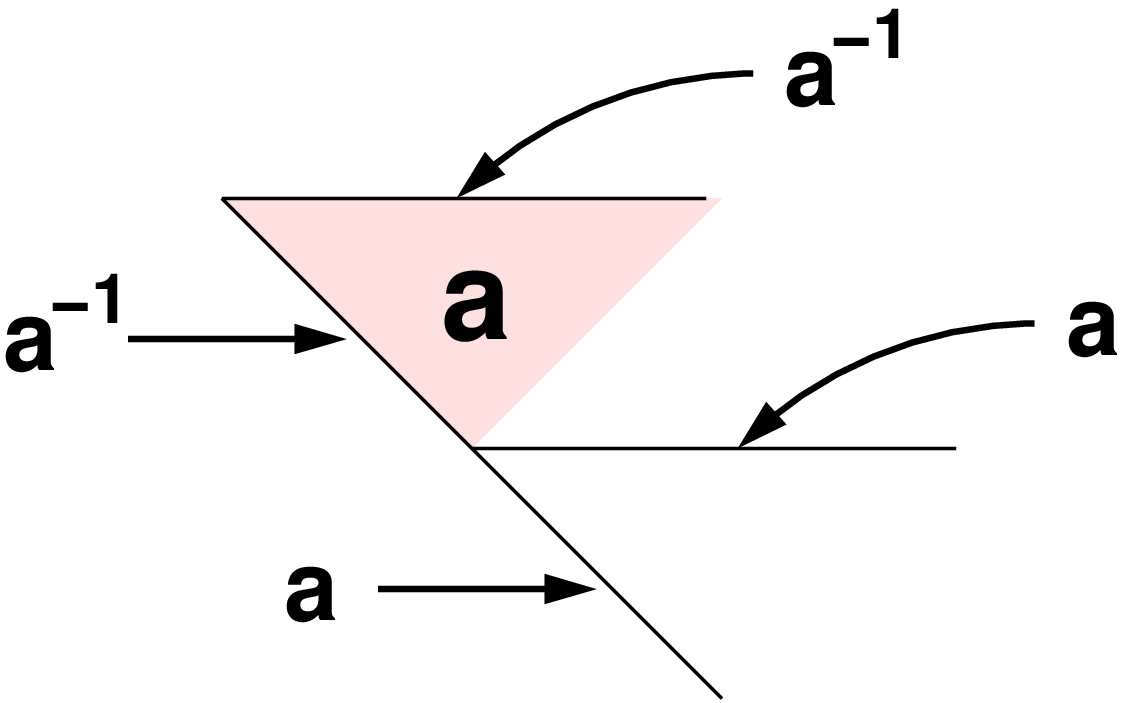}}} $$
where we have shaded the down-pointing triangle with label $a$, and indicated the $a$-dependence 
of the four relevant edges. This gives rise to the only following contributions involving the matrix element
$a$, depending on the state of occupation of the four above edges, which we list by configuration
of the down-pointing triangle:
\begin{equation}\label{fintable}  \raisebox{-1.8cm}{\hbox{\epsfxsize=14.cm \epsfbox{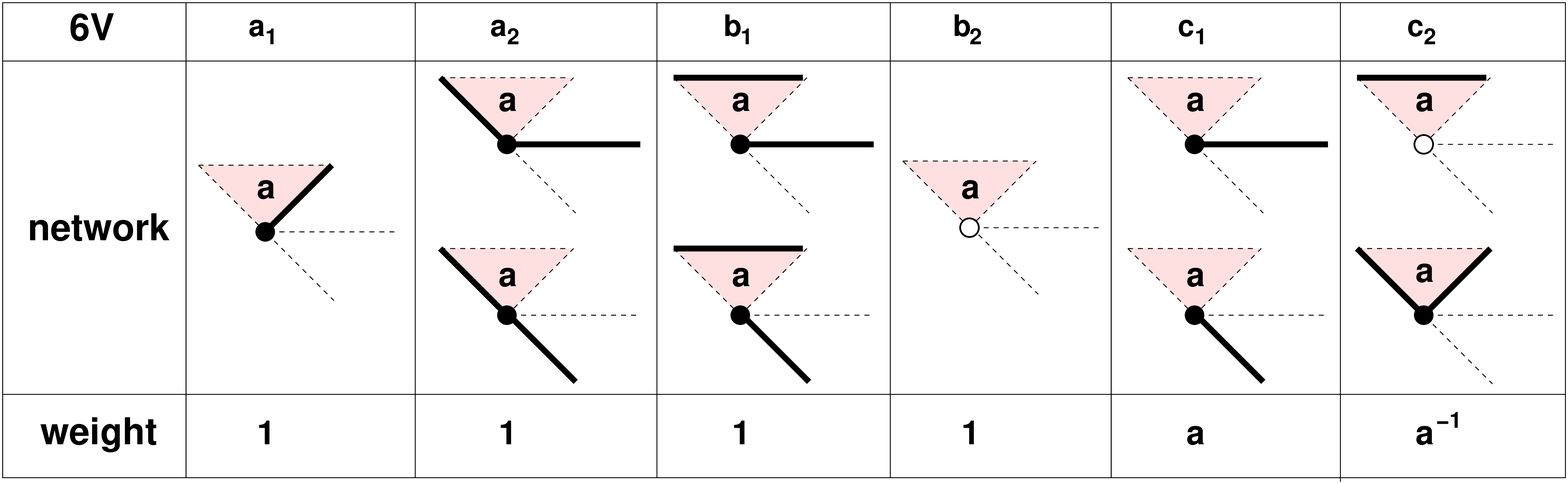}}} 
\end{equation}
The dependence on the face label $a=a_{i,j}$ is therefore $a^b$, where $b=b_{i,j}\in \{0,\pm 1\}$ is the matrix
element of the ASM in bijection with the associated 6V model configuration. This however is only valid for $i\neq n$,
and extra care should be taken with the 
SE down-pointing triangles. Indeed, their bottom vertex must always be occupied (it is an exit point for the non-intersecting
path configuration), and only two configurations may occur: (i) with the top edge occupied (6V configuration $b_1$)
or (ii) with the left diagonal edge occupied (6V configuration $a_2$). In both cases, the occupied edge receives
a weight $a^{-1}$:
$$   \raisebox{-1.8cm}{\hbox{\epsfxsize=7.cm \epsfbox{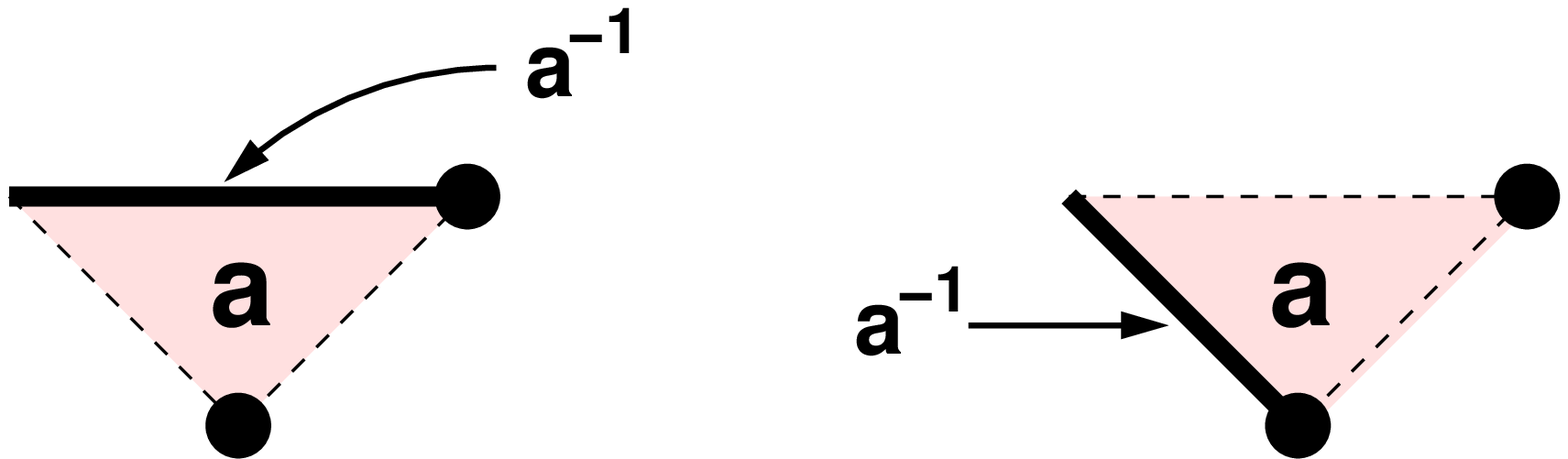}}}  $$
However, recall that the formula for the generalized Lambda-determinant of Theorem \ref{latlam}
contains a prefactor $\prod_{i=1}^n a_{n,i}$ which is the product over the labels of the SE down-pointing triangles.
We may therefore absorb this prefactor in a redefinition of the weights of horizontal and left diagonal edges 
adjacent to the SE faces, namely by multiplying them by the face label $a$. The resulting network $G'_{min}(n)$
and the $n\times n$ principal minor $a'_{min}(n)$ of its matrix are such that 
\begin{equation}\label{lutfin} \vert A\vert_{\lambda,\mu}= a'_{min}(n)\,. \end{equation}
Note that this latter transformation of the network into $G'_{min}(n)$ corresponds exactly to the
insertion of the $\hat \alpha$ operator in Theorem \ref{finlut}.
According to the table \eqref{fintable}, each configuration of $n$ non-intersecting paths on the network 
$G'_{min}(n)$ that contributes to $a'_{min}(n)$ receives a weight $\prod_{i,j} a_{i,j}^{b_{i,j}}$ where
$b_{i,j}$ is the ASM in bijection with the 6V configuration associated to the path configuration.

Finally, the dependence of the edge weights on the coefficients $\lambda_a,\mu_a$ 
is clear from \eqref{edgeweights} $(b)$:
around the down-pointing triangle with label $a_{i,j}$, the horizontal edge receives the weight $\mu_{i+j-n-1}$,
while the left diagonal edge receives the weight $\lambda_{j-i}$. This gives the total weights
\begin{equation}\label{tablefin}  \raisebox{-1.8cm}{\hbox{\epsfxsize=14.cm \epsfbox{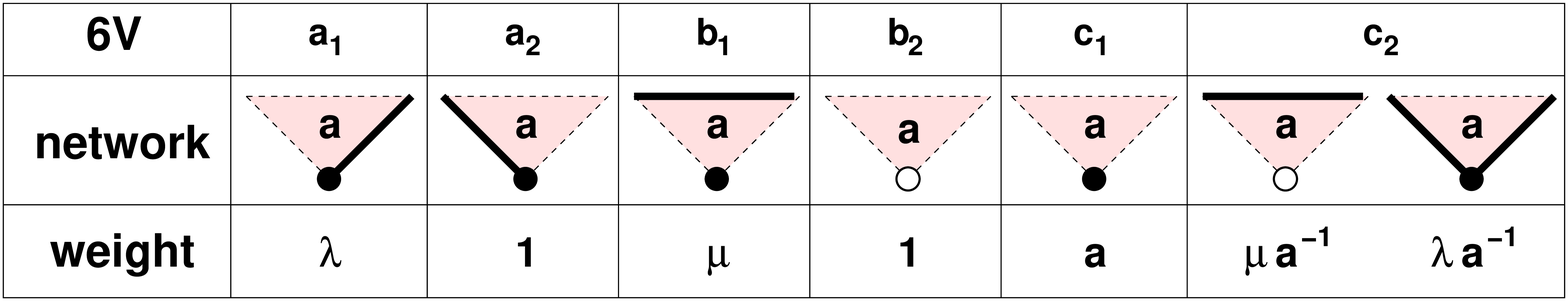}}} 
\end{equation}
where for $a=a_{i,j}$, $\lambda=\lambda_{j-i}$ and $\mu=\mu_{i+j-n-1}$, for all $i,j=1,2,...,n$.

Starting from the expression \eqref{lutfin}, we may rearrange the configurations of paths with a total of $m$
triangle configurations of type $c_2$ say in positions $(i_1,j_1),(i_2,j_2),...,(i_m,j_m)$ (and a total of $m+n$ 
triangle configurations of type $c_1$ in fixed positions)
into a $2^m$-tuple, in bijection with  the unique ASM with exactly $m$ entries $-1$ at the same positions
(and $m+n$ entries equal to $1$ at the same positions).
Summing over the weights for such a $2^m$-tuple yields an overall factor of
$$\prod_{r=1}^m \left(\lambda_{j_r-i_r} +\mu_{i_r+j_r-n-1} \right) $$
according to the table \eqref{tablefin}. Collecting all the other weights,
we may rewrite $\vert A\vert_{\lambda,\mu}$ as a sum over $n\times n$ ASMs $B$, with the weights of the form 
$\prod_{i,j\in I} v_{i,j}(a_{i,j},b_{i,j}\vert\lambda_{j-i},\mu_{i+j-n-1})$ where
$$v_{i,j}(a,b\vert\lambda,\mu) =a^b \times \left\{ \begin{matrix} 
\lambda & {\rm if}\ {\mathcal C}_{i,j}(B)=a_1\\
\mu & {\rm if}\ {\mathcal C}_{i,j}(B)=b_1\\
\lambda+\mu & {\rm if}\ {\mathcal C}_{i,j}(B)=c_2\\
1 & {\rm otherwise} 
\end{matrix}\right. $$
This completes the proof of Theorem \ref{main}.

\section{Discussion and conclusion}

In this paper, we have defined the generalized Lambda-determinant of any square matrix $A$. Via Theorem\ref{main}
this quantity may be reexpressed in terms of weighted configurations of the 6V-DWBC model on a square grid
of the same size as $A$, in bijection with ASMs of the same size.

\subsection{Properties}

Let us show how the formula of Theorem \ref{main} may be applied to give an alternative
proof of Proposition \ref{apropos}. Notations are as in Proposition \ref{apropos}.

We simply have to examine how the 6V/ASM configurations contributing to the formula
\eqref{formu}  are changed when we change $A\to \varphi(A)$,
namely (i) rotate the matrix $A$ by a clockwise quarter-turn $-\pi/2$ ($\varphi=\sigma$) or (ii) transpose the
matrix $A$, namely reflect it w.r.t. to its diagonal ($\varphi=\tau$). 
We have the following action of $\varphi$ on the 6V-DWBC
configurations $\mathcal C$, compatible with the transformation
$B\to \varphi(B)$ for any $n\times n$ ASM $B$:

\begin{lemma}
Let $\varphi\in\{\sigma,\tau\}$ act on configurations $\mathcal C$ of the $n\times n$ grid 6V-DWBC
model as follows:
$\varphi({\mathcal C})$ is obtained by applying directly the transformation $\varphi$ to the arrow configuration
(rotation by $-\pi/2$ for $\varphi=\sigma$, reflection w.r.t. the diagonal for $\varphi=\tau$), and then
performing a global flip of all the edge orientations. Then we have for all $n\times n$ ASM $B$:
$$ \varphi\Big({\mathcal C}(B)\Big)={\mathcal C}\Big(\varphi(B)\Big)\, .$$
\end{lemma}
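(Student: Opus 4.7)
The plan is to verify the identity locally, by checking how each of the three ingredients (the 6V ice rule, the DWBC boundary conditions, and the $c_1/c_2$ bijection with ASM entries $\pm 1$) behaves under the two-step operation ``geometric transformation then global orientation flip''.

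First I would observe that the 6V ice rule (two arrows in and two out at every vertex) is preserved both by any Euclidean symmetry of the grid and by reversing all orientations, so $\varphi(\mathcal C)$ is automatically a valid 6V configuration for $\varphi \in \{\sigma,\tau\}$. Next I would check that the DWBC is \emph{restored} by the combination of the two steps. Under $\sigma$ (clockwise quarter-turn), the four boundary sides are permuted as left$\,\to\,$top, top$\,\to\,$right, right$\,\to\,$bottom, bottom$\,\to\,$left, and one verifies that each of the four DWBC inward/outward boundary arrows is mapped to an arrow pointing the \emph{wrong} way for DWBC on its new edge; the subsequent global flip reverses each of these, and DWBC is recovered. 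For $\tau$ (reflection through the main diagonal), the swap is top$\,\leftrightarrow\,$left and bottom$\,\leftrightarrow\,$right, and the same ``all wrong, then flip'' argument applies.

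The core of the proof is then to check the preservation of the $c_1/c_2$ types. The $a$ and $b$ vertices all correspond to $b_{i,j}=0$, so any permutation among them caused by $\varphi$ is invisible at the level of $B$. For the $c$ vertices (say $c_1$ has arrows in from top and bottom, out to left and right; $c_2$ is the reverse), the purely geometric step of $\varphi$ interchanges $c_1$ and $c_2$, since the geometric transformation carries ``vertical-in/horizontal-out'' to ``horizontal-in/vertical-out''. The subsequent global flip interchanges $c_1 \leftrightarrow c_2$ once more, so that the net effect on a $c_1$ vertex is again $c_1$, and similarly on a $c_2$ vertex is again $c_2$. Consequently, the ASM-entry at each vertex is preserved by $\varphi$.

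Finally, I would combine these to conclude. The position of a given $c_1$ (resp. $c_2$) vertex of $\mathcal C(B)$ is moved by the geometric step of $\varphi$ precisely to the position dictated by the action of $\varphi$ on $B$ (rotation of the matrix for $\sigma$, transposition for $\tau$), while its type is preserved by the argument above; since the $a,b$ positions are also correctly relocated, one gets vertex-by-vertex equality of $\varphi(\mathcal C(B))$ and $\mathcal C(\varphi(B))$. The main obstacle will be the careful direction-chasing: one must keep track of the fact that $\varphi$ alone swaps $c_1$ and $c_2$, and that the global flip is essential not only to repair the DWBC but also to restore the correct ASM-entry labeling.
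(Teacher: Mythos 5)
Your proposal is correct and follows essentially the same route as the paper: one observes that the global orientation flip restores the DWBC, and that the combined operation fixes the vertex types $c_1$ and $c_2$ (each being swapped by the geometric step and swapped back by the flip), so the ASM read off from $\varphi(\mathcal C(B))$ is $\varphi(B)$. The only difference is that the paper also tabulates the induced permutation of the $a$ and $b$ types, which is irrelevant for the lemma itself but is used later in the weight computation.
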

\begin{proof}
Note first that the global sign flip reinstates the correct orientations of external edges to restore 
DWBC boundary conditions. Under the transformation $\varphi$, the six vertex configurations are
mapped as follows:
\begin{equation}\label{transfos} \begin{matrix}
\sigma(a_1)=b_2 & \sigma(b_1)=a_1 & \sigma(a_2)=b_1 & \sigma(b_2)=a_2
& \sigma(c_1)=c_1 & \sigma(c_2)=c_2 \\
\tau(a_1)=a_1& \tau(b_1)=b_2& \tau(a_2)=a_2& \tau(b_2)=b_1
& \tau(c_1)=c_1& \tau(c_2)=c_2
\end{matrix}
\end{equation}
As both $c_1$ and $c_2$ are invariant under $\varphi\in \{\sigma,\tau\}$, the ASM associated to the
configuration $\varphi({\mathcal C})$ is simply $\varphi(B)$, and the Lemma follows.
\end{proof}

Let us now evaluate $\vert \varphi(A)\vert_{\varphi(\lambda),\varphi(\mu)}$.
Using the formula \eqref{formu},
the weight $w$ reads:
$$ w_{i,j}\Big(\varphi(A),B;\varphi(\lambda),\varphi(\mu)\Big)=\Big(\varphi(A)_{i,j}\Big)^{b_{i,j}}
\times \left\{ \begin{matrix}
\varphi(\lambda)_{j-i} & {\rm if} \, {\mathcal C}(B)_{i,j}=a_1 \\
\varphi(\mu)_{i+j-n-1} & {\rm if} \, {\mathcal C}(B)_{i,j}=b_1 \\
\varphi(\lambda)_{j-i} +\varphi(\mu)_{i+j-n-1} & {\rm if}\, {\mathcal C}(B)_{i,j}=c_2 \\
1 & {\rm otherwise}
\end{matrix}\right.
$$
Let us perform the change of summation variables $B=\varphi(C)$. For $\varphi=\sigma,\tau$ respectively, this
gives weights:
\begin{eqnarray*}
w_{i,j}\Big(\sigma(A),\sigma(C);\sigma(\lambda),\sigma(\mu)\Big)&=&\Big(a_{n+1-j,i}\Big)^{c_{n+1-j,i}}
\times \left\{ \begin{matrix}
\mu_{i-j} & {\rm if} \, {\mathcal C}(C)_{n+1-j,i}=b_1 \\
\lambda_{i+j-n-1} & {\rm if} \, {\mathcal C}(C)_{n+1-j,i}=a_2 \\
\mu_{i-j} +\lambda_{i+j-n-1} & {\rm if}\, {\mathcal C}(C)_{n+1-j,i}=c_2 \\
1 & {\rm otherwise}
\end{matrix}\right. \\
w_{i,j}\Big(\tau(A),\tau(C);\tau(\lambda),\tau(\mu)\Big)&=&\Big(a_{j,i}\Big)^{c_{j,i}}
\times \left\{ \begin{matrix}
\lambda_{i-j} & {\rm if} \, {\mathcal C}(C)_{j,i}=a_1 \\
\mu_{i+j-n-1} & {\rm if} \, {\mathcal C}(C)_{j,i}=b_2 \\
\lambda_{i-j} +\mu_{i+j-n-1} & {\rm if}\, {\mathcal C}(C)_{j,i}=c_2 \\
1 & {\rm otherwise}
\end{matrix}\right.
\end{eqnarray*}
Equivalently, we may write:
\begin{eqnarray*}
w_{j,n+1-i}\Big(\sigma(A),\sigma(C);\sigma(\lambda),\sigma(\mu)\Big)&=&\Big(a_{i,j}\Big)^{c_{i,j}}
\times \left\{ \begin{matrix}
\mu_{i+j-n-1} & {\rm if} \, {\mathcal C}(C)_{i,j}=b_1 \\
\lambda_{j-i} & {\rm if} \, {\mathcal C}(C)_{i,j}=a_2 \\
\lambda_{j-i} +\mu_{i+j-n-1} & {\rm if}\, {\mathcal C}(C)_{i,j}=c_2 \\
1 & {\rm otherwise}
\end{matrix}\right. \\
w_{j,i}\Big(\tau(A),\tau(C);\tau(\lambda),\tau(\mu)\Big)&=&\Big(a_{i,j}\Big)^{c_{i,j}}
\times \left\{ \begin{matrix}
\lambda_{j-i} & {\rm if} \, {\mathcal C}(C)_{i,j}=a_1 \\
\mu_{i+j-n-1} & {\rm if} \, {\mathcal C}(C)_{i,j}=b_2 \\
\lambda_{j-i} +\mu_{i+j-n-1} & {\rm if}\, {\mathcal C}(C)_{i,j}=c_2 \\
1 & {\rm otherwise}
\end{matrix}\right.
\end{eqnarray*}
These two expressions match $w_{i,j}(A,C;\lambda,\mu)$ provided the 6V weights 
in the definition are
changed into respectively (i) $a_1\to a_2$ and
(ii) $b_1\to b_2$. 

Proposition \ref{apropos} follows from the fact that the formula  \eqref{formu}
remains valid if we change $a_1\to a_2$ and/or
$b_1\to b_2$ in the definition of $w$. This in turn is the consequence of the following:

\begin{figure}
\centering
\includegraphics[width=14.cm]{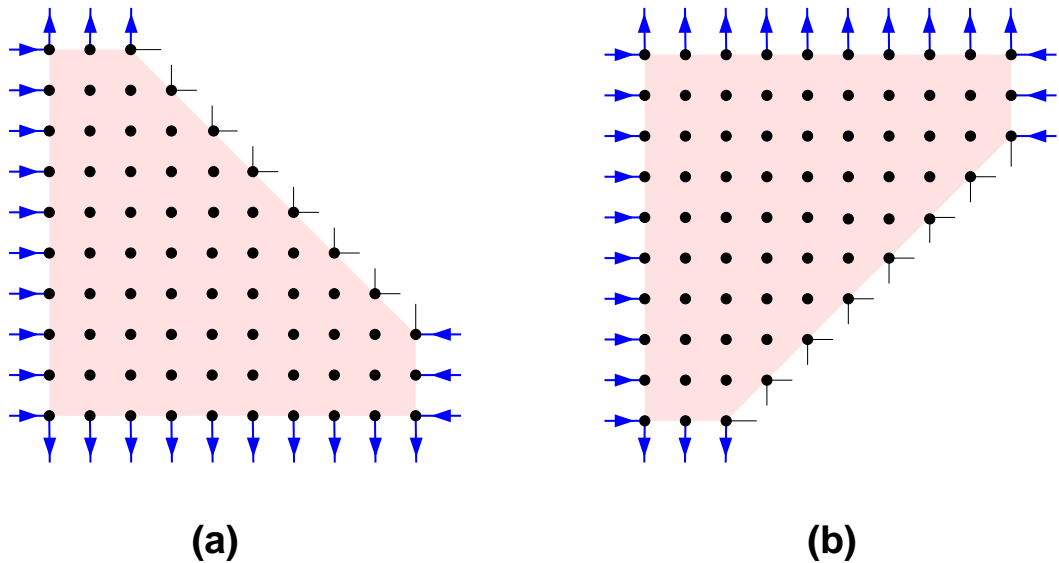}
\caption{\small The sub-configuration of a 6V-DWBC configuration $\mathcal C$ situated
(a) under a diagonal (b) above an anti-diagonal.
We have marked the edges of the diagonal border, respectively equal to the N and E 
edges (a) and S and E edges (b).}
\label{fig:diaganti}
\end{figure}

\begin{lemma}
For any 6V-DWBC configuration $\mathcal C$, we have for all $a$:
\begin{eqnarray*} 
\left\vert \{i\in \{1,2,...,n\}\, {\rm such}\, {\rm that}\, {\mathcal C}_{i,i+a}=a_1\}\right\vert &=&
\left\vert \{i\in \{1,2,...,n\}\, {\rm such}\, {\rm that}\, {\mathcal C}_{i,i+a}=a_2\}\right\vert \\
\left\vert \{i\in \{1,2,...,n\}\, {\rm such}\, {\rm that}\, {\mathcal C}_{i,a-j}=b_1\}\right\vert &=&
\left\vert \{i\in \{1,2,...,n\}\, {\rm such}\, {\rm that}\, {\mathcal C}_{i,a-j}=b_2\}\right\vert
\end{eqnarray*}
in other words there are as many $a_1$ configurations as $a_2$ along each parallel to the diagonal,
and there are as many $b_1$ configurations as $b_2$ along each parallel to the anti-diagonal.
\end{lemma}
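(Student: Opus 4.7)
The plan is to exploit conservation of the $6$V arrow current across a diagonal cut. For each integer $a$, let
$R_a = \{(i,j) \in \{1,\ldots,n\}^2 : j - i < a\}$
denote the south-west region lying below the diagonal $\Delta_a = \{(i,i+a)\}$. Its boundary decomposes into the grid-external edges attached to vertices of $R_a$ on its left column or bottom row, together with the interior edges joining $R_a$ to $\Delta_a$, which are precisely the south- and west-edges of each vertex of $\Delta_a$. I would define the flux into $R_a$ across an oriented edge to be $+1$ when the arrow enters $R_a$ and $-1$ when it leaves; since every vertex of the $6$V model has two arrows in and two out, the total flux through $\partial R_a$ vanishes. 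So it suffices to show that the external and diagonal contributions $F_{\mathrm{ext}}$ and $F_{\mathrm{diag}}$ vanish separately.

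The vanishing of $F_{\mathrm{ext}}$ is forced by the DWBC: horizontal external arrows point into the grid, contributing $+1$ at every vertex $(i,1) \in R_a$, while vertical external arrows point out of the grid, contributing $-1$ at every vertex $(n,j) \in R_a$. A direct count, treating the cases $a \ge 1$ and $a \le 0$ separately, shows that these two sets of vertices are equinumerous, so $F_{\mathrm{ext}} = 0$ and therefore $F_{\mathrm{diag}} = 0$.

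The last step is the local tabulation at each vertex of $\Delta_a$. Using the symmetry data of Proposition~\ref{apropos}, the types $a_1, a_2$ are identified as the two ``straight'' configurations left invariant by $\tau$ (diagonal reflection composed with global arrow flip), and the $\sigma$-cycle $a_1 \to b_2 \to a_2 \to b_1$ then fixes $b_1$ and $b_2$; the two $c$-types are the remaining source/sink configurations. A short case check reading the arrow directions on the S- and W-edges of a vertex of $\Delta_a$ yields contributions $-2, +2, 0, 0, 0, 0$ for the types $a_1, a_2, b_1, b_2, c_1, c_2$ respectively. Substituting into $F_{\mathrm{diag}} = 0$ gives the first identity. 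The anti-diagonal statement follows from the parallel argument applied to $R'_a = \{(i,j) : i+j < a\}$, whose interior boundary with the anti-diagonal $i+j = a$ consists of the north- and west-edges of its vertices; the analogous tabulation produces contributions $0, 0, -2, +2, 0, 0$ for the same six types, yielding $|b_1| = |b_2|$. The only technically delicate point is this local tabulation: everything else is a short arrow-conservation argument combined with a DWBC boundary count, but one has to be confident in the correspondence between each of the six labels and the actual $6$V arrow pattern it denotes, which is why I would carefully derive it from the symmetries of Proposition~\ref{apropos} rather than from the unseen figure.
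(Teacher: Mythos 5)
Your proposal is correct and is essentially the paper's own argument: conservation of arrows across the cut separating the region on one side of a (anti-)diagonal, vanishing of the external flux by the DWBC, and the observation that among the six vertex types only $a_1,a_2$ (resp.\ $b_1,b_2$) have both edges crossing the diagonal (resp.\ anti-diagonal) cut oriented the same way, hence contribute $\pm2$ while the rest contribute $0$. The only detail to tighten is that for some values of $a$ the region also meets the top row and right column, whose external edges must be included in the count showing $F_{\mathrm{ext}}=0$ (the paper handles this with its ``possibly right/top'' borders), but this does not change the argument.
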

\begin{proof}
Let us consider a diagonal $(i,i+a)_{1\leq i\leq n-a}$ of $\mathcal C$, and the sub-configuration 
situated under the line $(i,i+a-1)$ (see Fig.\ref{fig:diaganti} (a)). 
This configuration has entering edges on its left 
and possibly right vertical borders, and outgoing edges along its bottom and possibly top horizontal
borders, in equal numbers. By the neutrality condition at each vertex (there are as many entering
as outgoing edges), we deduce that there are as many entering as outgoing edges along the diagonal border.
These are the N and E edges of the vertices of the diagonal border. Inspecting the vertices on \eqref{sixvertices}
we see that all vertices have opposite orientations of their N and E edges, except for $a_1$ and $a_2$
who have respectively two outgoing and two entering N and E edges. We deduce that these occur 
in the same number to guarantee that as many edges are entering and outgoing along the diagonal border.
The argument may be repeated for anti-diagonals by considering the sub-configuration situated above the
anti-diagonal $(i,a+1-i)$ (see Fig.\ref{fig:diaganti} (b)). In that case, we must inspect the S and E edges of the
vertices of \eqref{sixvertices}, and we note that only $b_1$ and $b_2$ have S and E edges pointing both out or in,
therefore they must be in equal number along the diagonal border. The Lemma follows.
\end{proof}

\subsection{The generalized Lambda-determinant in terms of ASM}

Theorem \ref{main} expresses the generalized Lambda-determinant in terms of 6V-DWBC configurations.
Let us reexpress this quantity purely in ASM terms.
To this end, we must better explain the inversion number of any ASM $B$.
The contributions to ${\rm Inv}(B)=\sum_{i<j\quad k<\ell} b_{j,k}b_{i,\ell}$ 
are obtained from rectangular submatrices of $B_{[i,j]}^{[k,\ell]}$
of $B$, obtained by retaining only the intersection between rows $(i,i+1,...,j)$ and columns $(k,k+1,...,\ell)$ in $B$.
To produce a non-trivial contribution to ${\rm Inv}(B)$ the rectangle must have non-zero elements in position 
$(j,k)$ (SW) and $(i,\ell)$ (NE). Let us now concentrate on the element in position $(i,k)$ (NW). Let $h,v$ denote
respectively the first non-zero element of $B$ on row $i$ to the right of $b_{i,k}$ and on column $k$ below $b_{i,k}$.
The simplest way of thinking about the bijection from ASM to 6V-DWBC is that along rows edges are oriented from
the entries $-1$ to the entries $1$ and get reversed at the first and last $1$, with opposite rules along columns.
Assuming $b_{i,k}=0$, this gives the following 4 possibilities: 
(i) $(h,v)=(1,1)$: then ${\mathcal C}(B)_{i,k}=a_1$; 
(ii) $(h,v)=(-1,1)$: then ${\mathcal C}(B)_{i,k}=b_2$; 
(iii) $(h,v)=(1,-1)$: then ${\mathcal C}(B)_{i,k}=b_1$;
(iv) $(h,v)=(-1,-1)$: then ${\mathcal C}(B)_{i,k}=a_2$.
When computing contributions to ${\rm Inv}(B)$, we may first fix the NW corner of rectangles we sum over, and finally
sum over the position $(i,k)$ of that corner.
In cases (ii-iv), for fixed $(i,k)$ the sum gives trivially zero, as there are as many $1$'s and $-1$ either on the row or
the column of $(i,k)$ or both. So only the case (i) contributes a total of $+1$ to ${\rm Inv}(B)$. Moreover,
if $b_{i,k}\neq 0$, only cases (i) and (iv) survive, with respectively $b_{i,k}=-1$ and $b_{i,k}=1$. However for the same reason
as before, the case (iv) does not contribute to ${\rm Inv}(B)$ when we sum over the rectangles with fixed NW corner $b_{i,k}$.
We are left with only the case (i), for which ${\mathcal C}(B)_{i,j}=c_2$, and the sum over rectangles contributes a total of $+1$
to ${\rm inv}(B)$.

To summarize, terms contributing $+1$ to ${\rm Inv}(B)$ may be associated bijectively to the 
$a_1$ and $c_2$ configurations in ${\mathcal C}(B)$. 
So we have ${\rm Inv}(B)-\#(-1)_B=\#(a_1)$, the total number of $a_1$ vertices in ${\mathcal C}(B)$.
Moreover, the vertices equal to $a_1$ in ${\mathcal C}(B)$ correspond
bijectively to the entries $0$ of $B$, such that the first non-zero elements in 
$B$ in their row to the right and in their column below are both $1$.

We may repeat the above analysis with the matrix $\sigma(B)$, rotated by $-\pi/2$. What plays the role of $a_1$
after rotation are the vertices $b_1$ before rotation (see previous subsection), and $c_2$'s remain unchanged.
We deduce that ${\rm Inv}(\sigma(B))-\#(-1)_B=\#(b_1)$, the total number of $b_1$ vertices in ${\mathcal C}(B)$.
Moreover the vertices equal to $b_1$ in ${\mathcal C}(B)$  correspond
bijectively to the entries $0$ of $B$, such that the first non-zero elements in 
$B$ in their row to the right and in their column above are both $1$.

\begin{defn}
For each fixed diagonal $\delta_a(B)=\{b_{i,j}\, |\,  j-i=a\}$ for $2-n\leq a\leq n-2$, 
let us denote by ${\rm I}_a(B)$ the total number of entries $0$ 
such that the first non-zero entries to the right and below are both $1$. For each fixed anti-diagonal 
$\alpha_b=\{b_{i,j}\, |\,  i+j=b\}$ for $2-n\leq b\leq n-2$ let ${\rm I}'_b(B)$ denote the total number of 
entries $0$ such that the 
first non-zero entries to the right and above
are both $1$. Finally, let $\#(-1)_B(i,j)=\delta_{b_{i,j},-1}$.
\end{defn}

We have the following generalized Lambda-determinant expression:
\begin{thm}
The generalized Lambda-determinant of any $n\times n$ matrix $A$ reads:
$$ \vert A\vert_{\lambda,\mu}= \sum_{n\times n\, ASM\, B} \, 
\prod_{a=2-n}^{n-2} \lambda_a^{{\rm I}_a(B)} \,\mu_a^{{\rm I}'_a(B)}
\, \prod_{i,j=1}^n a_{i,j}^{b_{i,j}}\,  (\lambda_{j-i}+\mu_{i+j-n-1})^{\#(-1)_B(i,j)}$$
\end{thm}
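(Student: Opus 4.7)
My plan is to deduce the theorem directly from Theorem \ref{main}, reinterpreting the per-vertex weights $w_{i,j}(A,B;\lambda,\mu)$ purely in terms of the ASM $B$ rather than the 6V configuration ${\mathcal C}(B)$. The starting point is that the product $\prod_{i,j} w_{i,j}$ factors naturally into four pieces: a monomial factor $\prod_{i,j} a_{i,j}^{b_{i,j}}$, a factor $\lambda_{j-i}$ over each $a_1$ vertex, a factor $\mu_{i+j-n-1}$ over each $b_1$ vertex, and a factor $(\lambda_{j-i}+\mu_{i+j-n-1})$ over each $c_2$ vertex. The monomial factor is already expressed in ASM language, and the $c_2$ factor is as well, because by the 6V/ASM bijection the $c_2$ vertices are exactly the positions $(i,j)$ with $b_{i,j}=-1$, giving the factor $(\lambda_{j-i}+\mu_{i+j-n-1})^{\#(-1)_B(i,j)}$ with product over all $(i,j)$ (the exponent being $0$ whenever $b_{i,j}\neq -1$).

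Next I would collect the $a_1$-contribution and the $b_1$-contribution according to the value of the diagonal (resp. antidiagonal) index. For $a_1$ this gives
\[
\prod_{i,j:\, {\mathcal C}(B)_{i,j}=a_1} \lambda_{j-i}
= \prod_{a=2-n}^{n-2} \lambda_a^{N_a(B)},
\qquad N_a(B)=\#\{(i,j):\ j-i=a,\ {\mathcal C}(B)_{i,j}=a_1\},
\]
and analogously for $b_1$ we get $\prod_{a} \mu_a^{N'_a(B)}$ with $N'_a(B)$ the number of $b_1$ vertices on antidiagonal $i+j-n-1=a$. So the whole identity reduces to proving $N_a(B)=I_a(B)$ and $N'_a(B)=I'_a(B)$.

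The equality $N_a=I_a$ is precisely the bijection recalled just before the statement: under the row/column orientation rule that converts an ASM into the 6V-DWBC configuration, a $0$-entry $b_{i,k}=0$ gives a vertex of type $a_1$ exactly when the first non-zero entry to its right on the row and the first non-zero entry below on the column are both $+1$. I would make this explicit by inspecting the four cases $(h,v)\in\{\pm1\}^2$ for the signs of these two nearest non-zero neighbors, reproducing the case analysis already given in the preceding subsection (the same analysis used to interpret ${\rm Inv}(B)$). The equality $N'_a=I'_a$ follows by applying the same argument to $\sigma(B)$, since under $\sigma$ the $a_1$ and $b_1$ vertex types are interchanged (via the table \eqref{transfos}), the diagonal $j-i=a$ becomes an antidiagonal of the rotated configuration, and the "below" direction becomes "above".

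The main obstacle is cleanly identifying $N_a(B)$ with $I_a(B)$ on the nose, i.e.\ making sure the bijection between $a_1$ vertices and eligible zero entries described in the text preserves the diagonal class $j-i=a$ (which is automatic, since the bijection is position-preserving), and then verifying the boundary cases $a=\pm(n-1)$ contribute nothing on either side (there is no $a_1$ or $b_1$ with such extreme diagonal index because DWBC force the extreme NW/NE/SW/SE corners, and no zero entry on the extreme diagonals admits a non-zero neighbor in both required directions). Once these checks are in place, substituting the identifications $N_a=I_a$, $N'_a=I'_a$ into the expansion of $\prod w_{i,j}$ and summing over ASMs $B$ yields the claimed formula.
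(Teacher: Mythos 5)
Your proposal is correct and follows essentially the same route as the paper: starting from Theorem \ref{main}, it identifies the $a_1$ (resp.\ $b_1$) vertices of ${\mathcal C}(B)$ with the zero entries whose first non-zero neighbors to the right and below (resp.\ above) are both $+1$, via the same $(h,v)$ case analysis, and handles the antidiagonal count by the rotation $\sigma$. The only cosmetic difference is that the paper embeds this identification in its discussion of ${\rm Inv}(B)$, whereas you state it directly as $N_a(B)=I_a(B)$ and $N'_a(B)=I'_a(B)$.
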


\subsection{The general solution of the $T$-system with coefficients}

In this paper, we have concentrated on special initial conditions for the $T$-system \eqref{inhomtsys},
namely such that $T_{i,j,0}=1$ and $T_{i,j,1}=a_{\frac{j-i+n+1}{2},\frac{i+j+n+1}{2}}$. Starting from 
the general $T$-system solution of Theorem \ref{soltsys}, where {\it both} $T_{i,j,0}$ and $T_{i,j,1}$
are assigned arbitrary initial values $t_{i,j}=T_{i,j,\epsilon_{i,j,n}}$, we may repeat the steps we took to modify
the initial network $G_{min}(n)$ into a subset of the directed triangular lattice. The resulting network
(which we still denote by ${\hat G}_{min}(n)$, with associated minor ${\hat a}_{min}(n)$ 
by a slight abuse of notation) has the following edge weights
around each up-pointing triangle:
\begin{equation}\label{Trule} 
\raisebox{-1.8cm}{\hbox{\epsfxsize=8.cm \epsfbox{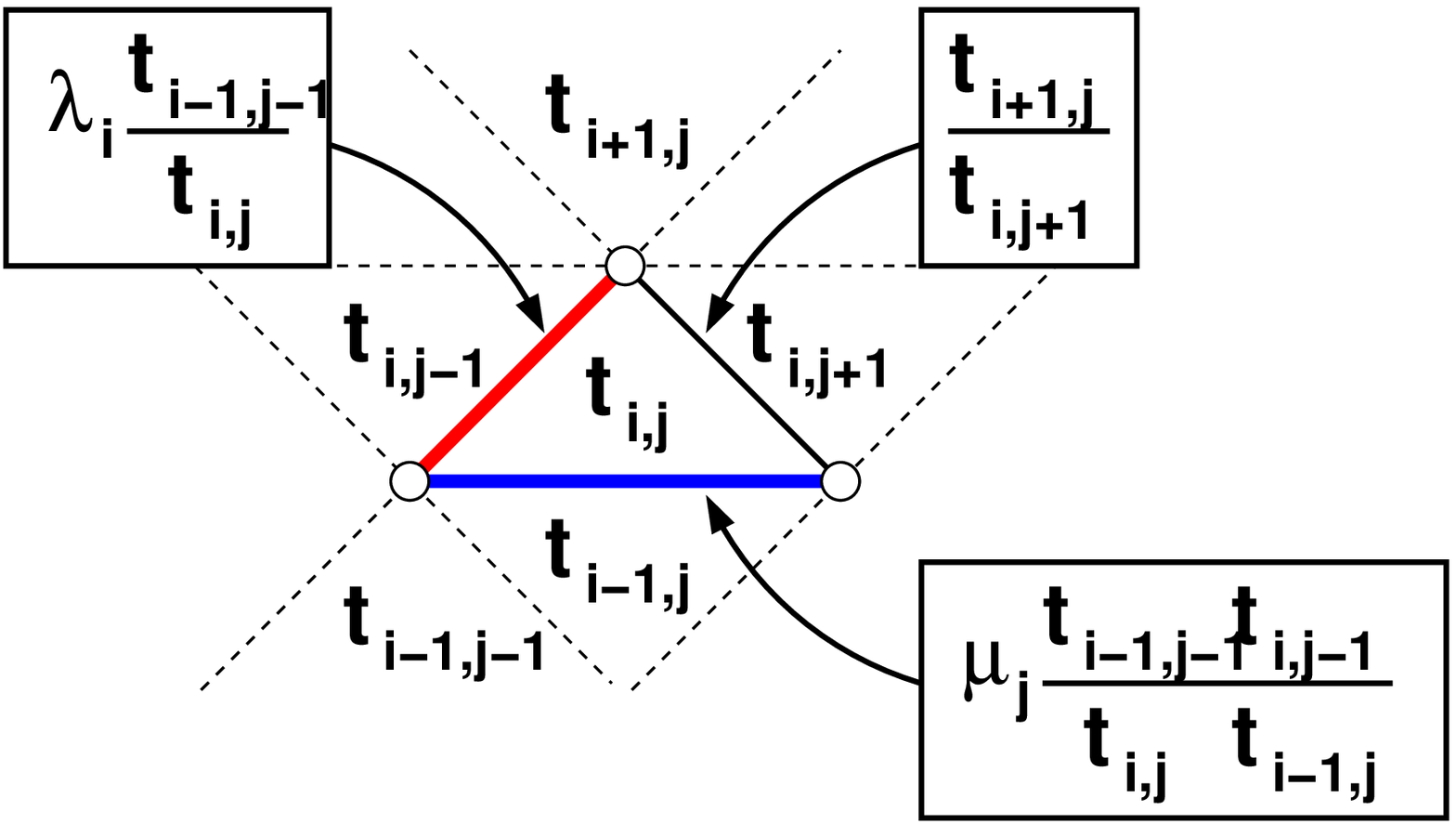}}}
\end{equation}
and we have
$$ T_{0,0,n}={\hat a}_{min}(n)\, \prod_{j=1}^n t_{j-n,j-1} =a'_{min}(n)$$
where the last transformation to $G'_{min}(n)$ consists in absorbing the $t_{j-n,j-1}$ in the SE horizontal and diagonal
edge weights adjacent to the endpoints of the paths. Noting that the initial values $T_{i,j,1}$ are the
face labels of the down-pointing triangles, while the $T_{i,j,0}$ are those of the up-pointing triangles,
we may summarize the dependence of the edge weights on the face labels of the down-pointing and
up-pointing triangles as follows:
$$ \raisebox{-1.8cm}{\hbox{\epsfxsize=10.cm \epsfbox{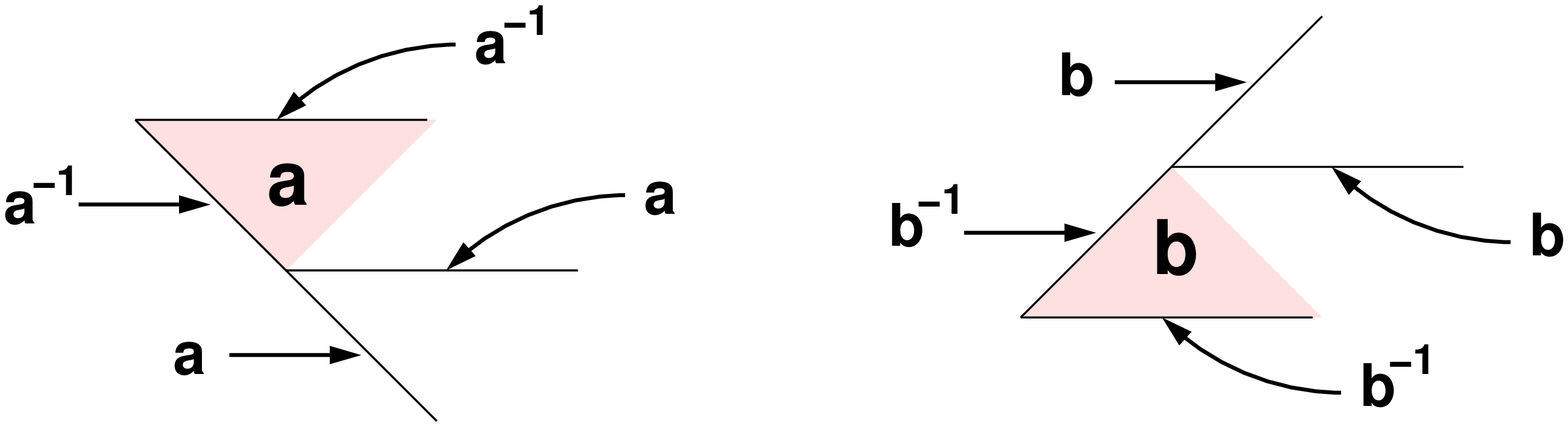}}}  $$
where $a,b=t_{i,j}$ for some $i,j$. Combining these the weights of down-pointing triangles \eqref{fintable}, 
we note that the weights acquire a simpler form if we transfer the $\lambda$ and $\mu$ parameters from the 
weights of down-pointing triangles to those of up-pointing ones, resulting in:
\begin{equation}\label{triconf} \raisebox{-1.8cm}{\hbox{\epsfxsize=14.cm \epsfbox{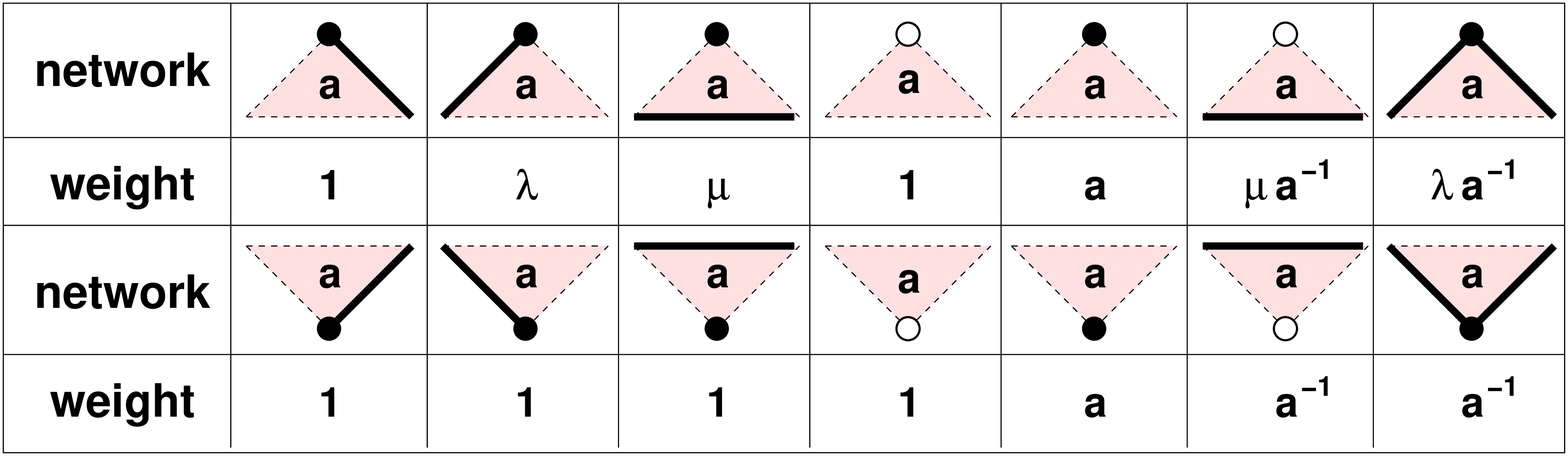}}}  
\end{equation}
where for $a=t_{i,j}$, we have $\lambda=\lambda_i$ and $\mu=\mu_{j}$.

\begin{remark}
We have associated a 6V-DWBC configuration of size $n$ to each configuration of $n$ non-intersecting paths
on $G'_{min}(n)$ from SW to SE, by the local rules \eqref{bijsixv} associating 6V vertices to each down-pointing
triangle configuration. We could have done the same for up-pointing triangles as well, by using the same 
rules on the reflected up-pointing triangles w.r.t. their horizontal edge. More precisely, this would yield a 6V-DWBC'
configuration of size $n-1$ where the prime refers to the fact that the boundary condition has opposite convention
(horizontal external edges point out of the grid, while vertical ones point into the grid). Flipping all edge orientations,
this becomes a 6V-DWBC configuration of size $n-1$. So each configuration of non-intersecting paths on $G'_{min}(n)$
that contribute to $a'_{min}(n)$ give rise naturally to a pair $(B,C)$ of ASMs of respective sizes $n$ and $n-1$.
Such pairs are called compatible pairs in the literature. However, these pairs do not play any role in our fully
inhomogeneous $T$-system solution, as weights are attached only to such compatible pairs and cannot be
disentangled nicely. In particular there is no resummation of the non-intersecting lattice path weights into weights
only pertaining to the ASMs, as opposed to the case where $T_{i,j,0}=1$ of Theorem \ref{main}, 
where the second ASM plays no role.
\end{remark}

\begin{example}
We consider $n=3$. We have the initial data assignments (represented in the $(i,j)$ plane):
$$\begin{matrix}
& & T_{2,0,1}=t_{2,0} & & \\
& T_{1,-1,1}=t_{1,-1} & T_{1,0,0}=t_{1,0} & T_{1,1,1}=t_{1,1} & \\
T_{0,-2,1}=t_{0,-2} & T_{0,-1,0}=t_{0,-1} & T_{0,0,1}=t_{0,0}& T_{0,1,0}=t_{0,1}& T_{0,2,1}=t_{0,2}\\
& T_{-1,-1,1}=t_{-1,-1} & T_{-1,0,0}=t_{-1,0} & T_{-1,1,1}=t_{-1,1} & \\
& & T_{-2,0,1}=t_{-2,0} & & 
\end{matrix}
$$
There are 8 distinct configurations of the 3 non-intersecting paths on $G'_{min}(3)$, contributing to $a'_{min}(3)$
(we have indicated the by small squares the centers of the $n^2=9$ down-pointing triangles 
and of the $(n-1)^2=4$ up-pointing ones, corresponding to labels $t_{i,j}$ with $(i,j)\in D_{0,0,3}$, on the first configuration (a)):
$$  \raisebox{-1.cm}{\hbox{\epsfxsize=16.cm \epsfbox{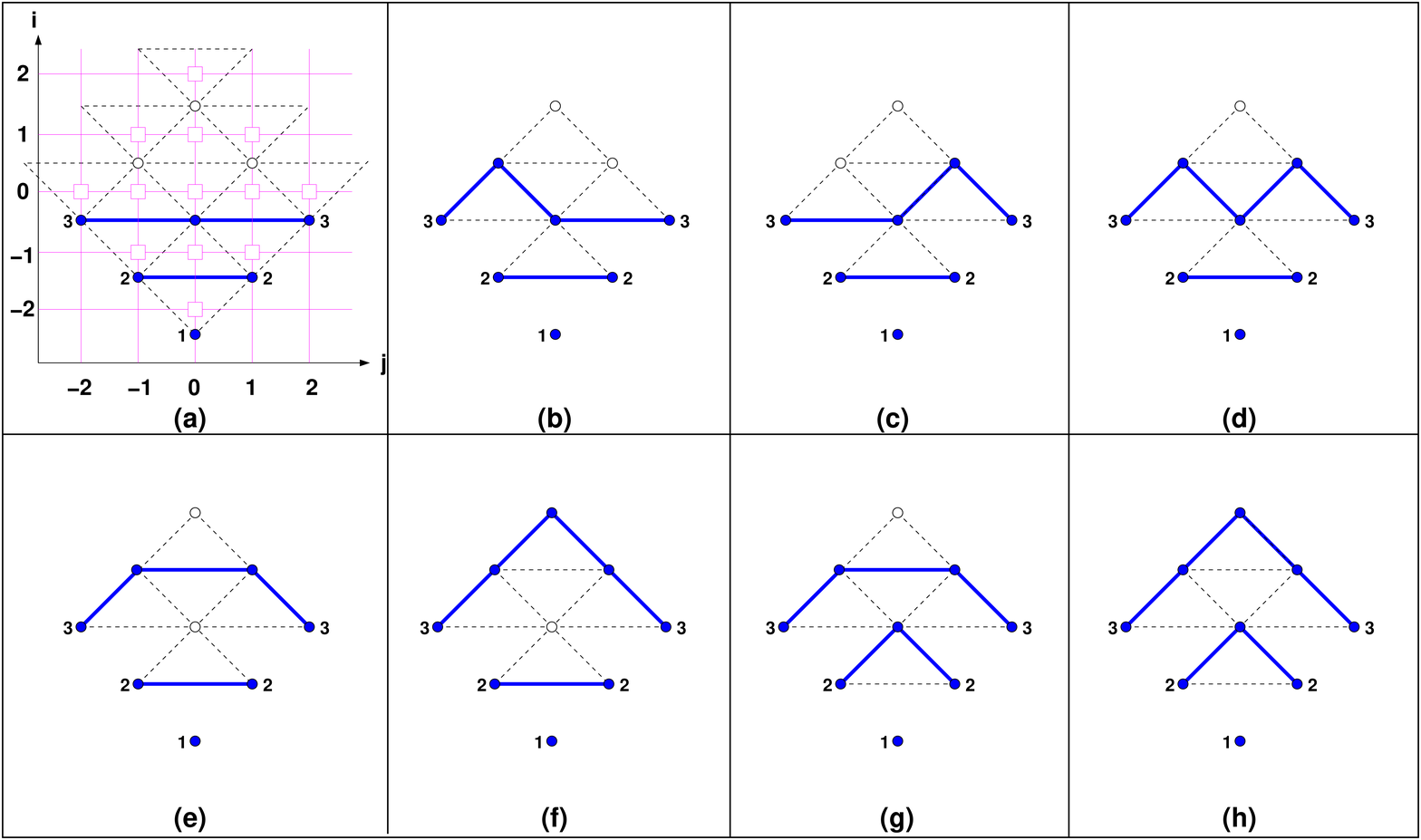}}} $$
The corresponding weights read:
$$\begin{matrix}
w_{(a)}=\mu_{-1}\mu_0\mu_1 \, \frac{t_{0,-2},t_{0,0},t_{0,2}}{t_{0,-1}t_{0,1}}& 
w_{(b)}=\lambda_0\mu_0\mu_1\, \frac{t_{-1,-1}t_{1,-1} t_{0,2}}{t_{0,-1}t_{0,1}} & 
w_{(c)}=\mu_{-1}\mu_0\lambda_0\,  \frac{t_{0,-1}t_{-1,1}t_{1,1}}{t_{0,-1}t_{0,1}}  \\
\, \, \, w_{(d)}=\lambda_0^2\mu_0  \,\frac{t_{-1,-1}t_{-1,1}t_{1,-1}t_{1,1}}{t_{0,-1}t_{0,1}t_{0,0}} & 
\, \, \, \,w_{(e)}=\lambda_0\mu_0^2\, \frac{t_{-1,-1}t_{-1,1}t_{1,-1}t_{1,1}}{t_{-1,0}t_{1,0}t_{0,0}}
& w_{(f)}=\mu_0\lambda_0\lambda_1\,  \frac{t_{-1,-1}t_{-1,1} t_{2,0}}{t_{-1,0}t_{1,0}}  \\
w_{(g)}=\lambda_{-1}\lambda_0\mu_0\,  \frac{t_{1,-1}t_{1,1} t_{-2,0}}{t_{-1,0}t_{1,0}}& 
\, \, \, \, \,w_{(h)}= \lambda_{-1}\lambda_0\lambda_1\, \frac{t_{-1,-1}t_{-1,1} t_{-2,0}}{t_{-1,0}t_{1,0}} &
\end{matrix} $$
The corresponding solution of the $T$-system is:
\begin{eqnarray*} T_{0,0,3}&=&\mu_{-1}\mu_0\mu_1 \, \frac{t_{0,-2},t_{0,0},t_{0,2}}{t_{0,-1}t_{0,1}}
+\lambda_0\mu_0\mu_1\, \frac{t_{-1,-1}t_{1,-1} t_{0,2}}{t_{0,-1}t_{0,1}}
+\mu_{-1}\mu_0\lambda_0\,  \frac{t_{0,-1}t_{-1,1}t_{1,1}}{t_{0,-1}t_{0,1}}\\
&&+\lambda_0^2\mu_0  \,\frac{t_{-1,-1}t_{-1,1}t_{1,-1}t_{1,1}}{t_{0,-1}t_{0,1}t_{0,0}}
+\lambda_0\mu_0^2\, \frac{t_{-1,-1}t_{-1,1}t_{1,-1}t_{1,1}}{t_{-1,0}t_{1,0}t_{0,0}}
+\mu_0\lambda_0\lambda_1\,  \frac{t_{-1,-1}t_{-1,1} t_{2,0}}{t_{-1,0}t_{1,0}}\\
&&+\lambda_{-1}\lambda_0\mu_0\,  \frac{t_{1,-1}t_{1,1} t_{-2,0}}{t_{-1,0}t_{1,0}}
+\lambda_{-1}\lambda_0\lambda_1\, \frac{t_{-1,-1}t_{-1,1} t_{-2,0}}{t_{-1,0}t_{1,0}}
\end{eqnarray*}
If we take  $t_{\pm 1,0}=t_{0,\pm 1}=1$, we recover the formula of Theorem \ref{main}:
\begin{eqnarray*} T_{0,0,3}&=& \mu_{-1}\mu_0\mu_1 \, {t_{0,-2},t_{0,0},t_{0,2}}
+\lambda_0\mu_0\mu_1\, {t_{-1,-1}t_{1,-1} t_{0,2}}
+\mu_{-1}\mu_0\lambda_0\, {t_{0,-1}t_{-1,1}t_{1,1}}\\
&&+\mu_0\lambda_0\lambda_1\, {t_{-1,-1}t_{-1,1} t_{2,0}}
+\lambda_{-1}\lambda_0\mu_0\, {t_{1,-1}t_{1,1} t_{-2,0}}
+\lambda_{-1}\lambda_0\lambda_1\, {t_{-1,-1}t_{-1,1} t_{-2,0}}\\
&&+\lambda_0\mu_0(\lambda_0+\mu_0)  \,\frac{t_{-1,-1}t_{-1,1}t_{1,-1}t_{1,1}}{t_{0,0}}\\
\end{eqnarray*}
where the sum extends over the 7 ASM of size 3.
Finally, for $t_{i,j}=1$ for all $i,j$ and $\lambda_i=\mu_i=q^i$ we have:
$$ T_{0,0,3} =4+2(q+q^{-1})=  (1+q^{-1})(1+q^0)(1+q^1) $$
in agreement with Theorem \ref{inladet} for $n=3$.
\end{example}

\subsection{Conclusion and perspectives}

A number of questions remain to be answered concerning this new deformation of the determinant.

The partition function of the 6V-DWBC model at its free fermion point $\Delta=0$, $q=e^{i\pi/2}$, corresponds to
the $2$-enumeration of ASM, with a weight $2$ per entry $-1$, and matches the Lambda-determinant, with all 
$\lambda_a=\mu_b=1$. However, the 6V-DWBC model is known to have a non-trivial deformation involving $2n$
spectral parameters say $z_i$ per row $i$ and $w_j$ per column $j$ of the square lattice $n\times n$ grid.
Our generalized Lambda-determinant introduces $2n-2$ other deformation parameters, which are attached
to diagonals and anti-diagonals of the same square grid. It would be interesting to mix the two deformations
and see if we can obtain some new information on ASMs by the process.

Another question concerns the {\it limit shape}. It is known that the 6V-DWBC model at its free fermion point is also
in bijection with the domino tilings of the Aztec diamond \cite{EKLP}, for which an arctic circle theorem exists,
namely there exists a limiting curve (a circle) separating frozen tiling phases from entropic tiling in the limit 
of large grid and small mesh size. It is easy to work out the homogeneous deformation of this curve
when $\lambda_a=\lambda$ and $\mu_b=\mu$ for all $a,b$. The simplest way of computing the artic curve 
in our model (in the case of a matrix $A$ with entries $a_{i,j}=1$) is
by analyzing the singularities of the generating function of the {\it density} $\rho_{i,j,k}$ defined as
$$ \rho_{i,j,k}=\frac{\partial {\rm Log}\, T_{i,j,k}}{\partial t_{0,0}}\Big\vert_{t_{i,j}=1;i,j\in\Z}$$
where $T_{i,j,k}$ is the solution of the T-system \eqref{inhomtsys} with initial data \eqref{tinit}.
Defining $\rho(X,Y,Z)=\sum_{k\geq 1;i,j\in\Z} \rho_{i,j,k}X^iY^jZ^k$ we find, by differentiating the T-system relation
and evaluating the result at $t_{i,j}=1$:
$$ \rho(X,Y,Z)=\frac{Z}{(1+Z^2-Z(\frac{\lambda}{\lambda+\mu}(X+X^{-1})+\frac{\mu}{\lambda+\mu}(Y+Y^{-1}))} $$
Following \cite{PW}, we find that 
in the coordinates $(x,y)$ obtained as
limit of $(\frac{i}{k},\frac{j}{k})$, for $(i,j)\in D_{i,j,k}$, of a $k\times k$ 6V-DWBC grid rotated by $+\pi/4$, 
the singular curve is the following ellipse:
$$ x^2\left(1+\frac{\lambda}{\mu}\right) + y^2 \left(1+\frac{\mu}{\lambda}\right) =1 $$
It would be interesting to see how this is affected by inhomogeneous parameters $\lambda_a, \mu_a$.
In the particular example $\lambda_i=\mu_i=q^i$ of Section \ref{secq}, we find the following equation
for $\rho(X,Y,Z)$:
$$ (1+Z^2-X-X^{-1})\rho(X,Y,Z)+(1+Z^2-qY-q^{-1}Y^{-1})\rho(q^{-1}X,qY,Z)=2 Z$$
For generic $q$ (not a root of unity) the function $\rho(X,Y,Z)$ is not algebraic, and may be
defined as the following limit:
$$\rho(X,Y,Z)=\frac{2Z}{(1-XZ)(1-X^{-1}Z)} \lim_{t\to 1^-} \sum_{k=0}^{\infty} (-t)^k \prod_{m=1}^{k} 
\frac{(1-q^mYZ)(1-q^{-m}Y^{-1}Z)}{(1-q^{-m}XZ)(1-q^{m}X^{-1}Z)} $$
It would be interesting to investigate its singularity structure.

Finally, the connection to cluster algebra should also allow us to consider quantum versions of the 
T-system, via the quantum cluster algebra construction of \cite {BZ}. A first step in this direction was
taken in \cite{DFK11} for the case of the $A_1$ quantum T-system. Presumably, such quantum deformations 
should yield new non-trivial deformations of the Lambda-determinant and hopefully enrich our understanding
of ASMs.




\begin{thebibliography}{10}






  





\bibitem{BZ}A. Berenstein, A. Zelevinsky, \emph{Quantum Cluster Algebras},
Adv. Math. {\bf 195} (2005) 405--455. 
{\tt arXiv:math/0404446 [math.QA]}.


\bibitem{Bressoud} D. Bressoud, {\em Proofs and confirmations: 
The story of the alternating sign matrix conjecture},
MAA Spectrum,  Mathematical Association of America,
Washington, DC (1999), 274 pages.
  

    

\bibitem{DF} P. Di Francesco, {\em The solution of the $A_r$ T-system for arbitrary boundary},
Elec. Jour. of Comb. Vol. {\bf 17(1)} (2010) R89. 
{\tt arXiv:1002.4427 [math.CO]}.

%
%
\bibitem{DFK08} P. Di Francesco and R. Kedem, \emph{Q-systems as
    cluster algebras II}. {\tt
    arXiv:0803.0362 [math.RT]}.
%
%

%
%

\bibitem{DFK11}
P. Di Francesco and R. Kedem, {\em 
The solution of the quantum $A_1$ T-system for arbitrary boundary},
Comm. Math. Phys.
{\bf 313}, No 2, (2012) 329--350.
{\tt  arXiv:1102.5552 [math-ph]}.




\bibitem{DFK12}P. Di Francesco and R. Kedem, {\em T-systems with boundaries 
from network solutions}, preprint
{\tt arXiv:1208.4333 [math.CO]}.


\bibitem{DOD} C. Dodgson, {\em Condensation of determinants}, Proceedings
of the Royal Soc. of London {\bf 15} (1866) 150--155.


\bibitem{EKLP}N. Elkies, G. Kuperberg, M. Larsen and J. Propp, {\em
Alternating-Sign Matrices and Domino Tilings (Parts I and II)}
Jour. of Alg. Comb. Vol 1, No 2 (1992), 111-132 and Vol 1, No 3 (1992), 219-234.

%


\bibitem{FZI} S. Fomin and A. Zelevinsky Cluster Algebras I.
  J. Amer. Math. Soc.   \textbf{15}  (2002),  no. 2, 497--529 {\tt
    arXiv:math/0104151 [math.RT]}.

  

%
%





\bibitem{KUP} G. Kuperberg, {\em Another proof of the alternating-sign matrix conjecture},
Int. Math. Res. Notices No {\bf 3} (1996) 139--150.
{\tt    arXiv:math/9712207}.


	

%
%

	




	    


%
%
\bibitem{LGV1} B. Lindstr\"om, \emph{On the vector representations of
induced matroids}, Bull. London Math. Soc. \textbf{5} (1973)
85--90.
	
\bibitem{LGV2} I. M. Gessel and X. Viennot, \emph{Binomial
determinants, paths and hook formulae}, Adv. Math. \textbf{58}
(1985) 300--321.  


	
  
	


\bibitem{PW} R. Pemantle and M.C. Wilson, 
{\em Asymptotics of multivariate sequences. I. Smooth points of the singular variety}, 
J. Combin. Theory Ser. A, {\bf 97}(1) (2002) 129--161; 
{\em Asymptotics of multivariate sequences, II. Multiple points of the singular variety}, 
Combin. Probab. Comput. {\bf 13} (2004) 735--761; {\em Twenty combinatorial examples 
of asymptotics derived from multivariate generating functions}, SIAM Review, {\bf 50} (2008) 199--272.
%

\bibitem{RR} D. Robbins and H. Rumsey, \emph{Determinants and
    Alternating Sign Matrices}, 
             Advances in Math. \textbf{62} (1986) 169-184.


\bibitem{SPY} D. Speyer, \emph{Perfect matchings and the
        octahedron recurrence}, J. Algebraic Comb. \textbf{25} No 3
      (2007) 309-348. {\tt arXiv:math/0402452 [math.CO]}.




%
%






  




	

\end{thebibliography}
\end{document}